\documentclass[11pt]{amsart}

\usepackage{amsmath, amssymb, amsthm, setspace, tikz-cd, enumitem}
\setlength{\textwidth}{15cm}
\setlength{\oddsidemargin}{2mm}
\setlength{\evensidemargin}{2mm}

\theoremstyle{plain}
\newtheorem{theorem}{Theorem}[section]
\newtheorem{lemma}[theorem]{Lemma}
\newtheorem{proposition}[theorem]{Proposition}
\newtheorem{corollary}[theorem]{Corollary}

\newtheorem*{theoremA}{Theorem A}
\newtheorem*{theoremB}{Theorem B}
\newtheorem*{theoremC}{Theorem C}
\newtheorem*{theoremD}{Theorem D}

\theoremstyle{definition}
\newtheorem{definition}[theorem]{Definition}
\newtheorem{exampletemp}[theorem]{Example}

\overfullrule=5pt

\begin{document}

\title{Subalgebras of simple AF-algebras}
\author{Christopher Schafhauser}
\address{Department of Mathematics, University of Nebraska -- Lincoln, Lincoln, NE 68588}
\email{cschafhauser2@unl.ca}
\subjclass[2010]{Primary: 46L05}
\keywords{AF-Embedding, Amenable trace}
\date{\today}
\begin{abstract}
  It is shown that if $A$ is a separable, exact C$^*$-algebra which satisfies the Universal Coefficient Theorem (UCT) and has a faithful, amenable trace, then $A$ admits a trace-preserving embedding into a simple, unital AF-algebra with a unique trace.  Modulo the UCT, this provides an abstract characterization of C$^*$-subalgebras of simple, unital AF-algebras.

  As a consequence, for a countable, discrete, amenable group $G$ acting on a second countable, locally compact, Hausdorff space $X$, $C_0(X) \rtimes_r G$ embeds into a simple, unital AF-algebra if, and only if, $X$ admits a faithful, invariant, Borel, probability measure.  Also, for any countable, discrete, amenable group $G$, the reduced group C$^*$-algebra $\mathrm{C}^*_r(G)$ admits a trace-preserving embedding into the universal UHF-algebra.
\end{abstract}

\maketitle

\section*{Introduction}

It follows from the work of Murray and von Neumann in \cite{MvN:RingsOfOperatorsIV} that there is a unique separably acting, hyperfinite $\mathrm{II}_1$-factor $\mathcal{R}$ and that any separably acting, hyperfinite, tracial von Neumann algebra admits a trace-preserving embedding into $\mathcal{R}$.  A fundamental result of Connes in \cite{Connes:InjectiveFactors} characterizes hyperfinite von Neumann algebras abstractly by showing the equivalence of hyperfiniteness and injectivity among von Neumann algebras.  Together, these results show that a separably acting, finite von Neumann algebra embeds into $\mathcal{R}$ if, and only if, it is hyperfinite, a result which has an instrumental role in the theory of subfactors initiated by Jones in \cite{Jones:Subfactors}.

A C$^*$-algebraic analogue of the Murray-von Neumann classification theorem is given by Elliott's celebrated result in \cite{Elliott:AF} classifying approximately finite-dimensional (AF) C$^*$-algebras and $^*$-homomorphisms between such algebras in terms of the non-stable $K_0$-group.  The problem of finding abstract characterizations of AF-algebras and C$^*$-subalgebras of AF-algebras was posed by Effros in \cite{Effros:Problems} with the latter problem motivated in part by the AF-embedding of the irrational rotation algebras of Pimsner and Voiculescu in \cite{PimsnerVoiculescu} which led to the classification of such C$^*$-algebras in terms of the angle of rotation.  Although an abstract characterization of AF-algebras among all C$^*$-algebras seems out of reach, such a characterization is possible among simple, unital C$^*$-algebras due to the remarkable success of Elliott's classification programme for simple, nuclear C$^*$-algebras over the last several decades with the final steps taken in \cite{GongLinNiu}, \cite{ElliottGongLinNiu}, and \cite{TWW:Annals}.

The problem of characterizing C$^*$-subalgebras of AF-algebras has received much attention over the last several decades with the standing conjecture being that a C$^*$-algebra embeds into an AF-algebra if, and only if, it is separable, exact, and quasidiagonal (see Section 7 of \cite{BlackadarKirchberg:MFAlgebras}).  Shortly after the Pimsner-Voiculescu AF-embedding result of \cite{PimsnerVoiculescu}, \mbox{Pimsner} characterized in \cite{Pimsner:AFE} which C$^*$-algebras of the form $C(X) \rtimes \mathbb{Z}$ can be embedded into an AF-algebra in terms of the underlying action of $\mathbb{Z}$ on $X$.  Similar results were obtained by Brown for crossed products of AF-algebras by $\mathbb{Z}$ in \cite{Brown:AFE} and for crossed products of UHF-algebras by $\mathbb{Z}^k$ in \cite{Brown:UHF}.  Many other partial results along these lines have appeared in \cite{Katsura:AFECuntzFlows}, \cite{Lin:AFEmbAHalg}, and \cite{Matui:AFE} for example.  The latter result in \cite{Matui:AFE} also has an important role in the work of Ozawa, R{\o}rdam, and Sato in \cite{OzawaRordamSato} showing that the C$^*$-algebra of an elementary amenable group embeds into an AF-algebra, a result which was later extended to all countable, discrete, amenable groups in \cite{TWW:Annals}.

Aside from crossed products, it is known that all residually stably finite, type I C$^*$-algebras embed into AF-algebras \cite{Spielberg:AFE}, separable, exact, residually finite-dimensional C$^*$-algebras satisfying the UCT embed into AF-algebras \cite{Dadarlat:TopKK}, and the cone over any separable, exact C$^*$-algebra embeds into an AF-algebra \cite{Ozawa:Cone}.  Also, combining Ozawa's result in \cite{Ozawa:Cone} with the techniques introduced by Spielberg in \cite{Spielberg:AFE}, Dadarlat has obtained AF-embeddings of continuous fields of C$^*$-algebras in \cite{Dadarlat:AFEmbContField} provided the base space is sufficiently connected and at least one fibre is AF-embeddable.  See Chapter 8 of \cite{BrownOzawa} for a well-written survey of the AF-embedding problem for C$^*$-algebras.

Despite the remarkable work on the AF-embedding problem given in the results above, in each of these results, the methods used are very specific to the class of C$^*$-algebras under consideration and shed very little light on the abstract AF-embedding problem.  This paper introduces a systematic method for producing embeddings into certain simple, unital AF-algebras.

It is well known that any C$^*$-subalgebra of a simple, unital AF-algebra must be separable and exact and must admit a faithful, amenable trace.  Modulo the Universal Coefficient Theorem (UCT) of \cite{RosenbergSchochet}, the present paper shows these are the only obstructions.  There are no known counterexamples to the UCT among separable, exact C$^*$-algebras with a faithful, amenable trace, and it is an important open problem whether all separable, nuclear C$^*$-algebras satisfy the UCT.

\begin{theoremA}
If $A$ is a separable, exact $C^*$-algebra which satisfies the UCT and admits a faithful, amenable trace, then there is a simple, unital AF-algebra $B$ with a unique trace and a trace-preserving embedding $A \hookrightarrow B$.
\end{theoremA}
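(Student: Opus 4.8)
The plan is to reduce the construction to two largely independent ingredients: an \emph{existence} statement extracting from the amenable trace a sequence of finite-dimensional matrix approximations that are simultaneously almost multiplicative and almost trace-preserving, and an \emph{intertwining} argument upgrading these approximations to an honest $*$-monomorphism into a carefully chosen simple AF-algebra. First I would unitize, replacing $A$ by $\widetilde{A}$ and extending $\tau$ to a faithful trace, a routine reduction since a (possibly non-unital) C$^*$-subalgebra of a simple unital AF-algebra is again such a subalgebra; thus I may assume $A$ is separable, exact, unital, satisfies the UCT, and carries a faithful amenable trace $\tau$. The technical heart is the assertion that $\tau$ is a \emph{quasidiagonal} trace: there exist unital completely positive maps $\phi_n \colon A \to M_{k(n)}(\mathbb{C})$ with
\[
  \|\phi_n(ab) - \phi_n(a)\phi_n(b)\| \longrightarrow 0
  \qquad\text{and}\qquad
  \big|\operatorname{tr}_{k(n)}(\phi_n(a)) - \tau(a)\big| \longrightarrow 0
\]
for all $a,b \in A$, where $\operatorname{tr}_{k(n)}$ is the normalized trace on $M_{k(n)}(\mathbb{C})$.

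To produce these maps I would begin from amenability of $\tau$ itself, which already furnishes unital completely positive maps into matrix algebras that are almost multiplicative in the trace (Hilbert--Schmidt) norm $\|x\|_2 = \tau(x^*x)^{1/2}$ and asymptotically trace-preserving; equivalently, faithfulness together with amenability yields a trace-preserving embedding $A \hookrightarrow \mathcal{R}^\omega$ into the tracial ultrapower of the hyperfinite $\mathrm{II}_1$-factor $\mathcal{R}$. The real work is to upgrade $2$-norm almost-multiplicativity to the operator-norm almost-multiplicativity demanded above, that is, to lift the embedding into $\mathcal{R}^\omega$ to an honest almost-multiplicative sequence in the C$^*$-ultraproduct $\prod_n M_{k(n)}/\bigoplus_n M_{k(n)}$. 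This lifting is obstructed by a class in $KK$-theory --- equivalently, via the UCT, by a $K$-theoretic class --- and it is precisely the UCT that converts the vanishing of this obstruction into an actual lift, while exactness of $A$ keeps the relevant $K$-groups under control in the non-nuclear setting.

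With the maps $\phi_n$ in hand, I would build the target algebra and the embedding simultaneously by an approximate intertwining in the spirit of Elliott's classification. One specifies a simple, unital AF-algebra $B$ in advance through its dimension group: choosing $K_0(B)$ to be a simple dimension group with a unique state --- for instance a dense subgroup of $\mathbb{R}$, as for the universal UHF-algebra --- forces $B$ to be simple, unital, and AF with a unique trace $\sigma$. The maps $\phi_n$ supply the \emph{existence} half of the classification data, namely almost trace-preserving $*$-homomorphisms out of finite-dimensional subalgebras of $A$ into $B$; a \emph{stable uniqueness} theorem, again powered by the UCT, guarantees that two such maps agreeing on $K$-theory and trace become approximately unitarily equivalent after amplification. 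Interleaving existence and uniqueness along an increasing sequence of finite subsets of $A$ then produces a genuine $*$-homomorphism $\varphi \colon A \to B$ with $\sigma \circ \varphi = \tau$, and since $\tau$ is faithful and $B$ is simple, $\varphi$ is automatically injective, yielding the desired trace-preserving embedding.

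I expect the principal obstacle to be the microstate existence step in the merely exact, rather than nuclear, setting: controlling the $K$-theoretic obstruction to lifting $A \hookrightarrow \mathcal{R}^\omega$ to a norm-microstate sequence, and doing so while retaining enough quantitative control of the trace to guarantee that the resulting inductive limit is simple with a \emph{unique} trace restricting to $\tau$. A secondary difficulty lies in the bookkeeping of the intertwining, where the existence and stable-uniqueness inputs must be interleaved so that successive approximations glue up to unitary equivalence without ever destroying injectivity or the tracial constraint; here too the UCT is indispensable, as it is what renders both statements computable in terms of $K$-theory.
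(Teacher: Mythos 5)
Your overall architecture --- quasidiagonality of the trace as the existence input, a uniqueness theorem for almost-multiplicative maps, and an Elliott-style intertwining --- is essentially the paper's (the paper packages the last two steps as Corollary \ref{cor:TheoremD} and feeds it the Tikuisis--White--Winter--Gabe theorem, Theorem \ref{thm:TWW}), but there is a genuine gap in how you choose the codomain. You propose to fix $B$ ``in advance'' through a dimension group such as ``a dense subgroup of $\mathbb{R}$, as for the universal UHF-algebra.'' This cannot work: if $\varphi : A \rightarrow B$ is trace-preserving and $p \in A \otimes \mathcal{K}$ is a projection, then
\[ (\tau_A \otimes \mathrm{Tr}_\mathcal{K})(p) = \hat{\tau}_B\bigl(K_0(\varphi)([p])\bigr) \in \hat{\tau}_B(K_0(B)), \]
so the trace values of projections over $A$ are forced to lie in the countable group $\hat{\tau}_B(K_0(B))$. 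For the irrational rotation algebra $A_\theta$ (separable, nuclear, UCT, unique faithful amenable trace, containing Rieffel projections of trace $\theta \notin \mathbb{Q}$) there is consequently no trace-preserving embedding into $\mathcal{Q}$, nor into any $B$ whose $K_0$-group misses $\theta$; concretely, your intertwining breaks at the uniqueness step, since consecutive approximations can never be made to agree on the $K_0$-classes of projections. The codomain must be tailored to $(A, \tau_A)$: the paper sets $G_0 = \mathrm{span}_\mathbb{Q}\{(\tau_A \otimes \mathrm{Tr}_\mathcal{K})(p)\} \subseteq \mathbb{R}$ and invokes Elliott's realization theorem to produce a simple, unital AF-algebra $B$ with unique trace and $K_0(B) \cong G_0$ as unital ordered groups. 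Taking the $\mathbb{Q}$-span serves a second purpose you also need: the classification input (Corollary \ref{cor:TheoremD}, resp.\ Theorem \ref{thm:Classification}) requires $K_0(B)$ divisible (resp.\ $B$ $\mathcal{Q}$-stable), which an arbitrary dense subgroup of $\mathbb{R}$ need not be.

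A second, related gap lies in your existence half of the intertwining. Quasidiagonality of $\tau_A$ gives unital completely positive maps $\phi_n$ that are almost multiplicative and \emph{almost} trace-preserving, but this does not pin down the classes $[\chi(\phi_n(p))] \in K_0(B)$: when $K_0(B)$ is dense in $\mathbb{R}$, classes with nearby traces are distinct and cannot be matched by unitary conjugation, so the hypothesis of your stable uniqueness theorem (exact agreement on the $K_0$-data of a finite set of projections) can never be verified for consecutive stages. What is needed is the stronger existence statement that the approximations can be chosen with \emph{exactly} prescribed $K_0$-data, $(\phi_n)_\#(p) = \sigma([p])$, for the homomorphism $\sigma : K_0(A) \rightarrow K_0(B)$ determined by the trace; this is the content of Lemma \ref{lemma:ApproximateExistence}, which rests on Proposition \ref{prop:UltraproductExistence}, where the $K$-theory of the TWW map is corrected by a trace-zero perturbation using Cuntz pairs over the trace-kernel extension, the pullback lemma, and Elliott--Kucerovsky absorption. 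The same machinery is what removes the amplification in your ``stable uniqueness after amplification'' (Proposition \ref{prop:UltraproductUniqueness}), a step you leave unaddressed and which is the paper's main technical contribution for merely exact $A$. Without these two repairs --- a trace-adapted, divisible dimension group for $B$, and exact $K_0$-control in the existence step --- the intertwining as you describe it does not close up.
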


Amenable traces were introduced by Connes in \cite{Connes:InjectiveFactors} in the von Neumann algebra setting and are characterized by the existence of almost multiplicative, almost trace-preserving, completely positive, contractive maps into $\mathbb{M}_n$, the algebra of $n \times n$ matrices over $\mathbb{C}$, where ``almost'' is measured in the 2-norm defined by the normalized trace on $\mathbb{M}_n$.  A key step in Connes's proof that injective $\mathrm{II}_1$-factors are hyperfinite consists of showing that the trace on an injective $\mathrm{II}_1$-factor is amenable.  In fact, the amenability of the trace characterizes injectivity of $\mathrm{II}_1$-factors.  Amenable traces were introduced in the C$^*$-algebraic setting by Kirchberg in \cite{Kirchberg:FactorizationProperty} and extensively developed by Brown in \cite{Brown:Memoirs}.

There is a very close connection between the theory of amenable traces on C$^*$-algebras and von Neumann algebras.  For example, if $A$ is an exact C$^*$-algebra, a trace $\tau_A$ on $A$ is amenable if, and only if, $\pi_{\tau_A}(A)''$ is hyperfinite.  Furthermore, if $\tau_A$ is faithful, then the GNS representation $\pi_{\tau_A} : A \rightarrow \pi_\tau(A)''$ is faithful, and hence any exact C$^*$-algebra $A$ with a faithful, amenable trace admits a trace-preserving embedding into a hyperfinite von Neumann algebra.  This observation, which can be viewed as a weak$^*$-version of Theorem A, is the starting point of the proof of the quasidiagonality theorem of Tikuisis, White, and Winter in \cite{TWW:Annals} and Gabe in \cite{Gabe:TWW} which, in turn, has an important role in the proof of Theorem A.

Building on the work of Ozawa, R{\o}rdam, and Sato in \cite{OzawaRordamSato}, it was shown in \cite{TWW:Annals} that if $G$ is a countable, discrete, amenable group, then C$^*_r(G)$ embeds into an AF-algebra, although these results give no control over the codomain AF-algebra.  Combining the techniques introduced in the present paper with the work of Higson and Kasparov \cite{HigsonKasparov:BaumConnes}, L\"uck \cite{Luck:TraceConjecture}, and Tu \cite{Tu:AmenableGroupoid} on the Baum-Connes conjecture yields a much sharper AF-embedding result showing group C$^*$-algebras embed into the universal UHF-algebra $\mathcal{Q} = \bigotimes_{n=1}^\infty \mathbb{M}_n$.

For a discrete group $G$, the group von Neumann algebra $L(G)$ and the reduced group C$^*$-algebra $\mathrm{C}^*_r(G)$ are equipped with the usual faithful trace given by $\sum_{g \in G} c_g \cdot g \mapsto c_e$ where $e \in G$ is the neutral element.

\begin{theoremB}
For a countable, discrete group $G$, the following are equivalent:
\begin{enumerate}
  \item $G$ is amenable;
  \item $L(G)$ admits a trace-preserving embedding into $\mathcal{R}$;
  \item $\mathrm{C}^*_r(G)$ admits a trace-preserving embedding into $\mathcal{Q}$.
\end{enumerate}
\end{theoremB}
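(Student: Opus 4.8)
The plan is to establish the cycle of implications $(1) \Rightarrow (3) \Rightarrow (2) \Rightarrow (1)$, with the substance of the theorem concentrated in the first implication, where Theorem A enters; the other two are soft consequences of classical von Neumann algebra theory. Throughout, $\tau$ denotes the canonical trace on $\mathrm{C}^*_r(G)$ or $L(G)$, and $\pi_\tau$ the associated GNS representation.

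\emph{Proof that $(1) \Rightarrow (3)$.} Assume $G$ is amenable. Since $G$ is countable, $\mathrm{C}^*_r(G) = \mathrm{C}^*(G)$ is separable, and amenability of $G$ makes it nuclear, hence exact. The canonical trace $\tau$ is faithful, and it is amenable: indeed $\pi_\tau(\mathrm{C}^*_r(G))'' = L(G)$ is hyperfinite by Connes's theorem, so amenability of $\tau$ follows from the characterization recalled in the introduction for exact C$^*$-algebras. That $\mathrm{C}^*_r(G)$ satisfies the UCT is a consequence of the Baum--Connes conjecture, which holds for $G$ by Higson--Kasparov since amenable groups are a-T-menable; concretely, this is Tu's theorem that the reduced C$^*$-algebra of an amenable groupoid lies in the bootstrap class. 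The hypotheses of Theorem A are thus met, giving a trace-preserving embedding of $\mathrm{C}^*_r(G)$ into some simple, unital AF-algebra with unique trace. To sharpen the codomain to $\mathcal{Q}$, I would track the invariant that distinguishes $\mathcal{Q}$: a trace-preserving unital embedding into $\mathcal{Q}$ forces $\tau_*(K_0(\mathrm{C}^*_r(G))) \subseteq \mathbb{Q}$, since $K_0(\mathcal{Q}) = \mathbb{Q}$ with $\tau_*$ the inclusion $\mathbb{Q} \hookrightarrow \mathbb{R}$. This necessary condition is supplied by L\"uck's trace conjecture, a further consequence of Baum--Connes, which bounds the image of $\tau_*$ inside the additive subgroup of $\mathbb{Q}$ generated by the inverses $1/|F|$ of orders of finite subgroups $F \leq G$. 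Feeding this rationality back into the construction underlying Theorem A---retaining the induced map on $K_0$ rather than discarding it---should force the target AF-algebra to be $\mathcal{Q}$.

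\emph{Proof that $(3) \Rightarrow (2)$ and $(2) \Rightarrow (1)$.} A trace-preserving embedding $\iota : \mathrm{C}^*_r(G) \hookrightarrow \mathcal{Q}$ induces an isometry of $L^2(\mathrm{C}^*_r(G),\tau)$ into $L^2(\mathcal{Q}, \tau_{\mathcal{Q}})$ intertwining the GNS representations, hence a normal, trace-preserving embedding of bicommutants. Since $\pi_\tau(\mathrm{C}^*_r(G))'' = L(G)$ and $\pi_{\tau_{\mathcal{Q}}}(\mathcal{Q})'' = \mathcal{R}$ (the GNS closure of the uniquely traced UHF-algebra $\mathcal{Q}$ being the hyperfinite $\mathrm{II}_1$-factor), this yields a trace-preserving embedding $L(G) \hookrightarrow \mathcal{R}$, proving $(3) \Rightarrow (2)$. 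Given such an embedding, the trace provides a normal conditional expectation $\mathcal{R} \to L(G)$; as $\mathcal{R}$ is injective and injectivity passes to the range of a conditional expectation, $L(G)$ is injective, hence hyperfinite, and by Connes's theorem $G$ is amenable. This gives $(2) \Rightarrow (1)$ and closes the cycle.

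The main obstacle is the first implication, and within it the upgrade from an arbitrary uniquely-traced simple AF-algebra to $\mathcal{Q}$ itself. Theorem A serves as a black box for the embedding, so the work lies in (i) assembling the three deep external inputs---nuclearity, the UCT via Higson--Kasparov and Tu, and the trace conjecture via L\"uck---and (ii) refining the proof of Theorem A so that the rationality of $\tau_*$ on $K_0$ is genuinely exploited to pin down the codomain, rather than merely producing some simple AF target. The equivalence $(1) \Leftrightarrow (2)$ and the reduction $(3) \Rightarrow (2)$ are routine by comparison.
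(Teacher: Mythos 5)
Your skeleton matches the paper's proof: the same cycle of implications, with $(1)\Leftrightarrow(2)$ and $(3)\Rightarrow(2)$ treated as classical von Neumann algebra facts (the paper dismisses these as well known, and your arguments for them are correct), and with $(1)\Rightarrow(3)$ resting on exactly the three external inputs the paper uses --- Higson--Kasparov for Baum--Connes, L\"uck for rationality of the trace on $K_0$, and Tu for the UCT. Where your proposal falls short is at its own center of gravity: the upgrade from ``some simple, unital AF-algebra with unique trace'' to $\mathcal{Q}$ itself is never carried out; you defer it to an unspecified ``refinement'' of the proof of Theorem A that ``should force'' the codomain to be $\mathcal{Q}$. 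As written this is a gap, and it is also an unnecessary detour, because no proof needs to be reopened: Theorem D (Corollary \ref{cor:TheoremD}), not Theorem A, is the correct black box, since its entire point is that the $K_0$-data of the embedding can be prescribed --- exactly the freedom you were asking for. L\"uck's theorem says that $\hat{\tau}_{\mathrm{C}^*_r(G)} : K_0(\mathrm{C}^*_r(G)) \rightarrow \mathbb{R}$ takes values in $\mathbb{Q} = K_0(\mathcal{Q})$, so corestricting it gives a group homomorphism $\sigma : K_0(\mathrm{C}^*_r(G)) \rightarrow K_0(\mathcal{Q})$ with $\hat{\tau}_\mathcal{Q} \sigma = \hat{\tau}_{\mathrm{C}^*_r(G)}$ (as $\hat{\tau}_\mathcal{Q}$ is the inclusion $\mathbb{Q} \hookrightarrow \mathbb{R}$) and $\sigma([1]) = [1_\mathcal{Q}]$. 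Since $\mathrm{C}^*_r(G)$ is separable, nuclear (hence exact, with its canonical faithful trace automatically amenable) and satisfies the UCT by Tu, the existence half of Corollary \ref{cor:TheoremD} with $B = \mathcal{Q}$ immediately produces a unital, trace-preserving embedding $\mathrm{C}^*_r(G) \hookrightarrow \mathcal{Q}$. This is precisely the paper's proof.

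That said, your plan can also be completed on its own terms, which shows the gap is small: in the proof of Theorem A the codomain is realized, via Elliott's range-of-the-invariant theorem, as the simple, unital AF-algebra with unique trace whose unital ordered $K_0$-group is $G_0 = \mathrm{span}_\mathbb{Q}\{(\tau \otimes \mathrm{Tr}_\mathcal{K})(p)\}$; by L\"uck, $G_0 = \mathbb{Q}$, and Elliott's classification of AF-algebras then identifies this codomain with $\mathcal{Q}$. One further nitpick: the UCT for $\mathrm{C}^*_r(G)$ is not literally a consequence of the Baum--Connes conjecture; it is Tu's theorem, while Baum--Connes enters separately and only through L\"uck's trace theorem. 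Your citations are the right ones, but the two dependencies should be kept disentangled, as the paper does.
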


In fact, the methods introduced here also yield an AF-embedding result for crossed products of abelian C$^*$-algebras by amenable groups in the spirit of Pimsner's result for crossed products $C(X) \rtimes \mathbb{Z}$ in \cite{Pimsner:AFE} and Lin's result for crossed products $C(X) \rtimes \mathbb{Z}^k$ in \cite{Lin:AFEmbAHalg}.

\begin{theoremC}
If $X$ is a second countable, locally compact, Hausdorff space and $G$ is a countable, discrete, amenable group acting on $X$, then $C_0(X) \rtimes G$ embeds into a simple, unital AF-algebra if, and only if, $X$ admits a faithful, $G$-invariant, Borel, probability measure.
\end{theoremC}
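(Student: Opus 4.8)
The plan is to deduce Theorem C from Theorem A in one direction and from elementary trace theory in the other. Write $A = C_0(X) \rtimes G$; since $G$ is amenable the full and reduced crossed products coincide, and I will freely identify $A$ with $C^*_r(X \rtimes G)$, the reduced C*-algebra of the transformation groupoid $X \rtimes G$. Throughout I will use the canonical faithful conditional expectation $E \colon A \to C_0(X)$, together with the fact recorded in the introduction that for an exact C*-algebra a trace is amenable precisely when the associated GNS von Neumann algebra is hyperfinite.

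For the implication that a faithful, $G$-invariant, Borel probability measure $\mu$ yields an embedding, I would verify the four hypotheses of Theorem A for $A$. Separability is immediate from second countability of $X$ and countability of $G$. Since $C_0(X)$ is nuclear and $G$ is amenable, $A$ is nuclear, hence exact; moreover nuclearity forces $\pi_\sigma(A)''$ to be hyperfinite for every trace $\sigma$, so by the quoted characterization every trace on $A$ is automatically amenable. The measure $\mu$ produces the candidate trace $\tau := \int_X E(\,\cdot\,)\,d\mu$; the $G$-invariance of $\mu$ gives the trace identity (for positive $f \in C_0(X)$ and $g \in G$ one computes $\tau(\alpha_g(f)) = \tau\bigl((u_g f^{1/2})(u_g f^{1/2})^*\bigr) = \tau(f)$), and faithfulness of $\tau$ follows from faithfulness of both $E$ and $\mu$. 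The one genuinely nontrivial hypothesis is the UCT: here I would invoke the amenability of the groupoid $X \rtimes G$, which holds because $G$ is amenable, and Tu's theorem that the C*-algebra of an amenable groupoid lies in the bootstrap class. Granting all four hypotheses, Theorem A supplies a trace-preserving, hence injective, embedding of $A$ into a simple, unital AF-algebra.

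For the converse, suppose $\iota \colon A \hookrightarrow B$ is an embedding into a simple, unital AF-algebra $B$. Being unital and stably finite, $B$ carries a tracial state $\tau_B$, and since $B$ is simple the closed two-sided ideal $\{b \in B : \tau_B(b^*b) = 0\}$ vanishes, so $\tau_B$ is faithful. Then $\tau_A := \tau_B \circ \iota$ is a faithful trace on $A$, and its restriction to $C_0(X)$ corresponds, via Riesz representation, to a finite Borel measure $\mu$ on $X$. Faithfulness of $\tau_A$ forces $\mu$ to have full support and in particular $\mu(X) > 0$, while the trace identity applied exactly as above shows $\mu$ is $G$-invariant; rescaling by $\mu(X)^{-1}$ produces the required faithful, $G$-invariant, Borel probability measure.

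I expect the main obstacle to be the UCT verification in the first implication. Everything else is either standard permanence (separability, nuclearity, the passage from an invariant measure to a trace) or a direct consequence of Theorem A and the automatic faithfulness of traces on simple C*-algebras; by contrast, placing $C_0(X) \rtimes G$ in the bootstrap class genuinely requires the deep groupoid Baum--Connes machinery of Higson--Kasparov, L\"uck, and Tu, and it is this ingredient, rather than any feature specific to transformation groupoids, that does the real work.
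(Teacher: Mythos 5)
Your proposal is correct and takes essentially the same route as the paper: the forward direction verifies the hypotheses of Theorem A (nuclearity of the crossed product makes the faithful trace $\mu \circ E$ amenable, and Tu's theorem supplies the UCT), while the converse pulls back the automatically faithful trace on the simple, unital AF-algebra and restricts it to $C_0(X)$ via Riesz representation, rescaling at the end. Two cosmetic remarks: only Tu's result is needed for the UCT in Theorem C (Higson--Kasparov and L\"uck enter the proof of Theorem B, not this one), and your displayed computation with $u_g f^{1/2}$ is really the argument for the converse implication (trace $\Rightarrow$ invariant measure); for the forward implication one instead checks directly that $G$-invariance of $\mu$ makes $\mu \circ E$ tracial on the dense subalgebra $C_c(G, C_0(X))$, a standard fact the paper also leaves implicit.
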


The new technical tool facilitating these results is a classification theorem for faithful $^*$-homomorphisms into certain AF-algebras.  The precise statement of Theorem D is given in Corollary \ref{cor:TheoremD}.  Theorems A, B, and C will be deduced from Theorem D at the beginning of Section \ref{sec:Applications}.

\begin{theoremD}
If $A$ is a separable, unital, exact $C^*$-algebra satisfying the UCT with a faithful, amenable trace and $B$ is a simple, unital AF-algebra with a unique trace and divisible $K_0$-group, then the unital, trace-preserving embeddings $A \rightarrow B$ are classified up to approximate unitary equivalence by their behaviour on the $K_0$-group.
\end{theoremD}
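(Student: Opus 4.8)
The plan is to recast the assertion as a $KK$-theoretic uniqueness statement, to use the divisibility of $K_0(B)$ to see that the $K_0$-data is a complete $KK$-invariant, and then to descend from $KK$-equivalence to approximate unitary equivalence by an Elliott-style intertwining that exploits the finite, unique-trace structure of $B$. First I would pin down the invariant. Since $A$ satisfies the UCT and $B$ is AF (hence in the bootstrap class), $KK(A,B)$ sits in the UCT sequence with quotient the graded group $\operatorname{Hom}(K_*(A),K_*(B))$ and kernel $\operatorname{Ext}^1_{\mathbb{Z}}(K_*(A),K_{*+1}(B))$. Because $B$ is AF we have $K_1(B)=0$, so the Hom-term reduces to $\operatorname{Hom}(K_0(A),K_0(B))$; because $K_0(B)$ is divisible it is an injective $\mathbb{Z}$-module, so $\operatorname{Ext}^1_{\mathbb{Z}}(-,K_0(B))=0$ and the kernel vanishes. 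Hence the natural map collapses to an isomorphism $KK(A,B)\cong\operatorname{Hom}(K_0(A),K_0(B))$ under which the class $[\phi]$ corresponds exactly to $K_0(\phi)$. Thus two unital, trace-preserving $*$-homomorphisms $\phi,\psi\colon A\to B$ have $K_0(\phi)=K_0(\psi)$ if and only if $[\phi]=[\psi]$ in $KK(A,B)$; since $K_0(\phi)$ is plainly invariant under approximate unitary equivalence, the whole content of the theorem is the implication $[\phi]=[\psi]\Rightarrow\phi\approx_{\mathrm{au}}\psi$.

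Next I would secure the existence of maps with prescribed $K_0$-data and, more importantly for the uniqueness argument, their finite-dimensional approximability. The trace $\tau_A:=\tau_B\circ\phi$ is a faithful, amenable trace on the separable, exact, UCT algebra $A$, so the quasidiagonality theorem of \cite{TWW:Annals} and \cite{Gabe:TWW} applies and $\tau_A$ is quasidiagonal. Combining this with the inductive-limit decomposition $B=\overline{\bigcup_n B_n}$ into finite-dimensional subalgebras, I would approximate $\phi$ and $\psi$ in point-norm by completely positive, approximately multiplicative maps, and then by honest $*$-homomorphisms, that factor through matrix blocks inside the $B_n$. The trace-preservation fixes the multiplicities of these finite-dimensional factorizations up to the admissible $K_0$-data; this is the mechanism that converts abstract $K$-theoretic information into concrete matrix-unit data living inside $B$, and it is also what produces a map realizing a prescribed $\operatorname{Hom}(K_0(A),K_0(B))$-datum, giving the existence half of the classification.

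The crux is the uniqueness implication, and here I would run an approximate intertwining driven by a finite-dimensional uniqueness fact. The base case is that for a fixed finite-dimensional $*$-subalgebra $F\subseteq A$, any two unital $*$-homomorphisms $F\to B$ inducing the same map on $K_0$ are \emph{genuinely} unitarily equivalent in $B$: one conjugates both into a common $B_n$, and in a finite-dimensional target the multiplicities are determined by $K_0$ together with the trace, so the block multiplicities match and the two maps are conjugate. Simplicity, the unique trace, and the divisibility of $K_0(B)$ are exactly what guarantee that every projection of the required trace exists and that the multiplicities can be matched without obstruction. The plan is then to feed the finite-dimensional factorizations of the previous step through this finite-stage conjugacy and to assemble a sequence of unitaries $u_n\in B$ with $\|u_n\phi(a)u_n^*-\psi(a)\|\to0$ by a two-sided telescoping/intertwining argument.

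I expect the main obstacle to be precisely this descent from $KK$-equality to approximate unitary equivalence within a \emph{finite}, unital, unique-trace algebra, where the additive ``stabilize-and-absorb'' device available in the purely infinite setting is unavailable: one cannot enlarge $\phi$ by a nontrivial auxiliary summand inside the unital $B$ without violating the trace normalization $\tau_B(\phi(1))=1$. The resolution I would pursue is to use a stable uniqueness theorem in the spirit of Dadarlat--Eilers and Lin only at the finite stages, inside the matrix blocks of the $B_n$, where amplification and rotation are harmless because they can be carried out unitarily while remaining unital; the condition $[\phi]=[\psi]$ then feeds in through the vanishing of the $\operatorname{Ext}$-term (the divisibility of $K_0(B)$ again), which is exactly what ensures the successive finite-dimensional matchings can be chosen compatibly so that no $KK$-obstruction accumulates along the intertwining. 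Verifying this compatibility of the connecting unitaries across stages, and controlling the error estimates uniformly so the intertwining converges, is the delicate part of the argument.
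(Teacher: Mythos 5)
Your opening reduction is correct: since $B$ is AF with divisible $K_0$-group, $K_1(B)=0$ and $\operatorname{Ext}^1_{\mathbb{Z}}(K_0(A),K_0(B))=0$, so the UCT collapses to $KK(A,B)\cong\mathrm{Hom}_{\mathbb{Z}}(K_0(A),K_0(B))$, and the entire content is the implication from $K_0(\varphi)=K_0(\psi)$ to approximate unitary equivalence. But the mechanism you propose for that implication has a genuine gap: it assumes finite-dimensional structure that neither $A$ nor the maps possess. You want to approximate $\varphi$ and $\psi$ in point-norm ``by honest $*$-homomorphisms that factor through matrix blocks inside the $B_n$''; if $A$ is simple and infinite-dimensional --- e.g.\ $A=\mathrm{C}^*_r(G)$ for an infinite amenable group, which is the motivating example --- then $A$ admits no nonzero finite-dimensional representations at all, so no such maps exist. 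What quasidiagonality of the trace (\cite{TWW:Annals}, \cite{Gabe:TWW}) actually provides is completely positive, \emph{approximately} multiplicative, approximately trace-preserving maps into matrix algebras; these cannot be promoted to $*$-homomorphisms, and they do not automatically approximate a \emph{given} $\varphi$ in point-norm. For the same reason your ``base case'' --- genuine unitary equivalence of homomorphisms out of a finite-dimensional subalgebra $F\subseteq A$ --- has nothing to apply to: an abstract $A$ as in Theorem D need not contain any finite-dimensional subalgebra beyond $\mathbb{C}1_A$. This is precisely the tracial-approximation hypothesis on the domain that Theorem D is designed to eliminate, so an intertwining built on finite-dimensional conjugacies inside $A$ cannot get started; what must be compared are approximately multiplicative c.p.\ maps, and uniqueness for those is the hard content rather than a consequence of matching multiplicities.

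The paper resolves the stabilization obstacle you correctly identified, but in a different place than you propose: not in matrix blocks of $B$, but in the trace-kernel extension $0\to J_B\to B_\omega\to B^\omega\to 0$ of the ultrapower. One first proves uniqueness for unital, full, nuclear maps $A\to B_\omega$ (Proposition \ref{prop:UltraproductUniqueness}): since $B^\omega$ is a $\mathrm{II}_1$-factor and $\pi_{\tau_A}(A)''$ is hyperfinite (exactness plus amenability of the trace), the compositions $q_B\varphi$ and $q_B\psi$ are unitarily equivalent by von Neumann algebra techniques (Proposition \ref{prop:FiniteFactorClassification}); after conjugation, $(\varphi,\psi)$ becomes a Cuntz pair over a separable admissible kernel $I\subseteq J_B$, and the group that matters is $KK_{\mathrm{nuc}}(A,\mathbb{M}_2(I))\cong\mathrm{Hom}_{\mathbb{Z}}(K_0(A),K_0(\mathbb{M}_2(I)))$ (Proposition \ref{prop:AdmissibleKernel}), not $KK(A,B)$. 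The point of this change of locus is that $I$ is stable and has the corona factorization property, so the Elliott--Kucerovsky absorption theorem (Theorem \ref{thm:ElliottKucerovskyGabe}) together with the Dadarlat--Eilers machinery (Theorem \ref{thm:DadarlatEilers}, Proposition \ref{prop:KKUniqueness}) genuinely removes the absorbing summand, with $\mathcal{Q}$-stability of $B$ (a consequence of divisibility of $K_0(B)$ and Elliott's theorem) absorbing the residual $2\times 2$ amplification --- exactly the removal that, as you observed, is impossible inside the unital, finite algebra $B$ itself. Only afterwards does one descend to $B$, via a uniqueness lemma for approximately multiplicative maps proved by an ultraproduct contradiction (Lemma \ref{lemma:ApproximateUniqueness}) and an Elliott intertwining (Theorem \ref{thm:Classification}); existence likewise starts from Theorem \ref{thm:TWW} and corrects the $K_0$-data by a prescribed class over the trace-kernel ideal (Proposition \ref{prop:UltraproductExistence}). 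The missing idea in your outline is this relocation of the $KK$-obstruction and its resolution to the trace-kernel ideal, where stabilization is harmless.
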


A more general classification result is given in Theorem \ref{thm:Classification} which does not require any inductive limit structure or nuclearity assumption on the codomain C$^*$-algebra.  Along with the applications to AF-embeddability listed above, this technical refinement of Theorem D also leads to a self-contained proof of an abstract characterization of AF-algebras among simple, unital C$^*$-algebras with a unique trace and divisible $K_0$-group in Corollary \ref{cor:AFAlg}.  These results also lead to new examples of MF algebras arising as reduced crossed products by free groups (see Corollary \ref{cor:KerrNowak}) building on \cite{KerrNowak, Rainone:MF, RainoneSchafhauser, Schafhauser:MF} and give AF-embedding results for $k$-graph algebras extending those of \cite{Schafhauser:Graph1} and \cite{ClarkHuefSims}.

Special cases of Theorem D have appeared in several places in the literature.  When $A$ is an AF-algebra or an A$\mathbb{T}$-algebra of real rank zero, Theorem D is a special case of classical results of Elliott in \cite{Elliott:AF} and \cite{Elliott:AT}, respectively.  For a commutative or, more generally, approximately homogeneous (AH) C$^*$-algebra $A$, classification results for embeddings from $A$ into an AF-algebra form a crucial part of the AF-embedding results for crossed products obtained by Lin in \cite{Lin:AFEmbAHalg}.  Also, the classification results for simple, nuclear C$^*$-algebras with tracial approximation structure, initiated by Lin in \cite{Lin:TAFClassification} and culminating in the remarkable work of Gong, Lin, and Niu in \cite{GongLinNiu}, depends heavily on classification results for embeddings between such algebras.

The power of Theorem D and what distinguishes Theorem D from existing classification results is the general hypothesis on the domain C$^*$-algebra.  The proof is motivated by the classical classification results for tracially AF-algebras in \cite{Dadarlat:TAFClassification} and \cite{Lin:TAF}, but it avoids making any use of the internal structure of the domain C$^*$-algebra and makes no use of inductive limit models such as the AH-algebras used in \cite{Lin:TAF}.  The proof of Theorem D, and the more general Theorem \ref{thm:Classification} below, serve as proof of concept for an abstract approach to Elliott's classification programme which does not depend on tracial approximations.  A more general result of this form will appear in forthcoming work of the author with Carri\'{o}n, Gabe, Tikuisis, and White.

\subsection*{On the proof of Theorem D}

The proof of Theorem D is heavily motivated by the classification theorem for separable, simple, unital, nuclear, tracially AF-algebras satisfying the UCT due to Lin in \cite{Lin:TAFClassification} and the classification of $^*$-homomorphisms between such algebras due to Dadarlat in \cite{Dadarlat:TAFClassification}, and, on a non-technical level, there are very close analogies between the proofs of these results.  The class of tracially AF-algebras was introduced by Lin in \cite{Lin:TAF} motivated in part by an approximation condition proved by Popa in \cite{Popa:Quasidiagonality} for simple, unital, quasidiagonal C$^*$-algebras of real rank zero.  Roughly, a tracially AF-algebra is a C$^*$-algebra $B$ which admits the following approximation condition in the spirit of Egoroff's Theorem: there is an approximately central projection in $B$ with large trace such that the corner $p B p$ is locally approximated by a finite-dimensional C$^*$-subalgebra where the approximations are in operator norm.

The basic strategy for the classification results in \cite{Dadarlat:TAFClassification} and \cite{Lin:TAFClassification} is to use the tracial finite-dimensional approximations for $A$ to produce approximately multiplicative maps from $A$ into a suitably well-behaved AH-algebra which approximately preserve the tracial data.  From here, one then perturbs these maps on tracially small corners of $A$ to adjust the behaviour of the maps on the infinitesimal elements of the $K$-theory groups.  Then, the structure of AH-algebras allows one to construct an embedding of the given AH-algebra into $B$ which implements an isomorphism on the invariant so that, upon composing this embedding with the approximately multiplicative maps into the AH-algebra, one obtains approximately multiplicative maps $A \rightarrow B$ with prescribed behaviour on $K$-theory and traces.  Together with a uniqueness result for approximately multiplicative maps from $A$ to $B$ relying on a Weyl-von Neumann type theorem for Hilbert module representations as in \cite{Lin:StableApproxEquivalence} or \cite{DadarlatEilers:Classification} and using the tracial approximation structure of $B$, an intertwining argument allows one to construct $^*$-homomorphisms from $A$ to $B$ with prescribed behaviour on $K$-theory and traces.

Changing perspective slightly, the almost multiplicative, almost trace-preserving maps $A \rightarrow B$ can be encoded as a trace-preserving $^*$-homomorphism $\psi : A \rightarrow B_\omega$ where $B_\omega$ denotes the norm ultrapower of $B$.  Adjusting the behaviour of these approximations on the tracially small corner of $A$ can be viewed as classifying, up to unitary equivalence, all $^*$-homomorphisms $\varphi : A \rightarrow B_\omega$ which are trace-zero perturbations of $\psi$.  It should be noted that this formalism is also explicitly used in the classical tracially AF-algebra classification results (usually with sequence algebras in place of ultrapowers, but this is mostly a matter of taste).

In the abstract setting of Theorem D, the existence of $\psi$ will follow directly from the quasidiagonality theorem of \cite{TWW:Annals} and \cite{Gabe:TWW} (see Theorem \ref{thm:TWW} below).  The new step is an abstract method for describing the trace-zero perturbations of $\psi$ up to unitary equivalence.  Consider the \emph{trace-kernel extension}
\[ \begin{tikzcd} 0 \arrow{r} & J_B \arrow{r}{j_B} & B_\omega \arrow{r}{q_B} & B^\omega \arrow{r} & 0 \end{tikzcd} \]
associated to $B$ where $B^\omega$ is the 2-norm ultrapower of $B$ and
\[ J_B = \{ b \in B_\omega : \tau_{B_\omega}(b^*b) = 0 \}. \]
Roughly, $B^\omega$ and $J_B$ will play the roles of the tracially large and small corners of $B$, respectively.

When $B$ has a unique trace and no non-zero, finite-dimensional representations, $B^\omega$ is a $\mathrm{II}_1$-factor, and hence all trace-preserving $^*$-homomorphisms $A \rightarrow B^\omega$ factor through the von Neumann algebra $\pi_{\tau_A}(A)''$ associated to $A$ and $\tau_A$.  When $A$ is exact and $\tau_A$ is amenable, $\pi_{\tau_A}(A)''$ is a hyperfinite von Neumann algebra, and hence the classification of normal $^*$-homomorphisms from hyperfinite von Neumann algebras to $\mathrm{II}_1$-factors yields a classification result for trace-preserving $^*$-homomorphisms $A \rightarrow B^\omega$ up to unitary equivalence.  It is through exploiting this observation that both the tracial approximation assumptions on $A$ and factoring through a model AH-algebra are avoided.

Having classification of trace-preserving $^*$-homomorphisms $A \rightarrow B^\omega \cong B_\omega / J_B$, the goal becomes to lift this classification along $q_B$ to obtain classification of $^*$-homomorphisms $A \rightarrow B_\omega$ up to unitary equivalence.  Using extension theoretic methods, it is shown in \cite{Schafhauser:Crelle} that (at least when $B \cong \mathcal{Q}$), modulo a certain $KK$-obstruction, a faithful, nuclear $^*$-homomorphism $A \rightarrow B^\omega$ can be lifted to a nuclear $^*$-homomorphism $A \rightarrow B_\omega$.  The strategy behind Theorem D is to use extension theoretic methods to show that the nuclear $^*$-homomorphisms $A \rightarrow B_\omega$ lifting a given faithful, nuclear $^*$-homomorphism $A \rightarrow B^\omega$ are parametrized up to unitary equivalence by the Kasparov group $KK_{\mathrm{nuc}}(A, J_B)$.\footnote{The C$^*$-algebra $J_B$ is not $\sigma$-unital.  In the actual proof, all computations will be done in a sufficiently large, separable C$^*$-subalgebra of $J_B$.}  Now, using the UCT and the $K$-theoretic assumptions on $B$, the group $KK_\mathrm{nuc}(A, J_B)$ is computed in terms of the $K_0$-groups of $A$ and $B_\omega$.  Together with an intertwining argument given in Section \ref{sec:Classification}, this will prove Theorem D.

In order to illustrate the lifting process suggested above, let us consider the uniqueness result (Proposition \ref{prop:UltraproductUniqueness}) in more detail; the corresponding existence result (Proposition \ref{prop:UltraproductExistence}) has a similar flavour.  Let $A$ and $B$ be as in Theorem D, and suppose $\varphi, \psi : A \rightarrow B_\omega$ are unital, full, nuclear $^*$-homomorphisms with $\tau_{B_\omega} \varphi = \tau_{B_\omega} \psi$ and $K_0(\varphi) = K_0(\psi)$.

Since $\varphi$ and $\psi$ agree on the trace on $B_\omega$, $q_B \varphi, q_B \psi : A \rightarrow B^\omega$ agree on the trace on $B^\omega$.  As $B^\omega$ is a $\mathrm{II}_1$-factor and $\varphi$ and $\psi$ are nuclear, $q_B \varphi$ and $q_B \psi$ are unitarily equivalent (see Proposition \ref{prop:FiniteFactorClassification} below).  Let $u \in B^\omega$ denote a unitary conjugating $q_B \psi$ to $q_B \varphi$.  If $F \in B_\omega$ with $q_B(F) = u$, then
\[ F^*F - 1_{B_\omega}, \hskip 3pt FF^* - 1_{B_\omega}, \hskip 3pt \varphi(a) - F \psi(a) F^* \in J_B \]
for all $a \in A$.\footnote{In fact, as the unitary group of $B^\omega$ is path-connected, $u$ lifts to a unitary.  For technical reasons, it will be helpful to take a unitary lift of $u$ and replace $\psi$ with a unitary conjugate of $\psi$ to arrange $F = 1_{B_\omega}$.}  Hence viewing $B_\omega$ as a C$^*$-subalgebra of the multiplier algebra $M(J_B)$ of $J_B$, the triple $(\varphi, \psi, F)$ defines an element in $KK_{\mathrm{nuc}}(A, J_B)$.

Under the hypotheses of Theorem D, since $K_0(\varphi) = K_0(\psi)$, this $KK$-class vanishes, so there are a $^*$-homomorphism $\pi : A \rightarrow M(J_B \otimes \mathcal{K})$ and a unitary $V \in M(J_B \otimes \mathcal{K})$ such that
\[ V - F \oplus 1_{M(J_B \otimes \mathcal{K})} \in J_B \otimes \mathcal{K} \quad \text{and} \quad \varphi \oplus \pi = \mathrm{ad}(V) (\psi \oplus \pi). \]
In Section \ref{sec:TraceKernel}, following the techniques introduced in \cite{Schafhauser:Crelle} for the case $B = \mathcal{Q}$, it is shown that, modulo separability issues, $J_B$ is stable and has the corona factorization property.  From here, the fullness of $\varphi$ and $\psi$ together with a Weyl-von Neumann type absorption theorem due to Elliott and Kucerovsky in \cite{ElliottKucerovsky} (Theorem \ref{thm:ElliottKucerovskyGabe} below) is used to remove the summand $\pi$ and show there is a unitary $U \in M(J_B)$ with
\[ U - F \in J_B \quad \text{and} \quad  \varphi = \mathrm{ad}(U) \psi. \]
But now, as $F \in B_\omega$ and $U - F \in J_B \subseteq B_\omega$, we have $U \in B_\omega$.  Since $U$ conjugates $\psi$ to $\varphi$ by construction, this shows the uniqueness result.

The paper is organized as follows.  In Section \ref{sec:Preliminaries}, some preliminary results on amenable traces are collected along with some general machinery for reducing problems to separable C$^*$-algebras.  Section \ref{sec:KKTheory} records some KK-theoretic prerequisites and obtains the non-stable KK-theoretic results needed in Section \ref{sec:UltraproductClassification}.  Section \ref{sec:TraceKernel} is devoted to the trace-kernel extension and proves certain extension-theoretic regularity conditions for the trace-kernel ideal.  The main classification results are given in Sections \ref{sec:UltraproductClassification} and \ref{sec:Classification}; the former section proves classification results for embeddings into ultrapower C$^*$-algebras, and the latter section restates these results in terms of approximate morphisms and, from here, obtains Theorem D via an intertwining argument.  Finally, Theorems A, B, and C along with some other consequences of Theorem D are proved in Section \ref{sec:Applications}.

\subsection*{Acknowledgements}

I would like to thank Jos\'{e} Carri\'{o}n, Jamie Gabe, Aaron Tikuisis, and Stuart White for several helpful conversations regarding this work and for their comments on early drafts of this paper.  I am also grateful to Matt Kennedy for pointing out the Choquet theoretic result used in the proof of Corollary \ref{cor:MFTrace}.  Finally, I thank the anonymous referees for several helpful comments on this paper.

\section{Preliminaries}\label{sec:Preliminaries}

\subsection{Amenable Traces}

Throughout, the word \emph{trace} is reserved for a tracial state on a C$^*$-algebra.  Given a C$^*$-algebra $A$ and a trace $\tau_A$ on $A$, let $\hat{\tau}_A : K_0(A) \rightarrow \mathbb{R}$ denote the induced state on $K_0(A)$.  In the case $A$ has a unit, $\hat{\tau}_A([p]) = (\tau \otimes \mathrm{Tr}_\mathcal{K})(p)$ for a projection $p \in A \otimes \mathcal{K}$ where $\mathrm{Tr}_\mathcal{K}$ is the usual tracial weight on the C$^*$-algebra $\mathcal{K}$ of compact operators on a separable, infinite-dimensional Hilbert space.

For $n \geq 1$, let $\tau_{\mathbb{M}_n}$ denote the unique trace on the C$^*$-algebra $\mathbb{M}_n$ of $n \times n$ matrices over $\mathbb{C}$ and define the \emph{2-norm} $\|\cdot\|_2$ on $\mathbb{M}_n$ by $\|a\|_2 = \tau_{\mathbb{M}_n}(a^*a)^{1/2}$ for all $a \in \mathbb{M}_n$.  A trace $\tau_A$ on a C$^*$-algebra $A$ is called \emph{amenable} if there is a net $\varphi_i : A \rightarrow \mathbb{M}_{n(i)}$ of completely positive, contractive maps with
\[ \| \varphi_i(aa') - \varphi_i(a) \varphi_i(a') \|_2 \rightarrow 0 \quad \text{and} \quad \tau_{\mathbb{M}_{n(i)}}(\varphi_i(a)) \rightarrow \tau_A(a) \]
for all $a, a' \in A$.  See \cite{Brown:Memoirs} or Chapter 6 of \cite{BrownOzawa} for a detailed treatment of amenable traces.  Note that by Theorem 4.2.1 of \cite{Brown:Memoirs}, all traces on nuclear C$^*$-algebras are amenable.

Exploiting the connections between amenable traces and hyperfinite von Neumann algebras given in Theorem 3.2.2 of \cite{Brown:Memoirs} leads to the following uniqueness result which is well known in the case when $A$ is nuclear.

\begin{proposition}\label{prop:FiniteFactorClassification}
If $A$ is a $C^*$-algebra, $\mathcal{M}$ is a finite factor, and $\varphi, \psi : A \rightarrow \mathcal{M}$ are weakly nuclear $^*$-homomorphisms such that $\tau_\mathcal{M} \varphi = \tau_\mathcal{M} \psi$, then there is a net of unitaries $(u_i)$ in $\mathcal{M}$ such that
\[ \| \varphi(a) - u_i \psi(a) u_i^* \|_2 \rightarrow 0 \]
for all $a \in A$.
\end{proposition}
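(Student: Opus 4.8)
The plan is to reduce the statement to the uniqueness of normal embeddings of a hyperfinite von Neumann algebra into a finite factor, which in turn reduces to the finite-dimensional case. Write $\tau_A := \tau_\mathcal{M}\varphi = \tau_\mathcal{M}\psi$, let $(\pi_{\tau_A}, H_{\tau_A}, \xi_{\tau_A})$ be the associated GNS triple, and set $N := \pi_{\tau_A}(A)''$.

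First I would factor $\varphi$ and $\psi$ through $N$ as normal maps. Represent $\mathcal{M}$ in standard form on $H = L^2(\mathcal{M}, \tau_\mathcal{M})$ with cyclic and separating trace vector $\hat{1}$. Since $\langle \varphi(a)\hat 1, \hat 1\rangle = \tau_\mathcal{M}(\varphi(a)) = \tau_A(a)$, the restriction of $\varphi$ to the cyclic subspace $\overline{\varphi(A)\hat 1}$ is unitarily equivalent to $\pi_{\tau_A}$; and as $\hat 1$ is separating for $\mathcal{M}$, hence for $\varphi(A)'' \subseteq \mathcal{M}$, compression to this subspace is an injective normal $*$-homomorphism. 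Thus $\varphi(a) \mapsto \pi_{\tau_A}(a)$ inverts to a normal, trace-preserving $*$-isomorphism $\tilde\varphi : N \to \varphi(A)'' \subseteq \mathcal{M}$ with $\varphi = \tilde\varphi\, \pi_{\tau_A}$. The same construction produces $\tilde\psi : N \to \mathcal{M}$ with $\psi = \tilde\psi\, \pi_{\tau_A}$. Being normal and trace-preserving, $\tilde\varphi$ and $\tilde\psi$ are isometric for the $2$-norms induced by $\tau_N$ and $\tau_\mathcal{M}$.

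Next I would invoke amenability. By Theorem 3.2.2 of \cite{Brown:Memoirs}, weak nuclearity of $\varphi$ forces $\tau_A$ to be amenable and hence $N$ to be hyperfinite; this is precisely the step that replaces the classical nuclearity hypothesis on $A$. Consequently, for every finite set $F \subseteq A$ and every $\epsilon > 0$ there is a finite-dimensional $*$-subalgebra $D \subseteq N$ and elements $d_a \in D$ with $\|\pi_{\tau_A}(a) - d_a\|_2 < \epsilon$ for all $a \in F$. (When $A$ is not separable, one applies this inside the von Neumann subalgebra of $N$ generated by the separable $C^*$-algebra that $F$ generates, which is again hyperfinite via the trace-preserving conditional expectation.) The restrictions $\tilde\varphi|_D$ and $\tilde\psi|_D$ are then $*$-homomorphisms from the finite-dimensional algebra $D$ into the finite factor $\mathcal{M}$ inducing the same trace $\tau_N|_D$; since two projections of equal trace in a finite factor are Murray--von Neumann equivalent, matching up systems of matrix units produces a unitary $w = w_{F,\epsilon} \in \mathcal{M}$ with $\tilde\varphi(d) = w\tilde\psi(d)w^*$ for all $d \in D$. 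Using that $\tilde\varphi, \tilde\psi$ are $2$-isometric and that conjugation by $w$ preserves $\|\cdot\|_2$, for each $a \in F$ I obtain
\[ \|\varphi(a) - w\psi(a)w^*\|_2 \le 2\|\pi_{\tau_A}(a) - d_a\|_2 < 2\epsilon. \]
Directing the pairs $(F, \epsilon)$ by inclusion and reverse order then yields the required net $(u_i)$.

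The main obstacle is the passage through the von Neumann algebra $N$: establishing the normal factoring of $\varphi$ and $\psi$ through $N$ and, crucially, deducing hyperfiniteness of $N$ from the mere weak nuclearity of $\varphi$ rather than from nuclearity of $A$. Once Theorem 3.2.2 of \cite{Brown:Memoirs} supplies this, the remaining content is the finite-dimensional uniqueness together with a standard approximation, the only technical wrinkle being the finite-dimensional approximation when $A$ is non-separable.
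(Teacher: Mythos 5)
Your proof has the same skeleton as the paper's: factor $\varphi$ and $\psi$ through normal $*$-homomorphisms defined on $N := \pi_{\tau_A}(A)''$, show $N$ is hyperfinite, and then use the uniqueness of normal, trace-preserving $*$-homomorphisms from a hyperfinite von Neumann algebra into a finite factor (which you reprove by hand via finite-dimensional $2$-norm approximation and matching of matrix units, where the paper cites the proof of Proposition 2.1 of \cite{CuipercaGiordanoNgNiu}; your inlined argument for that part is correct). The gap is in the step you yourself single out as crucial: you obtain hyperfiniteness of $N$ from the chain \emph{weak nuclearity of $\varphi$ $\Rightarrow$ $\tau_A$ amenable $\Rightarrow$ $N$ hyperfinite}. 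The second implication is false for a general C$^*$-algebra $A$: amenability of a trace does not imply hyperfiniteness of its GNS closure unless $A$ is exact (more precisely, locally reflexive). For instance, the canonical trace on the full group C$^*$-algebra $\mathrm{C}^*(\mathbb{F}_2)$ is amenable, since $\mathbb{F}_2$ is residually finite and hence has Kirchberg's factorization property \cite{Kirchberg:FactorizationProperty}, yet its GNS closure is $L(\mathbb{F}_2)$, which is not hyperfinite. This is exactly why the paper's introduction asserts the equivalence between amenability of $\tau_A$ and hyperfiniteness of $\pi_{\tau_A}(A)''$ only for exact $A$, whereas Proposition \ref{prop:FiniteFactorClassification} assumes nothing about $A$; as written, your reasoning proves the proposition only under an exactness assumption.

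The repair is the paper's maneuver, and all of its ingredients are already present in your construction. Your $\tilde\varphi$ is a normal, trace-preserving $*$-isomorphism of $N$ onto $\varphi(A)'' \subseteq \mathcal{M}$; composing $\tilde\varphi^{-1}$ with the trace-preserving (hence normal) conditional expectation of $\mathcal{M}$ onto $\varphi(A)''$ yields a normal, completely positive map $\theta : \mathcal{M} \rightarrow N$ satisfying $\theta \varphi = \pi_{\tau_A}$; this is the role played by Lemma 1.5.11 of \cite{BrownOzawa} in the paper's proof. Since weak nuclearity is preserved under post-composition with a normal completely positive map, $\pi_{\tau_A} = \theta \varphi$ is itself weakly nuclear as a representation of $A$ on $N$, and it is the equivalence of weak nuclearity of the GNS representation with hyperfiniteness of its weak closure --- the equivalence of (5) and (6) in Theorem 3.2.2 of \cite{Brown:Memoirs}, which needs no hypothesis on $A$ --- that delivers hyperfiniteness of $N$. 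Amenability of $\tau_A$ is a strictly weaker, $2$-norm--level approximation property and cannot serve as the bridge here (indeed, even your claim that $\tau_A$ is amenable is most naturally obtained through this same map $\theta$). Once this step is corrected, the remainder of your argument, including the reduction to separable predual via a trace-preserving conditional expectation and the final $2\epsilon$-estimate using that $\tilde\varphi$ and $\tilde\psi$ are $2$-norm isometric, goes through.
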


\begin{proof}
Let $\tau_A = \tau_\mathcal{M} \varphi$ and note that $\varphi$ and $\psi$ induce normal $^*$-homomorphisms $\bar{\varphi}, \bar{\psi} : \pi_{\tau_A}(A)'' \rightarrow \mathcal{M}$ such that $\bar{\varphi}(\pi_{\tau_A}(a)) = \varphi(a)$ and $\bar{\psi}(\pi_{\tau_A}(a)) = \psi(a)$ for all $a \in A$.  As $\bar{\varphi}$ is faithful and normal, Lemma 1.5.11 of \cite{BrownOzawa} implies that there is a normal, completely positive map $\theta : \mathcal{M} \rightarrow \pi_{\tau_A}(A)''$ such that $\pi_\tau(a) = \theta(\varphi(a))$ for all $a \in A$.  Since $\varphi$ is weakly nuclear, $\pi_\tau(A)''$ is hyperfinite by the equivalence of (5) and (6) in Theorem 3.2.2 of \cite{Brown:Memoirs}.  The result follows from the classification of normal $^*$-homomorphisms from hyperfinite von Neumann algebras into finite factors; see the proof of Proposition 2.1 in \cite{CuipercaGiordanoNgNiu} for example.
\end{proof}

The following fundamental result was proved for nuclear C$^*$-algebras by Tikuisis, White, and Winter in \cite{TWW:Annals} and was extended to exact C$^*$-algebras by Gabe in \cite{Gabe:TWW} (see also \cite{Schafhauser:Crelle} for a short proof).  This result is the starting point for the existence result in Theorem D.

Let $\mathcal{Q} = \bigotimes_{n \geq 1} \mathbb{M}_n$ denote the universal UHF-algebra and let $\mathcal{Q}_\omega$ denote the norm ultrapower of $\mathcal{Q}$ with respect to a fixed free ultrafilter $\omega$ on the natural numbers.  Recall that a $^*$-homomorphism $\varphi : A \rightarrow B$ between C$^*$-algebras $A$ and $B$ is \emph{full} if for every non-zero $a \in A$, $\varphi(a)$ generates $B$ as an ideal.

\begin{theorem}\label{thm:TWW}
If $A$ is a separable, unital, exact $C^*$-algebra satisfying the UCT and $\tau_A$ is a faithful, amenable trace on $A$, then there is a unital, full, nuclear $^*$-homomorphism $\varphi : A \rightarrow \mathcal{Q}_\omega$ such that $\tau_{\mathcal{Q}_\omega} \varphi = \tau_A$.
\end{theorem}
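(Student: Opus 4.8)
The plan is to deduce the theorem from its von Neumann algebraic shadow by a lifting argument along the trace-kernel extension, following the extension-theoretic strategy of \cite{Schafhauser:Crelle} (the ``short proof'' referred to above). Write
\[ 0 \to J \to \mathcal{Q}_\omega \xrightarrow{\,q\,} \mathcal{Q}^\omega \to 0 \]
for the trace-kernel extension, where $\mathcal{Q}^\omega$ is the $2$-norm ultrapower of $\mathcal{Q}$ and $J = \ker q$. Since $\mathcal{Q}$ has a unique trace and no finite-dimensional quotients, $\mathcal{Q}^\omega$ is a $\mathrm{II}_1$-factor, and the trace $\tau_{\mathcal{Q}_\omega}$ vanishes on $J$ and descends to the canonical trace on $\mathcal{Q}^\omega$.

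First I would build the map at the von Neumann level. As $\tau_A$ is faithful, the GNS construction realizes $A$ as a trace-preserving subalgebra of $M := \pi_{\tau_A}(A)''$, and as $\tau_A$ is amenable and $A$ is exact, $M$ is hyperfinite by Theorem 3.2.2 of \cite{Brown:Memoirs}. Because $M$ has separable predual, \cite{MvN:RingsOfOperatorsIV} provides a trace-preserving normal embedding $M \hookrightarrow \mathcal{R} \subseteq \mathcal{R}^\omega \cong \mathcal{Q}^\omega$; composing yields a trace-preserving $*$-homomorphism $\bar\varphi : A \to \mathcal{Q}^\omega$ which is weakly nuclear since it factors through the hyperfinite algebra $M$. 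The task is then to lift $\bar\varphi$ through $q$ to a nuclear $*$-homomorphism $\varphi : A \to \mathcal{Q}_\omega$.

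I would phrase this lifting problem extension-theoretically. Pulling the trace-kernel extension back along $\bar\varphi$ yields an extension
\[ 0 \to J \to E \to A \to 0, \]
and a $*$-homomorphism lift of $\bar\varphi$ is precisely a $*$-homomorphism splitting of this extension. Since $J$ fails to be $\sigma$-unital, I would first absorb all the relevant data into a separable subalgebra $J_0 \subseteq J$ and work with the induced separable extension of $A$ by $J_0$. The point is that $J$, suitably separabilised, is stable and has the corona factorization property by the methods of \cite{Schafhauser:Crelle}; this regularity makes the extension absorbing, so by the Weyl--von Neumann type absorption theorem of Elliott and Kucerovsky \cite{ElliottKucerovsky} the extension splits if and only if its class in $KK^1_{\mathrm{nuc}}(A, J_0)$ vanishes.

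The crux, and the step I expect to be the main obstacle, is the $KK$-computation showing this obstruction class is zero. Both hypotheses on $A$ are essential here: exactness, through amenability of the trace, keeps all maps nuclear so that $KK^1_{\mathrm{nuc}}$ is the correct receptacle, while the UCT for $A$ allows $KK^1_{\mathrm{nuc}}(A, J_0)$ to be computed from the $K$-theory of $A$ and of $J_0$ via a universal coefficient sequence. Feeding in $K_*(\mathcal{Q}) = (\mathbb{Q}, 0)$ together with the six-term sequences governing $K_*(J_0)$ in terms of the norm and $2$-norm ultrapowers identifies the relevant coefficient groups and forces the obstruction to vanish; this is exactly the computation that, in the present approach, replaces the delicate finite-dimensional approximation estimates of \cite{TWW:Annals}. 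Once a nuclear splitting $\varphi : A \to \mathcal{Q}_\omega$ is produced, trace-preservation is automatic, since $q\varphi = \bar\varphi$ and $\tau_{\mathcal{Q}_\omega} = \tau_{\mathcal{Q}^\omega} \circ q$; and fullness follows from faithfulness of $\tau_A$, for then $\varphi(a)$ has strictly positive trace for every nonzero $a \in A$, and a positive element of $\mathcal{Q}_\omega$ with positive trace is full by comparison in the UHF-algebra $\mathcal{Q}$.
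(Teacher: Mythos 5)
First, a point of comparison: the paper does not actually prove Theorem \ref{thm:TWW} internally --- it is assembled from the literature, with existence of a unital, nuclear, trace-preserving *-homomorphism quoted from Theorem 3.8 and Proposition 3.4(ii) of \cite{Gabe:TWW} and fullness deduced from Lemma 2.2 of \cite{TWW:Annals} together with faithfulness of $\tau_A$. What you propose is instead a reconstruction of the extension-theoretic proof in \cite{Schafhauser:Crelle}, which is a legitimate and genuinely different route; your outer scaffolding (GNS plus hyperfiniteness via Theorem 3.2.2 of \cite{Brown:Memoirs}, the pullback extension, separabilization of $J$, Elliott--Kucerovsky absorption, trace preservation being automatic because $\tau_{\mathcal{Q}_\omega} = \tau_{\mathcal{Q}^\omega} \circ q$, and fullness via strict comparison) is the right skeleton for that route.

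However, there is a genuine gap at exactly the step you identify as the crux. Your proposed mechanism --- that feeding in $K_*(\mathcal{Q})=(\mathbb{Q},0)$ and the six-term sequences ``identifies the relevant coefficient groups and forces the obstruction to vanish'' --- cannot work, because the obstruction \emph{group} does not vanish. For a separable admissible kernel $J_0$ one has $K_1(J_0)=0$ and $K_0(J_0)$ divisible, so the UCT kills both $\mathrm{Ext}$ terms and the term $\mathrm{Hom}_{\mathbb{Z}}(K_0(A),K_1(J_0))$, leaving
\[ KK^1_{\mathrm{nuc}}(A, J_0) \cong \mathrm{Hom}_{\mathbb{Z}}(K_1(A), K_0(J_0)), \]
which is typically nonzero: if $K_1(A)\cong \mathbb{Z}$ this is all of $K_0(J_0)$, and $J_0$ has plenty of nonzero projections since $J$ has real rank zero. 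So no bookkeeping with coefficient groups alone can help; one must show that the \emph{specific class} of the pulled-back extension is zero. The argument that actually closes the loop in \cite{Schafhauser:Crelle} is: under the displayed isomorphism the class of the extension is its index map $\partial : K_1(A) \to K_0(J_0)$; by naturality of the six-term sequence applied to the morphism from the pullback extension into the trace-kernel extension, the composition $K_1(A) \to K_0(J_0) \to K_0(J)$ agrees with $\partial_{\mathrm{tk}} \circ K_1(\bar\varphi)$, which factors through $K_1(\mathcal{Q}^\omega) = 0$ --- zero precisely because $\mathcal{Q}^\omega$ is a von Neumann algebra (a $\mathrm{II}_1$-factor), so its unitary group is connected; and finally, since $K_1(A)$ is countable and each element of $\ker\bigl(K_0(J_0) \to K_0(J)\bigr)$ already dies in $K_0$ of some larger separable subalgebra, one may enlarge $J_0$ to a separable admissible kernel for which $\partial$ itself is zero. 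This is exactly the kind of re-separabilization the paper's Proposition \ref{prop:NonseparableHomGroups} is designed for, and it is the same $\mathrm{II}_1$-factor input ($K_1(B^\omega)=0$ forcing injectivity of $K_0(j_B)$) that the paper uses in Propositions \ref{prop:UltraproductExistence} and \ref{prop:UltraproductUniqueness}. Two smaller gaps in the same spirit: Theorem \ref{thm:ElliottKucerovskyGabe} requires the Busby invariant to be unitizably full (stability and the corona factorization property of $J_0$ alone do not give absorption), which must be verified for your pulled-back extension; and a *-homomorphism splitting need not be unital on the nose, since $1_{\mathcal{Q}_\omega} - \varphi(1_A)$ is only a trace-zero projection in $J$, so you need cancellation and strict comparison in $\mathcal{Q}_\omega$ (Proposition \ref{prop:TraceKernelExtension}) to conjugate the lift to a unital one.
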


The result is not quite stated this way in the references given above.  The existence of a unital, nuclear, trace-preserving $^*$-homomorphism $A \rightarrow \mathcal{Q}_\omega$ follows from Theorem 3.8 and Proposition 3.4(ii) in \cite{Gabe:TWW}, and this $^*$-homomorphism is necessarily full by Lemma 2.2 in \cite{TWW:Annals} and the faithfulness of $\tau_A$.

\subsection{Separability Issues}

Throughout the paper, several non-separable C$^*$-algebras such as ultraproducts and their trace-kernel ideals (as defined in Section \ref{sec:TraceKernel}) will be considered.  The lack of separability causes technical issues in certain arguments; this is especially the case with $KK$-theoretic considerations where all C$^*$-algebras are typically required to be separable or, at the very least, $\sigma$-unital.  This section collects some general methods for reducing problems to the separable setting.

\begin{definition}[Blackadar, Section II.8.5 of \cite{Blackadar:Encyclopedia}]
A property $(P)$ of C$^*$-algebras is called \emph{separably inheritable} if
\begin{enumerate}
  \item whenever $A$ is a C$^*$-algebra satisfying $(P)$ and $A_0$ is a separable C$^*$-subalgebra of $A$, there is a separable C$^*$-subalgebra $\hat{A}$ of $A$ which satisfies $(P)$ and contains $A_0$, and
  \item whenever $A_1 \hookrightarrow A_2 \hookrightarrow A_3 \hookrightarrow \cdots$ is an inductive system of separable C$^*$-algebras with injective connecting maps, if each $A_n$ satisfies $(P)$, then $\underset{\longrightarrow}{\lim} \, A_n$ satisfies $(P)$.
\end{enumerate}
\end{definition}

Many important properties of C$^*$-algebras are separably inheritable such as exactness, nuclearity, simplicity, real rank zero, and stable rank one, to name a few.  Also, the meet of countably many separably inheritable properties is separably inheritable.  See Section II.8.5 of \cite{Blackadar:Encyclopedia} for proofs of these facts and for many more examples of separably inheritable properties.

The following slight variation of separable inheritability will be useful.

\begin{definition}\label{defn:Separably}
Let $(P)$ be a property of separable C$^*$-algebras.  A C$^*$-algebra $A$ \emph{separably satisfies $(P)$} if whenever $A_0$ is a separable C$^*$-subalgebra of $A$, there is a separable C$^*$-subalgebra $\hat{A}$ of $A$ which satisfies $(P)$ and contains $A_0$.
\end{definition}

Note that if $(P)$ is a separably inheritable property of C$^*$-algebras and $A$ is a C$^*$-algebra satisfying $(P)$, then $A$ separably satisfies $(P)$.  Note also that if $(P)$ is a property of separable C$^*$-algebras preserved under sequential inductive limits with injective connecting maps, then separably $(P)$ is a separably inheritable property.

The following is analogous to II.8.5.3 in \cite{Blackadar:Encyclopedia}, and the same proof holds here.

\begin{proposition}
Let $(P_i)$ be a countable family of properties of separable $C^*$-algebras preserved under sequential inductive limits with injective connecting maps.  If $A$ is a $C^*$-algebra separably satisfying $(P_i)$ for each $i$, then $A$ separably satisfies the meet of the $(P_i)$.
\end{proposition}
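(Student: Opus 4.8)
The plan is to run a standard interleaving construction, building an increasing chain of separable subalgebras that repeatedly absorbs each of the properties, and then to recognize its closure as a sequential inductive limit \emph{along each property separately}.

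First I would fix a separable C*-subalgebra $A_0 \subseteq A$; the goal is to produce a separable C*-subalgebra $\hat{A}$ with $A_0 \subseteq \hat{A} \subseteq A$ satisfying every $(P_i)$ simultaneously. I would choose an enumeration $i_1, i_2, i_3, \dots$ of the index set in which each index occurs infinitely often, for instance the sequence $1,\,1,2,\,1,2,3,\,1,2,3,4,\,\dots$. Then I would construct the chain recursively: set $B_0 = A_0$, and given a separable subalgebra $B_{n-1}$, use the hypothesis that $A$ separably satisfies $(P_{i_n})$ to choose a separable C*-subalgebra $B_n$ with $B_{n-1} \subseteq B_n \subseteq A$ such that $B_n$ satisfies $(P_{i_n})$. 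This yields an increasing chain $B_0 \subseteq B_1 \subseteq \cdots$ of separable C*-subalgebras of $A$; set $\hat{A} = \overline{\bigcup_n B_n}$, which is a separable C*-subalgebra of $A$ containing $A_0$.

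To finish, fix an index $i$ and let $n_1 < n_2 < \cdots$ be the infinitely many positions at which $i$ occurs in the enumeration. Each $B_{n_k}$ satisfies $(P_i)$, and $B_{n_1} \hookrightarrow B_{n_2} \hookrightarrow \cdots$ is a sequential inductive system with injective (inclusion) connecting maps. Since this subsequence is cofinal in the chain, $\bigcup_k B_{n_k} = \bigcup_n B_n$, so the inductive limit of the subsystem is canonically identified with $\overline{\bigcup_k B_{n_k}} = \hat{A}$. By the assumed stability of $(P_i)$ under sequential inductive limits with injective connecting maps, $\hat{A}$ satisfies $(P_i)$. As $i$ was arbitrary, $\hat{A}$ satisfies every $(P_i)$, hence their meet, and therefore $A$ separably satisfies the meet of the $(P_i)$.

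The one point requiring care --- and the reason each index must recur infinitely often rather than appear a single time --- is that $(P_{i_n})$ holding at an individual stage $B_n$ need not persist to the limit; persistence is secured only by exhibiting $\hat{A}$ as an inductive limit of a cofinal subsequence all of whose terms satisfy $(P_i)$, which forces each property to be absorbed cofinally. No deeper obstacle arises: this is precisely the diagonal argument of II.8.5.3 in \cite{Blackadar:Encyclopedia}.
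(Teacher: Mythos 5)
Your proof is correct and is exactly the argument the paper intends: the paper simply asserts that the proof of II.8.5.3 in \cite{Blackadar:Encyclopedia} carries over, and that proof is precisely your interleaving construction, with each property absorbed cofinally along the chain and recovered at the limit via preservation under sequential inductive limits with injective connecting maps. Your closing remark about why each index must recur infinitely often correctly identifies the one point where a naive one-pass construction would fail.
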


The following result will be crucial for $KK$-theoretic considerations related to the trace-kernel extension defined in Section \ref{sec:TraceKernel}.

\begin{proposition}\label{prop:SeparableExtension}
Consider an extension
\[ \begin{tikzcd} 0 \arrow{r} & I \arrow{r}{j} & E \arrow{r}{q} & D \arrow{r} & 0 \end{tikzcd} \]
of $C^*$-algebras and suppose for each $X \in \{I, E, D \}$, $(P_X)$ is a property of separable $C^*$-algebras preserved under sequential inductive limits with injective connecting maps and $X$ separably satisfies $(P_X)$.  If for each $X \in \{ I, E, D \}$, a separable $C^*$-subalgebra $X_0$ of $X$ is given, then for each $X \in \{I, E, D\}$, there is a separable $C^*$-subalgebra $\hat{X}$ of $X$ which satisfies $(P_X)$ and contains $X_0$ and such that there is a homomorphism
\[ \begin{tikzcd}
   0 \arrow{r} & \hat{I} \arrow{r} \arrow{d} & \hat{E} \arrow{r} \arrow{d} & \hat{D} \arrow{r} \arrow{d} & 0 \\
   0 \arrow{r} & I \arrow{r}{j} & E \arrow{r}{q} & D \arrow{r} & 0
\end{tikzcd} \]
of extensions where the vertical arrows are the inclusion maps.
\end{proposition}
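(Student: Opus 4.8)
The plan is to construct $\hat I, \hat E, \hat D$ by an iterative ``swelling'' argument in the spirit of the proof of II.8.5.3 in \cite{Blackadar:Encyclopedia}, building increasing sequences of separable subalgebras $I_0 \subseteq I_1 \subseteq \cdots \subseteq I$, $E_0 \subseteq E_1 \subseteq \cdots \subseteq E$, and $D_0 \subseteq D_1 \subseteq \cdots \subseteq D$ (seeded by the given subalgebras, with $E_0$ enlarged to contain $j(I_0)$ and $D_0$ to contain $q(E_0)$) and then setting $\hat X = \overline{\bigcup_n X_n}$. For the limit to have the desired form I must simultaneously secure, in the limit, five conditions: (i) each $\hat X$ satisfies $(P_X)$; (ii) $j(\hat I) \subseteq \hat E$; (iii) $\hat E \cap I \subseteq j(\hat I)$; (iv) $q(\hat E) \subseteq \hat D$; and (v) $\hat D \subseteq q(\hat E)$. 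Conditions (ii)--(v) together say exactly that the top row $0 \to \hat I \to \hat E \to \hat D \to 0$ is exact, with $j(\hat I) = \hat E \cap I = \ker(q|_{\hat E})$; and since $I$ is an ideal of $E$, the intersection $\hat E \cap I$ is automatically an ideal of $\hat E$, so this is a genuine extension.

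Each stage of the iteration is a single ``round'' turning $(I_n, E_n, D_n)$ into $(I_{n+1}, E_{n+1}, D_{n+1})$. First I would enlarge $I_n$ to a separable subalgebra containing $j^{-1}(E_n \cap I)$ and, invoking that $I$ separably satisfies $(P_I)$ (Definition \ref{defn:Separably}), further enlarge it to a separable subalgebra $I_{n+1}$ satisfying $(P_I)$. Next I would enlarge $E_n$ to a separable subalgebra containing $j(I_{n+1})$ together with a norm-controlled lift under $q$ of each element of a fixed countable dense subset of $D_n$, and again enlarge to obtain $E_{n+1}$ satisfying $(P_E)$. Finally I would enlarge $D_n$ to contain $q(E_{n+1})$ and enlarge once more to obtain $D_{n+1}$ satisfying $(P_D)$. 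Every enlargement remains separable, since the adjoined sets (intersections with the separable $E_n$, images under $q$, and countably many lifts) are separable.

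Taking limits, conditions (i), (ii), (iv), (v) are routine bookkeeping. For (i), the subalgebras $X_n$ with $n \geq 1$ all satisfy $(P_X)$ and form a cofinal subsystem with injective connecting maps whose limit is $\hat X$, so $\hat X$ satisfies $(P_X)$ by hypothesis; condition (ii) holds since $j(I_{n+1}) \subseteq E_{n+1}$; and (iv), (v) hold since $q(E_n) \subseteq D_{n+1}$ while $D_n \subseteq q(E_{n+1})$ (the latter because $E_{n+1}$ contains lifts of a dense subset of $D_n$ and $q(E_{n+1})$ is already closed). The one genuinely delicate point, and the step I expect to be the main obstacle, is condition (iii), exactness at $\hat E$. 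The interleaving $E_n \cap I \subseteq j(I_{n+1}) \subseteq E_{n+1} \cap I$ gives $j(\hat I) = \overline{\bigcup_n j(I_n)} = \overline{\bigcup_n (E_n \cap I)}$, and clearly $\overline{\bigcup_n (E_n \cap I)} \subseteq \hat E \cap I$; the content is the reverse inclusion, which does not follow from naive approximation since approximants to $y \in \hat E \cap I$ from $\bigcup_n E_n$ need not lie in $I$. To prove it I would take $y \in \hat E \cap I$ and approximants $e_m \in E_{N_m}$ with $e_m \to y$; as $q(y) = 0$ we get $\|q(e_m)\| = \|q(e_m - y)\| \to 0$. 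Lifting $q(e_m)$ back to $z_m \in E_{N_m}$ with $\|z_m\| = \|q(e_m)\|$ via the quotient map $q|_{E_{N_m}} : E_{N_m} \twoheadrightarrow q(E_{N_m})$, the corrected elements $e_m - z_m$ lie in $E_{N_m} \cap I$ and satisfy $\|y - (e_m - z_m)\| \leq \|y - e_m\| + \|q(e_m)\| \to 0$, so $y \in \overline{\bigcup_n (E_n \cap I)} = j(\hat I)$. This establishes (iii) and, with the preceding points, completes the construction of the morphism of extensions.
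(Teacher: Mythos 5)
Your proposal is correct and follows essentially the same route as the paper: an inductive interleaving construction producing increasing separable subalgebras $I_n$, $E_n$, $D_n$ satisfying (up to an index shift) $q(E_n) \subseteq D_{n+1} \subseteq q(E_{n+1})$ and $j^{-1}(E_n) \subseteq I_{n+1} \subseteq j^{-1}(E_{n+1})$, each term secured to satisfy the relevant property via separable inheritability, followed by taking closed unions. The only difference is presentational: the paper asserts $j^{-1}(\hat{E}) = \hat{I}$ ``by construction,'' whereas you explicitly supply the norm-controlled lifting argument showing $\hat{E} \cap j(I) \subseteq j(\hat{I})$ --- the genuinely delicate closure point, which the paper leaves implicit.
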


\begin{proof}
For $X \in \{I, E, D\}$, we construct an increasing sequence of separable C$^*$-subalgebras $(X_n)_{n=1}^\infty$ of $X$ containing $X_0$ such that for each $n \geq 1$, $X_n$ satisfies $(P_X)$,
\[ q(E_{n-1}) \subseteq D_n \subseteq q(E_n), \quad \text{and} \quad  I_{n-1} \subseteq j^{-1}(E_n) \subseteq I_n. \]
Assuming this has been done, for each $X \in \{ I, E, D \}$, let $\hat{X}$ be the closed union of the $X_n$ and note that $\hat{X}$ is a separable C$^*$-subalgebra of $X$.  As each $X_n$ satisfies $(P_X)$, so does $\hat{X}$ by hypothesis.  By construction, $q(\hat{E}) = \hat{D}$, and $j^{-1}(\hat{E}) = \hat{I}$, so the result follows.

We construct the desired C$^*$-subalgebras $I_n$, $E_n$, and $D_n$ inductively starting from the given C$^*$-subalgebras $I_0$, $D_0$, and $E_0$.  Assume $n \geq 1$ and $I_{n-1}$, $D_{n-1}$, and $E_{n-1}$ have been constructed.  The C$^*$-subalgebra of $D$ generated by $D_{n-1}$ and $q(E_{n-1})$ is separable, and hence there is a separable C$^*$-subalgebra $D_n$ of $D$ which satisfies $(P_D)$ and contains both $D_{n-1}$ and $q(E_{n-1})$.  Fix a countable, dense set $T_n \subseteq D_n$ and let $S_n \subseteq E$ be a countable set with $q(S_n) = T_n$.  Then the C$^*$-subalgebra of $E$ generated by $j(I_{n-1})$, $E_{n-1}$, and $S_n$ is separable, and hence there is a separable C$^*$-subalgebra $E_n$ of $E$ which satisfies $(P_E)$ and contains $j(I_{n-1})$, $E_{n-1}$, and $S_n$.  Then $T_n = q(S_n) \subseteq q(E_n)$ since $S_n \subseteq E_n$, and as $T_n$ is dense in $D_n$ and the $^*$-homomorphism $q|_{E_n}$ has closed range, $D_n \subseteq q(E_n)$.  Also, as $j(I_{n-1}) \subseteq E_n$ and $j$ is injective, $I_{n-1} \subseteq j^{-1}(E_n)$.  Finally, as $j^{-1}(E_n)$ is a separable C$^*$-subalgebra of $I$, there is a separable C$^*$-subalgebra $I_n$ of $I$ which satisfies $(P_I)$ and contains $j^{-1}(E_n)$.  This completes the construction.
\end{proof}

\begin{corollary}\label{cor:SeparableExtensionPermenence}
Let $(P)$ be a property of separable $C^*$-algebras preserved under sequential inductive limits with injective connecting maps.  If $(P)$ is preserved by ideals, quotients, or extensions of separable $C^*$-algebras, then separably $(P)$ has the same permanence property among all $C^*$-algebras.
\end{corollary}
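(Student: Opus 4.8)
The plan is to treat the three permanence properties separately, since the phrase ``ideals, quotients, or extensions'' bundles three independent implications; in each case I start from a separable C*-subalgebra of the target algebra and use the hypothesis that the relevant ambient algebra(s) separably satisfy $(P)$ to engulf it in a separable subalgebra satisfying $(P)$. The ideal and quotient cases are direct consequences of the definition of separably $(P)$, while the extension case is precisely what Proposition \ref{prop:SeparableExtension} is designed for.

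For the ideal case, suppose $A$ is separably $(P)$ and $I \trianglelefteq A$; given a separable $I_0 \subseteq I$, I would view $I_0$ as a separable subalgebra of $A$ and use that $A$ is separably $(P)$ to find a separable $\hat{A} \subseteq A$ satisfying $(P)$ with $I_0 \subseteq \hat{A}$. Then $\hat{A} \cap I$ is a separable ideal of $\hat{A}$ containing $I_0$, and since $(P)$ is preserved by ideals of separable C*-algebras it satisfies $(P)$; one takes $\hat{I} = \hat{A} \cap I$. For the quotient case, suppose $A$ is separably $(P)$ and $q : A \to A/I$ is a quotient map; given a separable $(A/I)_0 \subseteq A/I$, I would lift a countable dense subset of $(A/I)_0$ to $A$ and let $A_0$ be the separable subalgebra they generate, so that $q(A_0)$, being the closed image of a C*-algebra, contains $(A/I)_0$. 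Enlarging $A_0$ to a separable $\hat{A}$ satisfying $(P)$, the image $q(\hat{A}) \cong \hat{A}/(\hat{A} \cap I)$ is a separable quotient of $\hat{A}$ containing $(A/I)_0$, hence satisfies $(P)$ by quotient-preservation; one takes $\widehat{A/I} = q(\hat{A})$.

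The extension case carries the real content. Given an extension $0 \to I \to E \to D \to 0$ with $I$ and $D$ separably $(P)$ and a separable $E_0 \subseteq E$, I would apply Proposition \ref{prop:SeparableExtension} with $(P_I) = (P_D) = (P)$ and $(P_E)$ the trivial property, which is vacuously preserved under sequential inductive limits and vacuously separably satisfied by $E$; the prescribed subalgebras are taken to be $E_0$ inside $E$ and the zero subalgebra inside $I$ and $D$. This produces a morphism of extensions whose top row $0 \to \hat{I} \to \hat{E} \to \hat{D} \to 0$ has separable terms with $\hat{I}$ and $\hat{D}$ satisfying $(P)$ and $E_0 \subseteq \hat{E}$. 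Since $(P)$ is preserved by extensions of separable C*-algebras, $\hat{E}$ satisfies $(P)$, and it is the required enveloping subalgebra.

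The one point requiring care, and the step I would regard as the crux, is confirming that the top row really is a short exact sequence so that extension-preservation applies. This is immediate from the conditions $q(\hat{E}) = \hat{D}$ and $j^{-1}(\hat{E}) = \hat{I}$ guaranteed by Proposition \ref{prop:SeparableExtension}: surjectivity onto $\hat{D}$ is the first identity, injectivity of $\hat{I} \hookrightarrow \hat{E}$ is inherited from $j$, and exactness in the middle follows from $\ker(q|_{\hat{E}}) = \hat{E} \cap I = \hat{I}$. Beyond this verification the argument is routine, so the corollary amounts to packaging Proposition \ref{prop:SeparableExtension} together with the two elementary intersection/image constructions above.
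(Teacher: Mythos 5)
Your proof is correct and, in the extension case, is exactly the paper's argument: apply Proposition \ref{prop:SeparableExtension} with $(P)$ imposed on the ideal and quotient (and only a trivial requirement on the middle algebra), then use that the resulting top row is a genuine extension of separable C*-algebras to conclude $\hat{E}$ satisfies $(P)$ by extension-preservation. The paper dismisses the ideal and quotient cases as ``similar'' without proof, and your direct intersection/image arguments ($\hat{I} = \hat{A} \cap I$ and $\widehat{A/I} = q(\hat{A}) \cong \hat{A}/(\hat{A}\cap I)$) are a correct and appropriately elementary way to fill them in.
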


\begin{proof}
We only consider the case of extensions as the other two results are similar.  Let $(P)$ be a property of separable C$^*$-algebras preserved by extensions.  Suppose $I$ is an ideal of a C$^*$-algebra $A$ such that $I$ and $A / I$ both separably satisfy $(P)$ and suppose $A_0$ is a separable C$^*$-subalgebra of $A$.  By Proposition \ref{prop:SeparableExtension}, there are separable C$^*$-subalgebras $\hat{I}$, $\hat{A}$, and $\hat{B}$ of $I$, $A$, and $A / I$, respectively, such that $\hat{I}$ and $\hat{B}$ satisfy $(P)$, $\hat{A}$ contains $A_0$, and there is a homomorphism
\[ \begin{tikzcd}
   0 \arrow{r} & \hat{I} \arrow{r} \arrow{d} & \hat{A} \arrow{r} \arrow{d} & \hat{B} \arrow{r} \arrow{d} & 0 \\
   0 \arrow{r} & I \arrow{r} & A \arrow{r} & A / I \arrow{r} & 0
\end{tikzcd} \]
of extensions where the vertical arrows are the inclusion maps.  Now $\hat{A}$ is a separable C$^*$-subalgebra of $A$ satisfying $(P)$ and containing $A_0$.
\end{proof}

The method for reducing to separable C$^*$-algebras given here also behaves well with hereditary subalgebras.

\begin{proposition}\label{prop:SeparableHereditaryPermenance}
If $(P)$ is a property of separable $C^*$-algebras preserved by hereditary subalgebras, then the property separably $(P)$ is preserved by hereditary subalgebras.
\end{proposition}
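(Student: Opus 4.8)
The plan is to exploit the fact that $A$ separably satisfies $(P)$ by producing, for a given separable subalgebra of $C$, a witnessing separable subalgebra of $A$ and then cutting it down to $C$ via intersection. The one structural fact that makes this work is that intersecting a hereditary subalgebra with an arbitrary C$^*$-subalgebra produces something hereditary in the latter.

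Concretely, suppose $A$ separably satisfies $(P)$ and $C$ is a hereditary C$^*$-subalgebra of $A$. Given a separable C$^*$-subalgebra $C_0 \subseteq C$, I regard $C_0$ as a separable C$^*$-subalgebra of $A$. Since $A$ separably satisfies $(P)$, there is a separable C$^*$-subalgebra $D \subseteq A$ satisfying $(P)$ with $C_0 \subseteq D$. I then claim $\hat{C} := D \cap C$ is the desired subalgebra of $C$. It is a C$^*$-subalgebra as the intersection of two C$^*$-subalgebras, it is separable as a subalgebra of the separable algebra $D$, it contains $C_0$ (since $C_0 \subseteq D$ and $C_0 \subseteq C$), and it is contained in $C$.

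It remains to verify that $\hat{C}$ satisfies $(P)$, and for this I would show $D \cap C$ is a hereditary C$^*$-subalgebra of $D$. Indeed, if $x \in D^+$ and $y \in (D \cap C)^+$ with $x \leq y$, then $x \in A^+$, $y \in C^+$, and $x \leq y$, so the heredity of $C$ in $A$ forces $x \in C$, whence $x \in D \cap C$. Thus $D \cap C$ is hereditary in $D$. Since $D$ is separable and satisfies $(P)$, and $(P)$ is preserved by hereditary subalgebras, $\hat{C} = D \cap C$ satisfies $(P)$, completing the argument.

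There is no genuine obstacle here; the only point requiring care is the verification that $D \cap C$ is hereditary in $D$, which is immediate from the definition of heredity once one observes that the order relation $x \leq y$ and positivity are inherited from $A$. Everything else is a routine bookkeeping of separability and containments.
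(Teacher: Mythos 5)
Your proof is correct and follows the paper's strategy exactly: obtain a separable witness $D \subseteq A$ satisfying $(P)$ and containing $C_0$, cut it down to a separable hereditary subalgebra of $D$ sitting inside $C$, and invoke the preservation of $(P)$ under hereditary subalgebras of separable C$^*$-algebras. The only difference is in the cut-down: the paper takes the hereditary subalgebra $\overline{C_0 D C_0}$ of $D$ generated by $C_0$, which lies in $C$ because $C$ is hereditary in $A$, whereas you take $D \cap C$ and check heredity directly; both verifications are equally routine.
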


\begin{proof}
Suppose $A$ is a C$^*$-algebra separably satisfying $(P)$ and $B \subseteq A$ is a hereditary subalgebra.  Let $B_0$ be a separable C$^*$-subalgebra of $B$.  There is a separable C$^*$-subalgebra $\hat{A}$ of $A$ which satisfies $(P)$ and contains $B_0$.  Now, $\hat{B} := B_0 \hat{A} B_0$ is a separable C$^*$-subalgebra of $B$ containing $B_0$.  Moreover, $\hat{B}$ is a hereditary subalgebra of $\hat{A}$ whence satisfies $(P)$.
\end{proof}

The following result will be used heavily in Section \ref{sec:UltraproductClassification}.

\begin{proposition}\label{prop:SeparableCorestriction}
Suppose $A$ and $B$ are $C^*$-algebras such that $A$ is separable and $B$ is unital.  If $\varphi : A \rightarrow B$ is a full, nuclear $^*$-homomorphism, there is a separable, unital $C^*$-subalgebra $B_0$ of $B$ such that $\varphi(A) \subseteq B_0$ and the corestriction of $\varphi$ to $B_0$ is full and nuclear.
\end{proposition}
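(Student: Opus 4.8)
The plan is to build $B_0$ by adjoining to $\varphi(A)$ and the unit $1_B$ two pieces of countable data, one witnessing nuclearity of the corestriction and one witnessing its fullness, and then to check that each property survives. Since $A$ is separable, all the relevant approximating families can be taken to be sequences, and the key point throughout is that a unital C$^*$-subalgebra $B_0 \subseteq B$ with the same unit has the same norm as $B$ and is inverse-closed in $B$.

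For nuclearity I would first fix a sequence of completely positive, contractive maps $A \xrightarrow{\alpha_n} \mathbb{M}_{k_n} \xrightarrow{\beta_n} B$ with $\beta_n \alpha_n \rightarrow \varphi$ in the point-norm topology, which exists because $\varphi$ is nuclear and $A$ is separable. The range of each $\beta_n$ is the linear span of the images of the matrix units, so the set $N = \{ \beta_n(e^{(n)}_{ij}) : n, i, j \}$ is countable. Once $N \subseteq B_0$, each $\beta_n$ corestricts to a completely positive, contractive map into $B_0$, the compositions $\beta_n \alpha_n$ are unchanged, and as the norm of $B_0$ agrees with that of $B$ the convergence $\beta_n \alpha_n \rightarrow \varphi$ persists. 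Hence the corestriction $A \rightarrow B_0$ is nuclear; this step is routine.

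The main work, and the expected obstacle, is fullness: it must be verified at each of the uncountably many nonzero elements of $A$, whereas $B_0$ can only absorb countably many witnesses. The resolution is a Lindel\"{o}f argument. Since $\varphi$ is full and $B$ is unital, $\varphi$ is injective (an element mapping to $0$ could not generate $B$), so $X := \varphi(A) \setminus \{0\}$ is exactly the image of the nonzero elements of $A$. For each $x \in X$, fullness gives $1_B \in \overline{B x B}$, and since $\overline{BxB}$ is the closed linear span of $\{ bxc : b, c \in B\}$, there are finitely many $b^x_i, c^x_i \in B$ with $\| 1_B - \sum_i b^x_i x c^x_i \| < 1$. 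The set $U_x = \{ y \in X : \| 1_B - \sum_i b^x_i y c^x_i \| < 1 \}$ is then a relatively open neighbourhood of $x$ in $X$. As $A$ is separable, $\varphi(A)$ and hence $X$ is a separable metric space, therefore second countable and Lindel\"{o}f, so the open cover $\{ U_x \}_{x \in X}$ admits a countable subcover $\{ U_{x_k} \}_{k=1}^\infty$. Let $W$ be the countable set of all elements $b^{x_k}_i, c^{x_k}_i$ arising from this subcover.

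Finally I would set $B_0 = C^*\big( \varphi(A) \cup N \cup W \cup \{ 1_B \} \big)$, a separable, unital C$^*$-subalgebra of $B$ with unit $1_B$. Nuclearity of the corestriction holds as above. For fullness, given a nonzero $a \in A$, the element $x = \varphi(a)$ lies in some $U_{x_k}$, so $\sum_i b^{x_k}_i x c^{x_k}_i \in B_0$ satisfies $\| 1_B - \sum_i b^{x_k}_i x c^{x_k}_i \| < 1$ and is therefore invertible in $B_0$ by spectral permanence for the unital subalgebra $B_0 \subseteq B$. Thus the ideal of $B_0$ generated by $\varphi(a)$ contains an invertible element and equals $B_0$, so the corestriction $A \rightarrow B_0$ is full. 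The only delicate point is the passage from the pointwise fullness hypothesis to a countable family of witnesses, which the Lindel\"{o}f property of the separable metric space $X$ supplies.
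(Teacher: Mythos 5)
Your proof is correct, and the nuclearity half coincides with the paper's argument; for fullness, however, you take a genuinely different route. The paper works on the domain side, through the ideal lattice of $A$: it invokes Proposition II.8.5.7 of Blackadar's book, which provides a sequence $(a_n)$ of nonzero elements of the separable C$^*$-algebra $A$ such that every nonzero ideal of $A$ contains some $a_n$; it then adjoins to $B_0$ exact fullness witnesses $\sum_i b_{n,i}\varphi(a_n)b'_{n,i} = 1_B$ for these countably many elements only, and for an arbitrary nonzero $a \in A$ it passes to the ideal $\varphi^{-1}(I)$ of $A$ (where $I$ is the ideal of $B_0$ generated by $\varphi(a)$), which is nonzero because it contains $a$, hence contains some $a_n$, forcing $I = B_0$. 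You instead work on the image side: you note that fullness forces $\varphi$ to be injective, cover the separable metric space $X = \varphi(A)\setminus\{0\}$ by the relatively open sets on which a fixed finite witness family works up to error $<1$, extract a countable subcover by the Lindel\"{o}f property, and close the argument with inverse-closedness of the unital subalgebra $B_0$. Both are countability arguments and both are sound. Yours is more self-contained, requiring only point-set topology and spectral permanence, and it makes explicit the useful fact that approximate fullness witnesses are stable under small perturbations of the element. The paper's is shorter modulo the cited result, needs neither injectivity nor metrizability, and produces a countable test family $(a_n)$ depending only on $A$ and not on $\varphi$ or $B$, so the same sequence serves simultaneously for every full map out of $A$ --- which is in the spirit of the other separabilization arguments of Section 1 of the paper.
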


\begin{proof}
As $A$ is separable and $\varphi$ is nuclear, for each integer $n \geq 1$, there are an integer $d(n) \geq 1$ and completely positive maps $\theta_n : A \rightarrow \mathbb{M}_{d(n)}$ and $\rho_n : \mathbb{M}_{d(n)} \rightarrow B$ such that
\[ \| \rho_n(\theta_n(a)) - \varphi(a) \| \rightarrow 0 \]
for all $a \in A$.

By Proposition II.8.5.7 of \cite{Blackadar:Encyclopedia}, there is a sequence $(a_n)_{n=1}^\infty \subseteq A \setminus \{0\}$ such that for every non-zero ideal $I \subseteq A$, there is an $n \geq 1$ with $a_n \in I$.  As $\varphi$ is full and $B$ is unital, for each $n \geq 1$, there are an integer $k(n) \geq 1$ and elements $b_{n, i}, b'_{n, i} \in B$ for $i = 1, \ldots, k(n)$ such that $\sum_{i=1}^{k(n)} b_{n, i} \varphi(a_n) b'_{n, i} = 1_B$.

Let $B_0$ denote the C$^*$-subalgebra of $B$ generated by $\varphi(A)$, $\rho_n(\mathbb{M}_{d(n)})$, $b_{n, i}$, and $b'_{n, i}$ for $i = 1, \ldots, k(n)$ and $n \geq 1$.  Then $B_0$ is separable, and if $\varphi_0$ is the corestriction of $\varphi$ to $B_0$, then $\varphi_0$ is nuclear.  Suppose $a \in A \setminus \{0\}$ and let $I$ denote the ideal of $B_0$ generated by $\varphi(a)$.  Then $\varphi^{-1}(I)$ is an ideal in $A$ which is non-zero as $a \in \varphi^{-1}(I)$, and hence there is an integer $n \geq 1$ such that $a_n \in \varphi^{-1}(I)$.  Now, $\varphi(a_n) \in I$, and since $\varphi(a_n)$ is full in $B_0$ by construction, $I = B_0$.  This shows that $\varphi_0$ is full.
\end{proof}

A version of the following result appeared in an early version of \cite{Schafhauser:Crelle}.

\begin{proposition}\label{prop:NonseparableHomGroups}
Suppose $G$ is a countable, abelian group and $A$ is a $C^*$-algebra.  For $i = 0, 1$, the natural group homomorphisms
\begin{align*}
\underset{\longrightarrow}{\lim} \, \mathrm{Hom}_\mathbb{Z}(G, K_i(A_0)) &\longrightarrow \mathrm{Hom}_\mathbb{Z}(G, K_i(A))
\intertext{and}
\underset{\longrightarrow}{\lim} \, \mathrm{Ext}_\mathbb{Z}^1(G, K_i(A_0)) &\longrightarrow \mathrm{Ext}_\mathbb{Z}^1(G, K_i(A))
\end{align*}
are isomorphisms where the limit is taken over all separable $C^*$-subalgebras $A_0$ of $A$.
\end{proposition}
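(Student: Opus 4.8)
The plan is to reduce everything to two standard facts: that $K$-theory is continuous, so that $K_i(A) = \underset{\longrightarrow}{\lim}\, K_i(A_0)$ with the colimit running over the separable C$^*$-subalgebras $A_0 \subseteq A$, and that this index poset is \emph{countably filtered}, meaning every countable collection of separable subalgebras admits a common separable upper bound. The latter holds because the C$^*$-subalgebra generated by a countable union of separable C$^*$-subalgebras is again separable. It is precisely this countable filteredness---rather than mere directedness---that makes the statement true, since $\mathrm{Hom}_\mathbb{Z}(G,-)$ fails to commute with general filtered colimits when $G$ is not finitely presented.

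The main step is the following lemma, which I would prove first and which immediately yields the $\mathrm{Hom}$ assertion. Let $(M_\lambda)$ be a directed system of abelian groups indexed by a countably filtered poset, with colimit $M = \underset{\longrightarrow}{\lim}\, M_\lambda$, and let $G'$ be any countable abelian group; then the natural map $\underset{\longrightarrow}{\lim}\, \mathrm{Hom}_\mathbb{Z}(G', M_\lambda) \to \mathrm{Hom}_\mathbb{Z}(G', M)$ is an isomorphism. For injectivity, I would take a homomorphism $f : G' \to M_\lambda$ whose composite to $M$ vanishes; since $G'$ has countably many generators and $f$ sends each generator to an element of $M_\lambda$ vanishing in $M$, hence already in some $M_\mu$, countable filteredness produces a single $\mu \geq \lambda$ killing all of them, so $f$ becomes zero in $M_\mu$. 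For surjectivity, I would present $G'$ as $F/R$ with $F$ and $R$ free abelian of countable rank, lift the images of the free generators of $F$ into a common $M_\lambda$ (again using countable filteredness), and then use that the countably many relations $r \in R$ map to $0$ in $M$ to find, after enlarging $\lambda$ once more, a term in which the lift annihilates $R$ and therefore descends to a homomorphism $G' \to M_\mu$ representing the given class. Applying this with $G' = G$ and $M_\lambda = K_i(A_0)$ settles the first isomorphism.

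For the $\mathrm{Ext}^1$ statement I would fix a free resolution $0 \to R \to F \to G \to 0$ with $F$ and $R$ free abelian of countable rank. Applying $\mathrm{Hom}_\mathbb{Z}(-, M)$ and using $\mathrm{Ext}^1_\mathbb{Z}(F, M) = 0$ yields the natural exact sequence
\[ 0 \to \mathrm{Hom}_\mathbb{Z}(G, M) \to \mathrm{Hom}_\mathbb{Z}(F, M) \to \mathrm{Hom}_\mathbb{Z}(R, M) \to \mathrm{Ext}^1_\mathbb{Z}(G, M) \to 0. \]
Since filtered colimits of abelian groups are exact, taking $\underset{\longrightarrow}{\lim}$ over $M = K_i(A_0)$ preserves this sequence, and it maps naturally to the corresponding sequence for $M = K_i(A)$. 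The three maps on the $\mathrm{Hom}$-terms are isomorphisms by the lemma above (each of $G$, $F$, and $R$ is countable), so the five lemma forces the map on the $\mathrm{Ext}^1$-terms to be an isomorphism as well.

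The step I expect to require the most care is the surjectivity half of the lemma: because $G'$ may fail to be finitely presented, one must accommodate infinitely many relations simultaneously, and this is exactly where countable filteredness of the poset of separable subalgebras is indispensable. The remaining points---continuity of $K$-theory and exactness of filtered colimits---are standard, and naturality of the comparison maps needs only a routine check.
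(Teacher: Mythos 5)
Your proposal is correct, and it reorganizes the argument in a genuinely different way from the paper, so a comparison is worthwhile. Both proofs rest on the same two pillars---a countable free resolution $0 \to R \to F \to G \to 0$ plus exactness of filtered colimits, and the fact that $K$-theoretic data over $A$ is witnessed inside separable subalgebras---but they distribute the work differently. The paper reduces \emph{both} the $\mathrm{Hom}$ and the $\mathrm{Ext}^1$ statements, via the four-term exact sequence and an (implicit) five-lemma chase, to the single claim that $\underset{\longrightarrow}{\lim}\, \mathrm{Hom}_\mathbb{Z}(\mathbb{Z}X, K_i(A_0)) \to \mathrm{Hom}_\mathbb{Z}(\mathbb{Z}X, K_i(A))$ is an isomorphism for countable sets $X$, and then proves this free-group case concretely: surjectivity by writing $f(x) = [p_x] - [q_x]$ and passing to the separable subalgebra generated by the entries of these projections, injectivity by adjoining the entries of partial isometries witnessing the Murray--von Neumann equivalences. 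In effect, the paper never cites continuity of $K$-theory for the (uncountable) directed system of separable subalgebras; it re-proves exactly the instance it needs, and because $\mathbb{Z}X$ is free it only ever has to handle generators. You instead prove the $\mathrm{Hom}$ statement for \emph{all} countable abelian groups at once, by isolating a purely algebraic lemma about countably filtered colimits and importing the $C^*$-algebraic content through continuity of $K$-theory as a black box; the resolution and the five lemma then enter only for $\mathrm{Ext}^1$. What your route buys is modularity and transparency about the precise mechanism: countable groups are $\aleph_1$-presentable, so $\mathrm{Hom}_\mathbb{Z}(G,-)$ commutes with $\aleph_1$-filtered colimits, and your remark that mere directedness would not suffice for non-finitely-presented $G$ identifies exactly why the poset of separable subalgebras is the right index set. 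What the paper's route buys is self-containedness at the $K$-theory level and a single diagram chase covering both functors. It is worth noticing that the two key steps are the same mechanism at different levels of abstraction: your ``lift the countably many generators, then kill the countably many relations at a common upper bound'' is precisely what the paper's projection/partial-isometry argument carries out concretely, with projections playing the role of generators and partial isometries the role of witnesses that relations vanish.
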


\begin{proof}
As $G$ is a countable, abelian group, there is an extension
\[ \begin{tikzcd} 0 \arrow{r} & \mathbb{Z}X \arrow{r} & \mathbb{Z}Y \arrow{r} & G \arrow{r} & 0 \end{tikzcd} \]
for countable sets $X$ and $Y$ where $\mathbb{Z}X$ and $\mathbb{Z}Y$ denote the free abelian groups generated by $X$ and $Y$, respectively.  For every C$^*$-algebra $B$ and for $i = 0, 1$, there is a natural exact sequence
\[ \begin{tikzcd}[column sep = small] \mathrm{Hom}_\mathbb{Z}(G, K_i(B)) \arrow[tail]{r} & \mathrm{Hom}_\mathbb{Z}(\mathbb{Z}Y, K_i(B)) \arrow{r} & \mathrm{Hom}_\mathbb{Z}(\mathbb{Z}X, K_i(B)) \arrow[two heads]{r} & \mathrm{Ext}_\mathbb{Z}^1(G, K_i(B)). \end{tikzcd} \]
As inductive limits preserve exact sequences, it is enough to prove that the natural map
\[ \underset{\longrightarrow}{\lim} \, \mathrm{Hom}_\mathbb{Z}(\mathbb{Z}X, K_i(A_0)) \longrightarrow \mathrm{Hom}_\mathbb{Z}(\mathbb{Z}X, K_i(A)) \]
is an isomorphism for each countable set $X$.

Adding a unit to $A$ if necessary, we may assume $A$ is unital.  We only prove the case when $i = 0$ as the case $i = 1$ then follows by Bott periodicity (or by a similar argument).

To show surjectivity, let $f : \mathbb{Z} X \rightarrow K_0(A)$ be given.  For each $x \in X$, there are an integer $n(x) \geq 1$ and projections $p_x$ and $q_x$ in $\mathbb{M}_{n(x)}(A)$ with $f(x) = [p_x] - [q_x]$.  If $A_0$ denotes the unital C$^*$-subalgebra of $A$ generated by the entries of the projections $p_x$ and $q_x$, then there is a group homomorphism $f_0 : \mathbb{Z}X \rightarrow K_0(A_0)$ given by $f_0(x) = [p_x] - [q_x]$.  If $\iota_0 : A_0 \rightarrow A$ denotes the inclusion, then $K_0(\iota_0) f_0 = f$.

To show injectivity, let $A_0 \subseteq A$ be a separable, unital C$^*$-subalgebra, let $\iota_0 : A_0 \rightarrow A$ denote the inclusion, and suppose $f, g : \mathbb{Z}X \rightarrow K_0(A_0)$ are such that $K_0(\iota_0) f = K_0(\iota_0) g$.  For each $x \in X$, there are an integer $n(x) \geq 1$ and projections $p_x, q_x, p'_x, q'_x \in \mathbb{M}_{n(x)}(A_0)$ such that $f(x) = [p_x] - [q_x]$ and $g(x) = [p'_x] - [q'_x]$.  For each $x \in X$, there are an integer $k(x) \geq 1$ and a partial isometry $v_x \in \mathbb{M}_{2n(x) + k(x)}(A)$ with
\[ v_x^* v_x = p_x \oplus q'_x \oplus 1_A^{\oplus k(x)} \quad \text{and} \quad v_x v_x^* = p'_x \oplus q_x \oplus 1_A^{\oplus k(x)} \]
Let $A_1$ denote the C$^*$-subalgebra of $A$ generated by $A_0$ and the entries of each $v_x$ for $x \in X$.  If $\iota_{1, 0} : A_0 \hookrightarrow A_1$ denotes the inclusion, then $K_0(\iota_{1, 0}) f = K_0(\iota_{1, 0}) g$.
\end{proof}

\subsection{Tensorial Absorption and Separability}

Recall from \cite{TomsWinter:SSA} that a separable, unital, infinite-dimensional C$^*$-algebra $D$ is \emph{strongly self-absorbing} if there is an isomorphism $D \rightarrow D \otimes D$ approximately unitarily equivalent to the first factor embedding.  For a strongly self-absorbing C$^*$-algebra $D$, a separable C$^*$-algebra $A$ is \emph{$D$-stable} if $A \otimes D \cong A$.  The only strongly self-absorbing C$^*$-algebra needed here is the universal UHF-algebra $\mathcal{Q}$.

A local condition characterizing $D$-stability for separable, unital C$^*$-algebras can be extracted from Theorem 2.2 of \cite{TomsWinter:SSA} which shows a separable, unital C$^*$-algebra $A$ is $D$-stable if, and only if, there is a unital embedding of $D$ into the \emph{central sequence algebra} $A_\infty \cap A'$ of $A$ where $A_\infty := \ell^\infty(A) / c_0(A)$.  This local characterization extends to the non-separable setting with $D$-stability replaced by separable $D$-stability.

\begin{lemma}\label{lemma:LocalDStability}
For a strongly self-absorbing $C^*$-algebra $D$, a unital $C^*$-algebra $A$ is separably $D$-stable if, and only if, for every finite set $\mathcal{F} \subseteq A$, for every finite set $\mathcal{G} \subseteq D$, and for every $\varepsilon > 0$, there is a unital, completely positive map $\varphi : D \rightarrow A$ such that
\[ \|\varphi(d d') - \varphi(d) \varphi(d') \| < \varepsilon \quad \text{and} \quad \| a \varphi(d) - \varphi(d) a \| < \varepsilon \]
for all $a \in \mathcal{F}$ and $d, d' \in \mathcal{G}$.
\end{lemma}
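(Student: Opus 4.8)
The plan is to reduce both implications to Theorem 2.2 of \cite{TomsWinter:SSA}, which characterizes $D$-stability of a separable, unital C*-algebra $C$ by the existence of a unital embedding $D \to C' \cap C_\infty$, and to translate between such a central embedding and the stated local approximation condition by a lifting argument in one direction and a diagonal construction in the other. Throughout I use that a strongly self-absorbing C*-algebra is nuclear and simple (for the application $D = \mathcal{Q}$ this is visible); in particular every unital $*$-homomorphism out of $D$ is automatically an embedding, so the ``unital embedding'' in \cite{TomsWinter:SSA} may be read as ``unital $*$-homomorphism''.

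For the forward implication, suppose $A$ is separably $D$-stable and fix finite sets $\mathcal{F} \subseteq A$, $\mathcal{G} \subseteq D$ and $\varepsilon > 0$. Let $A_0$ be the separable C*-subalgebra of $A$ generated by $\mathcal{F} \cup \{1_A\}$. By Definition \ref{defn:Separably} there is a separable, $D$-stable C*-subalgebra $\hat{A} \supseteq A_0$; since $1_A \in \hat{A}$, it is unital with unit $1_A$. Theorem 2.2 of \cite{TomsWinter:SSA} then yields a unital $*$-homomorphism $\Phi : D \to \hat{A}' \cap \hat{A}_\infty \subseteq \hat{A}_\infty = \ell^\infty(\hat{A})/c_0(\hat{A})$. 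As $D$ is nuclear, the Choi--Effros lifting theorem provides a unital, completely positive lift $(\varphi_n)_n : D \to \ell^\infty(\hat{A})$ of $\Phi$, whose components are unital, completely positive maps $\varphi_n : D \to \hat{A}$. That $\Phi$ is multiplicative and takes values in the commutant $\hat{A}' \cap \hat{A}_\infty$ translates exactly into
\[ \|\varphi_n(d d') - \varphi_n(d)\varphi_n(d')\| \to 0 \quad \text{and} \quad \|a \varphi_n(d) - \varphi_n(d) a\| \to 0 \]
as $n \to \infty$, for all $d, d' \in D$ and $a \in \hat{A}$. Choosing $n$ large enough and composing with the inclusion $\hat{A} \hookrightarrow A$ produces the required map $\varphi = \varphi_n$.

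For the converse, suppose the local condition holds and let $A_0 \subseteq A$ be separable; enlarging it, assume $1_A \in A_0$. I would build inductively an increasing chain of separable, unital C*-subalgebras $A_0 \subseteq A_1 \subseteq \cdots$ together with unital, completely positive maps $\varphi_n : D \to A$ as follows: at stage $n$, apply the hypothesis with $\varepsilon_n = 1/n$, a finite set $\mathcal{G}_n \subseteq D$, and a finite set $\mathcal{F}_n \subseteq A_n$ chosen so that $\bigcup_n \mathcal{G}_n$ is dense in $D$ and $\bigcup_n \mathcal{F}_n$ is dense in $\hat{A} := \overline{\bigcup_n A_n}$, then set $A_{n+1} = C^*(A_n \cup \varphi_n(D))$. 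The resulting $\hat{A}$ is separable, unital, and contains $A_0$, and the sequence $(\varphi_n)_n$ induces a unital, completely positive map $\Phi : D \to \hat{A}_\infty$. Density of $\bigcup_n \mathcal{G}_n$ together with the approximate multiplicativity makes $\Phi$ multiplicative, hence (being unital completely positive) a $*$-homomorphism; density of $\bigcup_n \mathcal{F}_n$ together with the approximate centrality places $\Phi(D)$ inside $\hat{A}' \cap \hat{A}_\infty$. Theorem 2.2 of \cite{TomsWinter:SSA} then shows $\hat{A}$ is $D$-stable, so $A$ is separably $D$-stable.

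The one delicate point is the bookkeeping in the converse: because $\hat{A}$ is the closure of the very union being constructed, the sets $\mathcal{F}_n \subseteq A_n$ must be organized to exhaust a dense subset of the eventual limit. I expect this to be the main (though routine) obstacle, handled by the usual diagonal argument—at stage $n$ one includes the first $n$ elements of fixed countable dense subsets of $A_1, \ldots, A_n$, which are already available. Everything else is a direct application of \cite{TomsWinter:SSA} and the Choi--Effros lifting theorem.
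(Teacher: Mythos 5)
Your proposal is correct and follows essentially the same route as the paper: the forward direction is identical (separable $D$-stable subalgebra containing $\mathcal{F}\cup\{1_A\}$, Theorem 2.2 of \cite{TomsWinter:SSA}, Choi--Effros lifting, then choose $n$ large), and the converse is the same inductive construction of a separable subalgebra closed under the approximating maps followed by Theorem 2.2. The only cosmetic difference is the diagonal bookkeeping in the converse --- you interleave a single map per stage with growing finite sets and tolerance $1/n$, while the paper produces a full asymptotically central sequence $\varphi_{k,n}$ relative to each $A_k$ and then reindexes --- but these are the same argument, and your handling of the density of $\bigcup_n \mathcal{F}_n$ in $\hat{A}$ is sound.
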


\begin{proof}
Assume first that $A$ is separably $D$-stable and fix a finite set $\mathcal{F} \subseteq A$, a finite set $\mathcal{G} \subseteq \mathcal{D}$, and $\varepsilon > 0$. There is a separable, unital, $D$-stable C$^*$-subalgebra $\hat{A}$ of $A$ containing $\mathcal{F}$.  By Theorem 2.2 of \cite{TomsWinter:SSA}, there is a unital embedding $\varphi_\infty : D \rightarrow \hat{A}_\infty \cap \hat{A}'$.  As $D$ is nuclear, the Choi-Effros lifting theorem implies that there are unital, completely positive maps $\varphi_n : D \rightarrow \hat{A}$ representing $\varphi_\infty$.  Hence
\[ \lim_{n \rightarrow \infty} \| \varphi_n(d d') - \varphi_n(d) \varphi_n(d') \| = \lim_{n \rightarrow \infty} \| a \varphi_n(d) - \varphi_n(d) a \| = 0 \]
for all $a \in \hat{A}$ and $d, d' \in D$.  Take $\varphi = \varphi_n$ for some sufficiently large $n$.

Conversely, suppose the approximation condition holds and let $A_0$ be a separable, unital C$^*$-subalgebra of $A$.  Let $\mathcal{F}_{0, n}$ be an increasing sequence of finite subsets of $A_0$ with dense union and let $\mathcal{G}_n$ be an increasing sequence of finite subsets of $D$ with dense union.  There are unital, completely positive maps $\varphi_{0, n} : D \rightarrow A$ such that
\[ \| \varphi_{0, n}(d d') - \varphi_{0, n}(d) \varphi_{0, n}(d') \| < \frac1n \quad \text{and} \quad \| a \varphi_{0, n}(d) - \varphi_{0, n}(d) a \| < \frac1n  \]
for all $a \in \mathcal{F}_{0, n}$ and $d, d' \in \mathcal{G}_n$.  Let $A_1$ denote the C$^*$-subalgebra of $A$ generated by $A_0$ and $\varphi_{0, n}(D)$ for each $n \geq 1$ and note that $A_1$ is separable.

Iterating this argument, there are an increasing sequence of separable C$^*$-subalgebras $A_k$ of $A$ and sequences of unital, completely positive maps $\varphi_{k, n} : D \rightarrow A_{k+1}$ with
\[ \lim_{n \rightarrow \infty} \| \varphi_{k,n}(d d') - \varphi_{k,n}(d) \varphi_{k, n}(d') \| = \lim_{n \rightarrow \infty} \| a \varphi_{k,n}(d) - \varphi_{k,n}(d) a \| = 0 \]
for all $a \in A_k$, $d, d' \in D$, and $k \geq 0$.  Let $\hat{A} \subseteq A$ denote the closed union of the $A_k$.  A reindexing argument produces a sequence of unital, completely positive maps $\psi_n : D \rightarrow \hat{A}$ such that
\[ \lim_{n \rightarrow \infty} \| \psi_n(d d') - \psi_n(d) \psi_n(d') \| = \lim_{n \rightarrow \infty} \| a \psi_n(d) - \psi_n(d) a \| = 0 \]
for all $a \in \hat{A}$ and $d, d' \in D$.  The sequence $\psi_n$ induces a unital $^*$-homomorphism $D \rightarrow \hat{A}_\infty \cap \hat{A}'$, and since $\hat{A}$ is separable, $\hat{A}$ is $D$-stable by Theorem 2.2 of \cite{TomsWinter:SSA}.  As $\hat{A}$ contains $A_0$ by construction, this shows $A$ is separably $D$-stable.
\end{proof}

The next proposition collects some permanence properties of separable $D$-stability.

\begin{proposition}\label{prop:DStablePermanence}
For a strongly self-absorbing $C^*$-algebra $D$, hereditary subalgebras, quotients, and extensions of separably $D$-stable $C^*$-algebras are separably $D$-stable, and $\ell^\infty$-products and ultraproducts of unital, separably $D$-stable $C^*$-algebras are separably $D$-stable.
\end{proposition}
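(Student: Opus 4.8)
The plan is to split the statement into two essentially independent parts. The permanence under hereditary subalgebras, quotients, and extensions will be deduced from the corresponding permanence properties of $D$-stability among \emph{separable} C*-algebras, fed through the reduction machinery of Corollary \ref{cor:SeparableExtensionPermenence} and Proposition \ref{prop:SeparableHereditaryPermenance}; the $\ell^\infty$-product and ultraproduct statements will instead be handled directly through the local approximation criterion of Lemma \ref{lemma:LocalDStability}.

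First I would record that, as a property of separable C*-algebras, $D$-stability is preserved under sequential inductive limits with injective connecting maps and is preserved by passage to ideals, quotients, hereditary subalgebras, and extensions; these are the standard permanence properties of strongly self-absorbing tensorial absorption recorded in \cite{TomsWinter:SSA}. Taking $(P)$ to be $D$-stability, the inductive-limit hypothesis required by Corollary \ref{cor:SeparableExtensionPermenence} is then met, so the permanence of separably $D$-stable C*-algebras under quotients and under extensions follows at once from that corollary, while the permanence under hereditary subalgebras follows from Proposition \ref{prop:SeparableHereditaryPermenance}. (Permanence under ideals is a special case of the hereditary statement.)

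For the $\ell^\infty$-product $\prod_\lambda A_\lambda$ of unital, separably $D$-stable C*-algebras, which is again unital, I would verify the criterion of Lemma \ref{lemma:LocalDStability} by a coordinatewise construction. Given finite sets $\mathcal{F} \subseteq \prod_\lambda A_\lambda$ and $\mathcal{G} \subseteq D$ and $\varepsilon > 0$, write $\mathcal{F}_\lambda \subseteq A_\lambda$ for the finite set of $\lambda$-th coordinates of the elements of $\mathcal{F}$; applying Lemma \ref{lemma:LocalDStability} to each unital, separably $D$-stable $A_\lambda$ yields a unital, completely positive map $\varphi_\lambda : D \to A_\lambda$ which is approximately multiplicative on $\mathcal{G}$ and approximately central relative to $\mathcal{F}_\lambda$, to within $\varepsilon$. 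The assembled map $\varphi := (\varphi_\lambda)_\lambda : D \to \prod_\lambda A_\lambda$ is unital and completely positive, and since the norm on the product is the supremum of the coordinate norms, $\varphi$ inherits the same approximate multiplicativity and centrality relative to $\mathcal{G}$ and $\mathcal{F}$; Lemma \ref{lemma:LocalDStability} then gives that $\prod_\lambda A_\lambda$ is separably $D$-stable. Any ultraproduct $\prod_\omega A_\lambda$ is a quotient of this $\ell^\infty$-product, so its separable $D$-stability is immediate from the quotient permanence established above.

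I expect no real internal obstacle: the product and ultraproduct arguments are routine once one assembles $\varphi$ coordinatewise and notes that $\ell^\infty$-products and ultraproducts of unital algebras are unital, so that Lemma \ref{lemma:LocalDStability} applies to them. The one genuinely substantial ingredient is external, namely the separable permanence of $D$-stability under extensions imported from \cite{TomsWinter:SSA}; the remaining separable permanence properties are comparatively elementary consequences of the local characterization.
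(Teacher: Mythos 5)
Your proposal is correct and follows essentially the same route as the paper: it deduces the hereditary/quotient/extension permanence from the separable permanence properties in \cite{TomsWinter:SSA} fed through Proposition \ref{prop:SeparableHereditaryPermenance} and Corollary \ref{cor:SeparableExtensionPermenence}, handles $\ell^\infty$-products by assembling coordinatewise approximately central, approximately multiplicative maps via Lemma \ref{lemma:LocalDStability}, and treats ultraproducts as quotients of $\ell^\infty$-products. The only cosmetic difference is that you spell out the coordinatewise product argument in more detail than the paper does.
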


\begin{proof}
By Corollary 3.4 in \cite{TomsWinter:SSA}, $D$-stability is preserved by sequential inductive limits of separable C$^*$-algebras and by Corollaries 3.1 and 3.3 and Theorem 4.3 of \cite{TomsWinter:SSA}, hereditary subalgebras, quotients, and extensions of separable, $D$-stable C$^*$-algebras are $D$-stable.  Now, separable $D$-stability passes to hereditary subalgebras by Proposition \ref{prop:SeparableHereditaryPermenance} and is preserved by quotients and extensions by Corollary \ref{cor:SeparableExtensionPermenence}.  For $\ell^\infty$-products, the result follows from Lemma \ref{lemma:LocalDStability} by choosing approximately central approximate morphisms into each factor of the product and taking the product of these maps.  For ultraproducts, the result follows from the result for $\ell^\infty$-products and quotients.
\end{proof}

\section{Some $KK$-theory}\label{sec:KKTheory}

This section contains a brief overview of $KK$-theory and collects the results on absorbing representations which will be needed in the classification results in Section \ref{sec:UltraproductClassification}.  With one exception in the proof of Proposition \ref{prop:AdmissibleKernel}, we will work exclusively with the nuclear $KK$-bifunctor $KK_\mathrm{nuc}(-, -)$ introduced by Skandalis in \cite{Skandalis:KKnuc}.

\subsection{Basics of $KK$-theory}

Let $A$ be a separable C$^*$-algebra and let $B$ be a $\sigma$-unital C$^*$-algebra.  The word \emph{representation} will refer to a $^*$-homomorphism $A \rightarrow M(B \otimes \mathcal{K})$.  A representation $\varphi : A \rightarrow M(B \otimes \mathcal{K})$ is called \emph{weakly nuclear} if the completely positive map $A \rightarrow B \otimes \mathcal{K}$ given by $a \mapsto b^* \varphi(a) b$ is nuclear for all $b \in B \otimes \mathcal{K}$.

Let $\mathcal{E}_\mathrm{nuc}(A, B)$ denote the set of pairs $(\varphi, \psi)$ such that $\varphi, \psi : A \rightarrow M(B \otimes \mathcal{K})$ are weakly nuclear representations with $\varphi(a) - \psi(a) \in B \otimes \mathcal{K}$ for all $a \in A$.  Such a pair $(\varphi, \psi)$ is called a (\emph{weakly nuclear}) \emph{Cuntz pair}.  A \emph{homotopy} between Cuntz pairs $(\varphi_0, \psi_0)$ and $(\varphi_1, \psi_1)$ in $\mathcal{E}_\mathrm{nuc}(A, B)$ is a Cuntz pair $(\Phi, \Psi) \in \mathcal{E}_\mathrm{nuc}(A, C([0, 1], B))$ such that for $t \in \{0, 1\}$, composing $\Phi$ and $\Psi$ with the evaluation map $M(C([0, 1], B) \otimes \mathcal{K}) \twoheadrightarrow M(B \otimes \mathcal{K})$ at $t$ produces $\varphi_t$ and $\psi_t$, respectively.  Let $KK_\mathrm{nuc}(A, B)$ denote the set of homotopy classes of Cuntz pairs in $\mathcal{E}_\mathrm{nuc}(A, B)$ and let $[\varphi, \psi]$ denote the class of a Cuntz pair $(\varphi, \psi)$ in $KK_\mathrm{nuc}(A, B)$.

Let $s_1, s_2 \in M(B \otimes \mathcal{K})$ be isometries with $s_1 s_1^* + s_2 s_2^* = 1$.  Given representations $\theta, \rho : A \rightarrow M(B \otimes \mathcal{K})$, define a representation $\theta \oplus_{s_1, s_2} \rho = s_1 \theta(\cdot) s_1^* + s_2 \rho(\cdot) s_2^*$ called the \emph{Cuntz sum} with respect to $s_1$ and $s_2$.  For another choice of isometries $t_1, t_2 \in M(B \otimes \mathcal{K})$ with $t_1 t_1^* + t_2 t_2^* = 1$, $u = t_1 s_1^* + t_2 s_2^*$ is a unitary with $\mathrm{ad}(u)(\theta \oplus_{s_1, s_2} \rho) = \theta \oplus_{t_1, t_2} \rho$, and hence, up to unitary equivalence, the Cuntz sum is independent of the choice of $s_1$ and $s_2$.

If $(\varphi_1, \psi_1)$ and $(\varphi_2, \psi_2)$ are Cuntz pairs in $\mathcal{E}_\mathrm{nuc}(A, B)$, then $(\varphi_1 \oplus_{s_1, s_2} \varphi_2, \psi_1 \oplus_{s_1, s_2} \psi_2)$ is also a Cuntz pair in $\mathcal{E}_\mathrm{nuc}(A, B)$.    Since the unitary group of $M(B \otimes \mathcal{K})$ is path-connected in the operator norm topology by the main result of \cite{CuntzHigson:KupiersTheorem} (see also \cite{Mingo:KupiersTheorem} for the case when $B$ is unital), the class $[\varphi_1 \oplus_{s_1, s_2} \varphi_2, \psi_1 \oplus_{s_1, s_2} \psi_2]$ in $KK_\mathrm{nuc}(A, B)$ is independent of the choice of isometries $s_1$ and $s_2$; abusing notation, this element will be written as $[\varphi_1 \oplus \varphi_2, \psi_1 \oplus \psi_2]$.  The set $KK_\mathrm{nuc}(A, B)$ is an abelian group with addition given by Cuntz sum.

If $\varphi : A \rightarrow B$ is a nuclear $^*$-homomorphism and $p \in \mathcal{K}$ is a rank one projection, define a representation $\varphi_p : A \rightarrow M(B \otimes \mathcal{K})$ by $\varphi_p(a) = \varphi(a) \otimes p$ for $a \in A$.  Then $(\varphi_p, 0)$ defines a Cuntz pair in $\mathcal{E}_\mathrm{nuc}(A, B)$ and the corresponding element of $KK_\mathrm{nuc}(A, B)$ is denoted by $[\varphi]$.  The element $[\varphi]$ is independent of the choice of the rank one projection $p$.

Given a separable C$^*$-algebra $A$ and a $^*$-homomorphism $\theta : B \rightarrow D$ between $\sigma$-unital C$^*$-algebras $B$ and $D$, there is an induced group homomorphism
\[ \theta_* : KK_\mathrm{nuc}(A, B) \rightarrow KK_\mathrm{nuc}(A, D). \]
In this way, $KK_\mathrm{nuc}(A, -)$ becomes a covariant functor from the category of $\sigma$-unital C$^*$-algebras to the category of abelian groups; see \cite{Skandalis:KKnuc} for the details.  We will only need an explicit computation of $\theta_*$ in the following special case.

\begin{proposition}\label{prop:KasparovProduct}
Suppose $A$ and $E$ are separable $C^*$-algebras, $I \subseteq E$ is an ideal in $E$ with $I \otimes \mathcal{K} \cong I$, and $\varphi, \psi : A \rightarrow E$ are nuclear $^*$-homomorphisms with $\varphi(a) - \psi(a) \in I$ for all $a \in A$.  Let $\lambda : E \rightarrow M(I)$ denote the canonical $^*$-homomorphism and note that $(\lambda \varphi, \lambda \psi) \in \mathcal{E}_\mathrm{nuc}(A, I)$.

If $j : I \rightarrow E$ denotes the inclusion map, then $j_*[\lambda \varphi, \lambda \psi] = [\varphi] - [\psi]$ in $KK_\mathrm{nuc}(A, E)$.
\end{proposition}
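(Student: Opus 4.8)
The plan is to reduce both sides to explicit Cuntz pairs in $\mathcal{E}_\mathrm{nuc}(A, E)$ and to identify them through the canonical $*$-homomorphism $\Lambda : M(E \otimes \mathcal{K}) \rightarrow M(B \otimes \mathcal{K})$ induced by the ideal $B \otimes \mathcal{K} \trianglelefteq E \otimes \mathcal{K}$ (which restricts to $\lambda$ on $E$ and to the inclusion $B \otimes \mathcal{K} \hookrightarrow M(B \otimes \mathcal{K})$ on the ideal).

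First I would rewrite the right-hand side as a single Cuntz pair. Fix a rank one projection $p \in \mathcal{K}$ and set $\varphi_p(a) = \varphi(a) \otimes p$ and $\psi_p(a) = \psi(a) \otimes p$; since $\varphi(a) - \psi(a) \in B$, the pair $(\varphi_p, \psi_p)$ lies in $\mathcal{E}_\mathrm{nuc}(A, E)$ with difference taking values in $B \otimes \mathcal{K}$. Using $[\varphi] = [\varphi_p, 0]$ and $-[\psi] = [0, \psi_p]$ together with the additivity identity $[\alpha, \beta] + [\beta, \gamma] = [\alpha, \gamma]$ for Cuntz pairs (which itself follows from path-connectedness of the unitary group of $M(E \otimes \mathcal{K})$, via a cyclic permutation of three summands), I obtain $[\varphi] - [\psi] = [\varphi_p, 0] + [0, \psi_p] = [\varphi_p, \psi_p]$.

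Next I would compute the left-hand side. The representations $\varphi_p, \psi_p$ take values in the ideal $E \otimes \mathcal{K}$, and a direct check shows $\Lambda \varphi_p(a) = \lambda\varphi(a) \otimes p$ and $\Lambda \psi_p(a) = \lambda\psi(a) \otimes p$, so $(\varphi_p, \psi_p)$ is a lift along $\Lambda$ of the amplified pair $(\lambda\varphi \otimes p, \lambda\psi \otimes p)$. The crux is then a pushforward lemma: for any $(\tilde{\alpha}, \tilde{\beta}) \in \mathcal{E}_\mathrm{nuc}(A, E)$ whose difference takes values in $B \otimes \mathcal{K}$, one has $j_*[\Lambda\tilde{\alpha}, \Lambda\tilde{\beta}] = [\tilde{\alpha}, \tilde{\beta}]$; that is, the functorial map $j_*$ inverts the restriction of coefficients along $\Lambda$ on the classes supported in the ideal. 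Applying this to $(\varphi_p, \psi_p)$ gives $j_*[\lambda\varphi \otimes p, \lambda\psi \otimes p] = [\varphi_p, \psi_p]$. Finally, amplification by $p$ does not change the class, $[\lambda\varphi \otimes p, \lambda\psi \otimes p] = [\lambda\varphi, \lambda\psi]$, by the same stability argument that makes $[\varphi]$ independent of the chosen rank one projection; combining the three displayed identities proves the proposition.

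The main obstacle is the pushforward lemma. Because the ideal inclusion $j : B \rightarrow E$ is degenerate, it does not extend to a unital $*$-homomorphism of multiplier algebras, so Skandalis's functorial map $j_*$ is the Kasparov product with $[j] \in KK_\mathrm{nuc}(B, E)$ rather than a naive composition. I would prove the lemma by unwinding this product, using that $\Lambda$ is adjoint to $j$ in the sense that it is precisely the canonical map restricting multipliers of $E \otimes \mathcal{K}$ to the ideal $B \otimes \mathcal{K}$; the hypothesis that $\tilde{\alpha} - \tilde{\beta}$ vanishes off $B \otimes \mathcal{K}$ is exactly what forces the lifted pair to represent the product. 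A secondary technical point is the bookkeeping reconciling the stability isomorphism $B \cong B \otimes \mathcal{K}$ implicit in viewing $\lambda\varphi$ as a representation into $M(B \otimes \mathcal{K})$ with the amplification by $p$ appearing on the right-hand side.
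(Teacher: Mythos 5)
Your preliminary reductions are fine as far as they go: rewriting $[\varphi]-[\psi]$ as the single Cuntz pair $[\varphi_p,\psi_p]$ via the cocycle identity, checking that $\Lambda\varphi_p(a)=\lambda\varphi(a)\otimes p$, and absorbing the rank-one amplification by stability are all routine and correct. But the proof then rests entirely on your ``pushforward lemma,'' and that lemma \emph{is} the proposition (in marginally greater generality); nothing in your sketch constitutes an argument for it. The sentence claiming that the hypothesis ``$\tilde{\alpha}-\tilde{\beta}$ takes values in $B\otimes\mathcal{K}$ is exactly what forces the lifted pair to represent the product'' asserts the conclusion rather than proving it: that hypothesis is what makes \emph{both} classes $[\Lambda\tilde{\alpha},\Lambda\tilde{\beta}]$ and $[\tilde{\alpha},\tilde{\beta}]$ well defined, but it does nothing by itself to identify $j_*$ of the former with the latter.

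Here is the concrete obstruction your sketch must overcome and does not. Since $j$ is a degenerate (ideal) inclusion, $j_*$ has to be computed as the Kasparov product with $[j]$, i.e.\ in the Hilbert-module picture as the interior tensor product $-\otimes_B E$. Applied to the Kasparov module $(\lambda\varphi,\lambda\psi,1_{M(B)})$ over the Hilbert $B$-module $B$, this produces a Kasparov $(A,E)$-module whose underlying Hilbert $E$-module is $B\otimes_B E\cong B$, with $\varphi$ and $\psi$ acting by left multiplication; whereas the class $[\varphi]-[\psi]=[\varphi_p,\psi_p]$ is represented on the Hilbert $E$-module $E$ itself. These are genuinely different Hilbert $E$-modules (the first is not even full when $B\neq E$), and identifying the two resulting classes is the actual mathematical content of the proposition. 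The paper bridges exactly this gap with an explicit homotopy: the Kasparov module over $C([0,1],E)$ built on $H=\{f\in C([0,1],E):f(0)\in B\}$, with $\varphi$ and $\psi$ acting by pointwise left multiplication, restricts at $t=0$ to the module on $B$ and at $t=1$ to the module on $E$. Some construction of this kind is indispensable; ``unwinding the product, using that $\Lambda$ is adjoint to $j$'' does not supply it, so your proposal has a genuine gap at its central step.
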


\begin{proof}
As in \cite{Skandalis:KKnuc} (see also Chapter 17 of \cite{Blackadar:KTheory}), $KK_\mathrm{nuc}(A, I)$ can be realized as homotopy classes of Kasparov modules $(\theta_0, \theta_1, F)$ where $\theta_i : A \rightarrow \mathcal{B}(K_i)$ is a weakly nuclear representation of $A$ on a countably generated Hilbert $I$-module $K_i$ and $F : K_0 \rightarrow K_1$ is an adjointable operator which, modulo the compacts, is a unitary intertwining of $\theta_0$ and $\theta_1$.

Viewing $I$ has a Hilbert module over itself, the Cuntz pair $(\lambda \varphi, \lambda \psi)$ defines the Kasparov module $(\lambda \varphi, \lambda \psi, 1_{M(I)})$ which defines the element $[\lambda \varphi, \lambda \psi] \in KK_\mathrm{nuc}(A, I)$.  If $H_0 = I$ viewed as a Hilbert $E$-module, then there is a natural isomorphism $M(I) \rightarrow \mathcal{B}(H_0)$.  Let $\varphi_0, \psi_0 : A \rightarrow \mathcal{B}(H_0)$ denote the representations given by composing $\lambda \varphi$ and $\lambda \psi$ with this isomorphism.  Then $j_*[\lambda \varphi, \lambda \psi, 1_{M(I)}] = [\varphi_0, \psi_0, 1_{\mathcal{B}(H_0)}]$ in $KK_\mathrm{nuc}(A, E)$ by the construction of $j_*$ given in \cite{Skandalis:KKnuc} (see also Section 17.8 of \cite{Blackadar:KTheory}).

Consider $H_1 = E$ as a Hilbert $E$-module.  If $\varphi_1, \psi_1 : A \rightarrow \mathcal{B}(H_1)$ denote the representations induced by $\varphi$ and $\psi$, respectively, then $[\varphi_1, \psi_1, 1_{\mathcal{B}(H_1)}] = [\varphi] - [\psi]$ in $KK_\mathrm{nuc}(A, I)$.  It suffices to show $(\varphi_0, \psi_0, 1_{\mathcal{B}(H_0)})$ and $(\varphi_1, \psi_1, 1_{\mathcal{B}(H_1)})$ are homotopic.  To this end, define
\[ H = \{ f \in C([0, 1], E) : f(0) \in I \} \]
and view $H$ as a Hilbert $C([0,1], E)$-module in the natural way.  Define $\Phi, \Psi : A \rightarrow \mathcal{B}(H)$ by
\[ \Phi(a)(f)(t) = \varphi(a)f(t) \quad \text{and} \quad \Psi(a)(f)(t) = \psi(a)f(t) \]
for all $a \in A$, $f \in C([0, 1], E)$, and $t \in [0, 1]$.  Then $(\Phi, \Psi, 1_{\mathcal{B}(H)})$ is a homotopy from $(\varphi_0, \psi_0, 1_{\mathcal{B}(H_0)})$ to $(\varphi_1, \psi_1, 1_{\mathcal{B}(H_1)})$.
\end{proof}

For a separable C$^*$-algebras $A$, a $\sigma$-unital C$^*$-algebras, $B$, and a representation $\psi : A \rightarrow M(B \otimes \mathcal{K})$, define the \emph{infinite repeat} $\psi_\infty : A \rightarrow M(B \otimes \mathcal{K})$ as follows.  Let $(s_n)_{n=1}^\infty$ be a sequence of isometries in $M(B \otimes \mathcal{K})$ such that $\sum_{n=1}^\infty s_n s_n^* = 1$ and let
\[ \psi_\infty(a) = \sum_{n=1}^\infty s_n \psi(a) s_n^* \in M(B \otimes \mathcal{K}) \]
for $a \in A$ where convergence is in the strict topology.  Then $\psi_\infty$ is a $^*$-homomorphisms, and up to unitary equivalence, $\psi_\infty$ is independent of the choice of the sequence $(s_n)_{n=1}^\infty$; in particular, $\psi \oplus \psi_\infty$ and $\psi$ are unitarily equivalent.  Note also that if $\psi$ is weakly nuclear, then so is $\psi_\infty$.

\subsection{Absorbing Representations}

Let $A$ and $B$ be C$^*$-algebras such that $A$ is separable and $B$ is $\sigma$-unital.  Given two representations $\varphi, \psi : A \rightarrow M(B \otimes \mathcal{K})$, write $\varphi \sim \psi$ if there is a sequence of unitaries $(u_n)_{n=1}^\infty \subseteq M(B \otimes \mathcal{K})$ such that
\begin{enumerate}
  \item $\| \varphi(a) - u_n \psi(a) u_n^* \| \rightarrow 0$ as $n \rightarrow \infty$ and
  \item $\varphi(a) - u_n \psi(a) u_n^* \in B \otimes \mathcal{K}$ for all $n \geq 1$
\end{enumerate}
for all $a \in A$.

\begin{definition}\label{defn:Absorbing}
Suppose $A$ and $B$ are C$^*$-algebras such that $A$ is separable and $B$ is $\sigma$-unital.  A representation $\varphi : A \rightarrow M(B \otimes \mathcal{K})$ is called (\emph{unitally}) \emph{nuclearly absorbing} if for all (unital) weakly nuclear representations $\theta : A \rightarrow M(B \otimes \mathcal{K})$, $\varphi \oplus \theta \sim \varphi$.
\end{definition}

Consider the special case when $B = \mathbb{C}$.  All representations $A \rightarrow M(\mathcal{K}) \cong \mathcal{B}(\ell^2(\mathbb{N}))$ are weakly nuclear as $\mathcal{K}$ is nuclear.  Now, Voiculescu's representation theorem, as stated in Theorem II.5.8 of \cite{Davidson:C*Book} for example, is the statement that a unital representation $\varphi : A \rightarrow M(\mathcal{K})$ is unitally nuclearly absorbing if, and only if, $\varphi$ is faithful and $\varphi(A) \cap \mathcal{K} = 0$.  There is a far reaching generalization of this result for nuclearly absorbing representations due to the work of Elliott and Kucerovsky in \cite{ElliottKucerovsky} (see also \cite{KucerovskyNg} and \cite{Gabe:AbsorbingExtensions}).

A $\sigma$-unital C$^*$-algebra $B$ has the \emph{corona factorization property} if for all projections $p \in M(B \otimes \mathcal{K})$, $1 \precsim p \oplus p$ implies $1 \precsim p$ (see \cite{KucerovskyNg}).  The corona factorization property is a very weak regularity property of C$^*$-algebras.  See \cite{KirchbergRordam:CentralSequenceAlgebras, OrtegaPereraRordamI, OrtegaPereraRordamII} and the references within for several examples and connections to other regularity properties.  In this paper, the examples of interest will be separable C$^*$-subalgebras of the trace-kernel ideal $J_B$ associated to an appropriate C$^*$-algebra $B$ as discussed in the introduction and introduced formally in Section \ref{sec:TraceKernel}.

When $B$ is unital, a $^*$-homomorphism $\varphi : A \rightarrow B$ is called \emph{unitizably full} if the unitization $\varphi^\dag : A^\dag \rightarrow B$ is full.  Note that when $A$ and $B$ are unital C$^*$-algebras, a $^*$-homomorphism $\varphi : A \rightarrow B$ is unitizably full if, and only if, $\varphi$ is full and $1_B - \varphi(1_A)$ is full.

\begin{theorem}\label{thm:ElliottKucerovskyGabe}
If $A$ is a separable $C^*$-algebra and $B$ is a $\sigma$-unital $C^*$-algebra with the corona factorization property, then every unitizably full representation $A \rightarrow M(B \otimes \mathcal{K})$ is nuclearly absorbing.
\end{theorem}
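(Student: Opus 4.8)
The plan is to show that every unitizably full representation $\varphi : A \rightarrow M(B \otimes \mathcal{K})$ is nuclearly absorbing by appealing to the abstract absorption theorem of Elliott and Kucerovsky, whose hypotheses are phrased in terms of a \emph{purely large} condition on the Busby invariant of an extension. The strategy is to translate the statement ``$\varphi \oplus \theta \sim \varphi$ for every weakly nuclear $\theta$'' into a statement about extensions of $A$ by $B \otimes \mathcal{K}$, verify that the relevant extension is purely large using the fullness hypothesis, and then invoke the corona factorization property to upgrade the Elliott--Kucerovsky absorption to the unitary equivalence $\sim$ we want.

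\medskip

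\emph{First}, I would recall the extension-theoretic reformulation of absorption. A representation $\varphi : A \rightarrow M(B \otimes \mathcal{K})$ with $\varphi(A) \cap (B \otimes \mathcal{K})$ suitably controlled determines an essential extension of $A$ by $B \otimes \mathcal{K}$ via its image in the corona algebra $\mathcal{C}(B \otimes \mathcal{K}) = M(B \otimes \mathcal{K}) / (B \otimes \mathcal{K})$. Saying $\varphi$ absorbs every weakly nuclear $\theta$ up to the equivalence $\sim$ is, via the standard dictionary (see \cite{ElliottKucerovsky}), equivalent to saying the associated extension is absorbing in the sense that it dominates the trivial (split) extensions. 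Elliott and Kucerovsky characterize precisely which extensions are absorbing: an essential extension is nuclearly absorbing if and only if it is \emph{purely large}, meaning that for every $b$ in the Busby-invariant ideal, the hereditary subalgebra it generates contains a stable subalgebra that is full in $B \otimes \mathcal{K}$.

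\medskip

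\emph{Next}, I would verify the purely large condition from unitizable fullness. The key point is that $\varphi^\dag : A^\dag \rightarrow M(B \otimes \mathcal{K})$ being full forces the image of $\varphi$ to generate $B \otimes \mathcal{K}$ as an ideal after passing to hereditary subalgebras, which is exactly the input needed to produce full, stable pieces inside the relevant hereditary subalgebras of $B \otimes \mathcal{K}$; the stabilization $(B \otimes \mathcal{K}) \otimes \mathcal{K} \cong B \otimes \mathcal{K}$ supplies the stability automatically. Here is where the corona factorization property enters: Elliott--Kucerovsky (and the refinements of Kucerovsky--Ng in \cite{KucerovskyNg} and Gabe in \cite{Gabe:AbsorbingExtensions}) show that under the corona factorization property, the purely large condition is both necessary and sufficient for full nuclear absorption, so the CFP is precisely what lets us conclude $\varphi \oplus \theta \sim \varphi$ rather than merely a weaker stable or approximate domination.

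\medskip

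\emph{The main obstacle} I anticipate is the bookkeeping in the nuclear setting: the equivalence $\sim$ demands unitaries $u_n$ implementing the conjugation with the \emph{difference} $\varphi(a) - u_n \theta(a) u_n^*$ lying in $B \otimes \mathcal{K}$ (not merely small in norm), and $\theta$ ranges over all weakly nuclear representations, so the absorption must be compatible with the weakly nuclear structure throughout. Matching the weakly-nuclear Cuntz-pair formalism of Section~\ref{sec:KKTheory} against the Hilbert-module language in which \cite{ElliottKucerovsky} is phrased is the delicate part; the resolution is that weak nuclearity of $\varphi$ and $\theta$ ensures the associated Kasparov modules are genuinely nuclear, so the Elliott--Kucerovsky machinery applies verbatim once one checks the purely large hypothesis. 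Since this theorem is being quoted as a known external result, the proof is essentially a citation together with the observation that unitizable fullness is exactly the hypothesis guaranteeing the purely large condition for $B \otimes \mathcal{K}$.
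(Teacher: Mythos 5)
Your overall shape (reduce to an extension statement, get purely largeness, cite Elliott--Kucerovsky) matches the paper's proof, which is indeed a citation-stitching argument, but you have the logical role of the corona factorization property exactly backwards, and this is a genuine gap. You claim that unitizable fullness together with stability of $B \otimes \mathcal{K}$ already yields the purely large condition (``the stabilization \ldots supplies the stability automatically''), and that the CFP is then what upgrades Elliott--Kucerovsky absorption to the relation $\sim$. In fact the implication ``full $\Rightarrow$ purely large'' is precisely what the CFP buys and is false for general $\sigma$-unital $B$: if fullness alone produced the full stable subalgebras inside the relevant hereditary subalgebras, the CFP hypothesis in the statement would be vacuous, whereas there exist stable $\sigma$-unital C*-algebras without the CFP for which full extensions need not be absorbing. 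The paper's proof uses the CFP at exactly this point: by \cite{KucerovskyNg}, the Busby map $q\varphi$ of the full (after unitization) representation is a purely large extension, and then the main result of \cite{ElliottKucerovsky} --- which requires no CFP at all --- gives that $q\varphi$ is unitally nuclearly absorbing \emph{as an extension}. So the step you wave through (``fullness is exactly the hypothesis guaranteeing the purely large condition'') is the one place the CFP is indispensable, and the step you assign to the CFP does not use it.

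The second issue is the passage from extension-level absorption (a single multiplier unitary $u$ with $u(\varphi(a) \oplus \theta(a))u^* - \varphi(a) \in B \otimes \mathcal{K}$, with no norm control) to the relation $\sim$ of Definition \ref{defn:Absorbing}, which demands a \emph{sequence} of unitaries achieving simultaneously norm convergence and exact membership of the differences in $B \otimes \mathcal{K}$. You correctly identify this as the delicate point, but your proposed resolution (``weak nuclearity \ldots ensures the associated Kasparov modules are genuinely nuclear, so the Elliott--Kucerovsky machinery applies verbatim'') does not address it: nuclearity of the modules has nothing to do with trading one exact unitary equivalence modulo the ideal for an approximate one in norm. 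The paper bridges this gap by replacing an arbitrary unital weakly nuclear $\theta$ with its infinite repeat $\theta_\infty = \sum_n s_n \theta(\cdot) s_n^*$, applying extension-level absorption to $\theta_\infty$, invoking the equivalence of (iv) and (v) in Theorem 3.4 of \cite{GabeRuiz:AbsorbingCones} to convert this into $\varphi \oplus \theta_\infty \sim \varphi$, and finally using that $\theta_\infty \oplus \theta$ is unitarily equivalent to $\theta_\infty$ to get $\varphi \oplus \theta \sim \varphi \oplus \theta_\infty \oplus \theta \sim \varphi \oplus \theta_\infty \sim \varphi$. You would also need to record the initial reduction: adding units to pass from unitizably full representations to unital full ones, which is the reason the hypothesis is phrased in terms of $\varphi^\dag$.
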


\begin{proof}
After adding units, it is enough to show that every unital, full representation $\varphi : A \rightarrow M(B \otimes \mathcal{K})$ is unitally nuclearly absorbing.  Let $q : M(B \otimes \mathcal{K}) \rightarrow M(B \otimes \mathcal{K}) / (B \otimes \mathcal{K})$ denote the quotient map.  By \cite{KucerovskyNg}, $q \varphi$ is a purely large extension, and hence, by the main result of \cite{ElliottKucerovsky}, $q \varphi$ is unitally nuclearly absorbing as an extension.

For any unital, weakly nuclear representation $\psi : A \rightarrow M(B \otimes \mathcal{K})$, there is a unitary $u \in M(B \otimes \mathcal{K})$ such that
\[ u(\varphi(a) \oplus \psi_\infty(a))u^* - \psi_\infty(a) \in B \otimes \mathcal{K} \]
for all $a \in A$.  By the equivalence of (iv) and (v) in Theorem 3.4 of \cite{GabeRuiz:AbsorbingCones}, $\varphi \oplus \psi_\infty \sim \varphi$.  As $\psi_\infty \oplus \psi$ and $\psi_\infty$ are unitarily equivalent,
\[ \varphi \oplus \psi \sim \varphi \oplus \psi_\infty \oplus \psi \sim \varphi \oplus \psi_\infty \sim \varphi, \]
so $\varphi$ is unitally nuclearly absorbing.
\end{proof}

Given two representations $\varphi, \psi : A \rightarrow M(B \otimes \mathcal{K})$, write $\varphi \sim_{\mathrm{asymp}} \psi$ if there is a norm continuous family $(u_t)_{t \geq 0} \subseteq M(B \otimes \mathcal{K})$ of unitaries such that
\begin{enumerate}
  \item $\|\varphi(a) - u_t \psi(a) u_t^* \| \rightarrow 0$ as $t \rightarrow \infty$, and
  \item $\varphi(a) - u_t \psi(a) u_t^* \in B \otimes \mathcal{K}$ for all $t \geq 0$
\end{enumerate}
for all $a \in A$.

The following folklore result shows nuclearly absorbing representations also satisfy a stronger asymptotic absorption condition.

\begin{proposition}\label{prop:AsymptoticAbsorbing}
Suppose $A$ and $B$ are $C^*$-algebras such that $A$ is separable and $B$ is $\sigma$-unital.  If $\varphi : A \rightarrow M(B \otimes \mathcal{K})$ is a nuclearly absorbing representation and $\psi : A \rightarrow M(B \otimes \mathcal{K})$ is a weakly nuclear representation, then $\varphi \oplus \psi \sim_\mathrm{asymp} \varphi$.
\end{proposition}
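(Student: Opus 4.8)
The plan is to exploit that the hypothesis already supplies the \emph{approximate} statement: by Definition~\ref{defn:Absorbing}, $\varphi$ being nuclearly absorbing gives $\varphi \oplus \theta \sim \varphi$, so the entire content of the proposition is to upgrade $\sim$ to $\sim_\mathrm{asymp}$, i.e.\ to replace the \emph{sequence} of unitaries witnessing approximate unitary equivalence by a norm-continuous \emph{path}, while keeping the differences $(\varphi \oplus \theta)(a) - u_t \varphi(a) u_t^*$ inside $B \otimes \mathcal{K}$ for every $t$ and not merely in the limit. I would first record that $\sim_\mathrm{asymp}$ is an equivalence relation (reflexivity via the constant path $u_t = 1$; symmetry by passing to the adjoint path; transitivity by concatenating and reparametrising paths, the two defining conditions being preserved because $B \otimes \mathcal{K}$ is an ideal), so that the conclusion may be approached up to the usual manipulations with Cuntz sums.

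Next I would observe that $\varphi \oplus \theta$ is itself nuclearly absorbing: for any weakly nuclear $\rho$, one has $(\varphi \oplus \theta) \oplus \rho = \varphi \oplus (\theta \oplus \rho) \sim \varphi \sim \varphi \oplus \theta$, using that $\theta \oplus \rho$ is weakly nuclear and that $\sim$ is transitive. Writing $\theta_\infty := \bigoplus_{n \geq 1} \theta$, realised as $\sum_n t_n \theta(\cdot) t_n^*$ for isometries $t_n \in M(B \otimes \mathcal{K})$ with orthogonal ranges summing strictly to $1$ (again weakly nuclear), the problem reduces, via transitivity of $\sim_\mathrm{asymp}$, to the single assertion that every nuclearly absorbing $\psi$ satisfies $\psi \sim_\mathrm{asymp} \psi \oplus \theta_\infty$. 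Indeed, applying this to $\psi = \varphi$ and to $\psi = \varphi \oplus \theta$, and combining the latter with the exact unitary equivalence $\theta \oplus \theta_\infty \cong \theta_\infty$ (a relabelling of countably many identical summands, implemented by a fixed multiplier unitary, hence a constant $\sim_\mathrm{asymp}$-path with zero difference), yields $\varphi \oplus \theta \sim_\mathrm{asymp} \varphi \oplus \theta_\infty \sim_\mathrm{asymp} \varphi$.

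The heart is thus the asymptotic self-absorption statement $(\star)$: every nuclearly absorbing $\psi$ satisfies $\psi \sim_\mathrm{asymp} \psi \oplus \theta_\infty$. Here I would combine the approximate equivalence $\psi \oplus \theta_\infty \sim \psi$ from Definition~\ref{defn:Absorbing} with a rotation homotopy that continuously absorbs copies of $\theta$ into the infinite tail. Concretely, using the self-similarity $\theta_\infty \cong \theta \oplus \theta_\infty$ together with the standard $2 \times 2$ rotation unitaries, one builds on each interval $[n, n+1]$ a norm-continuous family of unitaries rotating one further copy of $\theta$ ``past'' $t_1, \dots, t_n$ into the tail; since each such rotation alters only finitely many summands, the corresponding differences lie in $B \otimes \mathcal{K}$ at every parameter, and the conjugates converge in the point-norm topology even though the unitaries themselves converge only strictly to a shift. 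Stitching these pieces together by a one-sided Elliott-type intertwining along a path — using the discrete unitaries furnished by $\sim$ to realign at the integer times and the rotation families to interpolate continuously in between — produces the single continuous path witnessing $(\star)$.

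I expect the main obstacle to be precisely the simultaneous control demanded along the \emph{entire} path: norm-convergence of the conjugates together with membership of the differences in $B \otimes \mathcal{K}$ at every $t$. Path-connectedness of the unitary group of $M(B \otimes \mathcal{K})$, already invoked in Section~\ref{sec:KKTheory}, is by itself insufficient, since an arbitrary path joining two of the unitaries delivered by $\sim$ will in general leave the ideal; the rotation homotopy is engineered so that every intermediate conjugation differs from its target by a finitely supported, hence ideal, perturbation. The one genuinely delicate technical point should be verifying the claimed point-norm convergence of the swept path as $t \to \infty$ — that pushing the copies of $\theta$ to infinity really converges to $\psi$ — which is where the weak nuclearity of $\theta$ and the strict (rather than norm) convergence of the ampliating isometries must be handled with care.
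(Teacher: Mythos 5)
Your overall architecture coincides with the paper's proof: form the infinite repeat $\theta_\infty(\cdot) = \sum_n s_n \theta(\cdot) s_n^*$, observe that $\theta \oplus \theta_\infty$ is (exactly) unitarily equivalent to $\theta_\infty$, and run the chain $\varphi \oplus \theta \sim_\mathrm{asymp} \varphi \oplus \theta_\infty \oplus \theta \sim_\mathrm{asymp} \varphi \oplus \theta_\infty \sim_\mathrm{asymp} \varphi$. The divergence is at the single point where all the content of the proposition sits: your claim $(\star)$, that the approximate equivalence $\psi \oplus \theta_\infty \sim \psi$ upgrades to $\psi \oplus \theta_\infty \sim_\mathrm{asymp} \psi$. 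The paper does not prove this statement; it quotes it, as the equivalence of (v) and (vi) in Theorem 3.4 of \cite{GabeRuiz:AbsorbingCones}, applied to $\varphi \oplus \theta_\infty \sim \varphi$. You propose to reprove it from scratch, and that is where your argument has a genuine gap.

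The mechanism you describe for $(\star)$ --- ``rotating one further copy of $\theta$ past $t_1, \dots, t_n$ into the tail'' --- is vacuous. A rotation unitary $R_t$ acting between two summands that carry the \emph{same} representation has scalar entries relative to that decomposition and therefore commutes with $\theta \oplus \theta$; a direct computation shows $R_t(\theta \oplus \theta)(a)R_t^* = (\theta \oplus \theta)(a)$ for every $t$. So conjugating by such rotations moves nothing: no copy of $\theta$ is ever absorbed this way, and the path it produces is constant. The genuine difficulty is concentrated entirely in the step you defer to ``Elliott-type intertwining along a path'': to pass from the sequence $(u_n)$ furnished by $\sim$ to a continuous path, one must connect each $v_n := u_n^* u_{n+1}$ --- a unitary that approximately commutes with $\psi$ modulo $B \otimes \mathcal{K}$ --- to $1$ through unitaries whose conjugation action on $\psi$ stays a small ideal perturbation at \emph{every} parameter. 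The $2 \times 2$ rotation trick does achieve this, but only for unitaries of the form $w \oplus w^*$ relative to a decomposition of the underlying representation into two copies of the \emph{same} representation; your sketch neither exhibits such a doubled structure nor explains how to reduce $v_n$ to that form (this is where the infinite repeat must really be used, e.g.\ via an Arveson-type factorization $v \oplus 1 \oplus 1 \oplus \cdots = (v \oplus v^* \oplus v \oplus v^* \oplus \cdots)(1 \oplus v \oplus v^* \oplus v \oplus \cdots)$ together with a rearrangement placing these corrections on summands carrying identical representations). Intertwining presupposes exactly this control; it does not create it. As written the argument does not close: either carry out that folklore argument in full --- it is a genuine piece of work, the content of Theorem 3.4 of \cite{GabeRuiz:AbsorbingCones} --- or do what the paper does and cite it.
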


\begin{proof}
As $\varphi$ is nuclearly absorbing, $\varphi \oplus \psi_\infty \sim \varphi$, and hence $\varphi \oplus \psi_\infty \sim_\mathrm{asymp} \varphi$ by the equivalence of (v) and (vi) in Theorem 3.4 of \cite{GabeRuiz:AbsorbingCones}.  Now,
\[ \varphi \oplus \psi \sim_\mathrm{asymp} \varphi \oplus \psi_\infty \oplus \psi \sim_\mathrm{asymp} \varphi \oplus \psi_\infty \sim_\mathrm{asymp} \varphi. \]
\end{proof}

\subsection{Destabilizing $KK$-theory}

Following Dadarlat and Eilers in \cite{DadarlatEilers:CuntzPairs}, two representations $\varphi, \psi : A \rightarrow M(B \otimes \mathcal{K})$ are \emph{properly asymptotically unitarily equivalent}, written $\varphi \approxeq \psi$, if there is a norm continuous family of unitaries $(u_t)_{t \geq 0}$ in $B \otimes \mathcal{K} + \mathbb{C} 1_{M(B \otimes \mathcal{K})} \subseteq M(B \otimes \mathcal{K})$ such that
\begin{enumerate}
  \item $\| \varphi(a) - u_t \psi(a) u_t^* \| \rightarrow 0$ as $t \rightarrow \infty$ and
  \item $\varphi(a) - u_t \psi(a) u_t^* \in B \otimes \mathcal{K}$ for all $t \geq 0$
\end{enumerate}
for all $a \in A$.

The word \emph{proper} reflects that the path of unitaries is taken from the minimal unitization of $B \otimes \mathcal{K}$ instead of the multiplier algebra.  This is a subtle, but very critical, difference with the relation $\sim_\mathrm{asymp}$ above.  The following result, which is essentially due to Dadarlat and Eilers in \cite{DadarlatEilers:CuntzPairs}, shows the relevance of proper asymptotic equivalence in $KK$-theory.  This is in stark contrast with Proposition \ref{prop:AsymptoticAbsorbing} above, which shows the relation $\sim_\mathrm{asymp}$ is a rather weak equivalence relation on representations.

\begin{theorem}\label{thm:DadarlatEilers}
Suppose $A$ is a separable $C^*$-algebra, $B$ is a $\sigma$-unital $C^*$-algebra, and $(\varphi, \psi) \in \mathcal{E}_\mathrm{nuc}(A, B)$ is a Cuntz pair.  The following are equivalent:
\begin{enumerate}
  \item $[\varphi, \psi] = 0 \in KK_\mathrm{nuc}(A, B)$;
  \item there is a weakly nuclear representation $\theta : A \rightarrow M(B \otimes \mathcal{K})$ such that $\varphi \oplus \theta \approxeq \psi \oplus \theta$;
  \item for any weakly nuclear, nuclearly absorbing representation $\theta : A \rightarrow M(B \otimes \mathcal{K})$, $\varphi \oplus \theta \approxeq \psi \oplus \theta$.
\end{enumerate}
\end{theorem}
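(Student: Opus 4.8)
The plan is to prove the cycle of implications $(1) \Rightarrow (3) \Rightarrow (2) \Rightarrow (1)$, with $(1) \Rightarrow (3)$ carrying essentially all of the content and the remaining two implications being formal. For $(3) \Rightarrow (2)$ I only need that a weakly nuclear, nuclearly absorbing representation $\theta : A \rightarrow M(B \otimes \mathcal{K})$ exists at all; this holds for every separable $A$ and $\sigma$-unital $B$ by the standard theory of absorbing representations (for instance, an infinite amplification of a faithful weakly nuclear representation is nuclearly absorbing), so applying $(3)$ to such a $\theta$, which is in particular weakly nuclear, yields $(2)$.

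For $(2) \Rightarrow (1)$ I would show directly that proper asymptotic unitary equivalence of a Cuntz pair forces the vanishing of its class, and then invoke additivity. Write $\alpha = \varphi \oplus \theta$ and $\beta = \psi \oplus \theta$, and let $(u_t)_{t \geq 0}$ in the unitization $(B \otimes \mathcal{K})^\dagger$ implement $\alpha \approxeq \beta$. The key observation is that, because each $u_t$ lies in the unitization (its scalar part has modulus one), $u_t \beta(a) u_t^* - \beta(a) \in B \otimes \mathcal{K}$ for all $a$ and $t$; together with $\alpha(a) - \beta(a) \in B \otimes \mathcal{K}$ this lets me build a Cuntz pair $(\Phi, \Psi) \in \mathcal{E}_\mathrm{nuc}(A, C([0,1], B))$ with $\Phi(a) \equiv \alpha(a)$ and $\Psi(a)(s) = u_{t(s)} \beta(a) u_{t(s)}^*$ for a reparametrization with $t(s) \rightarrow \infty$ as $s \rightarrow 1$, setting $\Psi(a)(1) = \alpha(a)$. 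Norm-continuity at the endpoint is exactly the asymptotic equivalence, so $(\Phi, \Psi)$ is a homotopy identifying $[\alpha, \beta]$ with the class of a degenerate pair, i.e.\ $[\alpha, \beta] = 0$. Finally $(\alpha, \beta) = (\varphi, \psi) \oplus (\theta, \theta)$ as a Cuntz sum and $[\theta, \theta] = 0$, so additivity of the class under Cuntz sums gives $[\varphi, \psi] = 0$.

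The substantive implication $(1) \Rightarrow (3)$ is a reformulation of Theorem 3.10 of \cite{DadarlatEilers:CuntzPairs}, and my plan is to transport that argument to the nuclear bifunctor $KK_\mathrm{nuc}$. Assuming $[\varphi, \psi] = 0$, the Cuntz pair is homotopic to a degenerate pair; given a weakly nuclear, nuclearly absorbing $\theta$, one uses the absorbing property to dominate both legs and then runs a reparametrized two-sided Elliott intertwining to convert this homotopy into a norm-continuous path of unitaries witnessing $\varphi \oplus \theta \approxeq \psi \oplus \theta$. The point to verify is that every step stays within the class of weakly nuclear representations and hence within $KK_\mathrm{nuc}$; the absorption inputs needed for this are precisely the ones already recorded in Theorem \ref{thm:ElliottKucerovskyGabe} and Proposition \ref{prop:AsymptoticAbsorbing} (through \cite{GabeRuiz:AbsorbingCones}), so the Dadarlat--Eilers constructions apply essentially verbatim.

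I expect the genuine difficulty to be concentrated in the \emph{properness} of the equivalence in $(1) \Rightarrow (3)$: arranging the implementing unitaries to lie in the unitization of $B \otimes \mathcal{K}$ rather than merely in $M(B \otimes \mathcal{K})$, which is exactly the difference between $\approxeq$ and the weaker relation $\sim_\mathrm{asymp}$ of the previous subsection. This refinement is the technical core of \cite{DadarlatEilers:CuntzPairs}, achieved by careful control of the intertwining unitaries along the homotopy, and my write-up would isolate it as the sole place where the cited theorem does genuine rather than formal work.
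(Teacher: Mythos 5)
Your architecture (a cycle $(1)\Rightarrow(3)\Rightarrow(2)\Rightarrow(1)$ with the weight on $(1)\Rightarrow(3)$) is not the paper's, and the difference matters. The paper never re-runs the Dadarlat--Eilers construction: it quotes Theorem 3.10 of \cite{DadarlatEilers:CuntzPairs}, whose statement (and proof) concerns the \emph{infinite repeat} $\theta_\infty$ of an absorbing representation, as giving the equivalence of (1) and (2) with the auxiliary statement (3$'$) ``$\varphi \oplus \theta_\infty \approxeq \psi \oplus \theta_\infty$ for every weakly nuclear, nuclearly absorbing $\theta$,'' and then proves $(3)\Leftrightarrow(3')$ by the bridge $\theta \sim_\mathrm{asymp} \theta \oplus \theta_\infty \cong \theta_\infty$ (Proposition \ref{prop:AsymptoticAbsorbing} plus the unitary equivalence of $\theta\oplus\theta_\infty$ with $\theta_\infty$), feeding this into Lemma 3.4 of \cite{DadarlatEilers:CuntzPairs}. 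This bridge from $\theta_\infty$ to an arbitrary nuclearly absorbing $\theta$ is exactly what your sketch of $(1)\Rightarrow(3)$ omits: asserting that the Dadarlat--Eilers constructions apply ``essentially verbatim'' with an arbitrary absorbing $\theta$ in place of $\theta_\infty$ papers over the one step that is genuinely new here (and is the reason the paper calls its statement a \emph{reformulation}). Since you already list Proposition \ref{prop:AsymptoticAbsorbing} among your inputs, the repair is short, but as written this is a gap, not merely a different route.

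There are two further concrete problems in your ``formal'' implications. First, in $(3)\Rightarrow(2)$ your existence justification is false: an infinite amplification of a faithful weakly nuclear representation need not be nuclearly absorbing. For instance, take $A = \mathbb{C}$, $B = \mathbb{C}\oplus\mathbb{C}$, and $\theta(1) = (1,0)\otimes e$ for a rank-one projection $e \in \mathcal{K}$; every unitary of $M(B\otimes\mathcal{K}) \cong M(\mathcal{K})\oplus M(\mathcal{K})$ respects the two coordinates, so $\theta_\infty$ cannot absorb any representation with non-zero second coordinate (the difference of the relevant projections has norm one, never small). Existence of a weakly nuclear, nuclearly absorbing representation is true, but it is Kasparov's absorption theorem (the representation $a \mapsto \pi(a)\otimes 1$ for a faithful $\pi$ on a separable Hilbert space with $\pi(A)\cap\mathcal{K} = 0$); see \cite{ElliottKucerovsky} or \cite{Gabe:AbsorbingExtensions}. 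Second, in $(2)\Rightarrow(1)$ your homotopy joins $(\alpha, \mathrm{ad}(u_{t(0)})\beta)$ to the degenerate pair $(\alpha,\alpha)$, so it proves $[\alpha, \mathrm{ad}(u_0)\beta] = 0$, not $[\alpha,\beta] = 0$. You cannot normalize $u_0 = 1$: the unitary group of the unitization of $B\otimes\mathcal{K}$ is not connected in general (the obstruction is $K_1(B)$), so you need the additional lemma that conjugating one leg of a Cuntz pair by a unitary in the unitization does not change its class; this is standard and can be extracted from \cite{DadarlatEilers:CuntzPairs}, but it must be invoked. Both points are repairable, but neither is cosmetic.
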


\begin{proof}
By Theorem 3.10 of \cite{DadarlatEilers:CuntzPairs}, (1) and (2) are equivalent to
\begin{enumerate}
  \item[(3')] for any weakly nuclear, nuclearly absorbing representation $\theta : A \rightarrow M(B \otimes \mathcal{K})$, $\varphi \oplus \theta_\infty \approxeq \psi \oplus \theta_\infty$.
\end{enumerate}
Hence it suffices to show (3) is equivalent to (3').  If $\theta$ is weakly nuclear and nuclearly absorbing, then $\theta_\infty$ is weakly nuclear, and hence $\theta \oplus \theta_\infty \sim_\mathrm{asymp} \theta$ by Proposition \ref{prop:AsymptoticAbsorbing}.  So $\theta_\infty \sim_\mathrm{asymp} \theta$, and the result follows from Lemma 3.4 in \cite{DadarlatEilers:CuntzPairs}.
\end{proof}

The next two results in this section will allow us to control the stabilizations appearing in the previous theorem when all representations considered are assumed to be nuclearly absorbing.  These results form a critical part of the classification results in Section \ref{sec:UltraproductClassification} and play a role analogous to that of the stable uniqueness theorem used in earlier classification results.

\begin{proposition}\label{prop:PrescribedCuntzPair}
Suppose $A$ and $B$ are $C^*$-algebras such that $A$ is separable and $B$ is $\sigma$-unital.  If $x \in KK_\mathrm{nuc}(A, B)$ and $\psi : A \rightarrow M(B \otimes \mathcal{K})$ is a weakly nuclear, nuclearly absorbing representation, then there is a weakly nuclear, nuclearly absorbing representation $\varphi : A \rightarrow M(B \otimes \mathcal{K})$ such that $(\varphi, \psi)$ is a Cuntz pair in $\mathcal{E}_\mathrm{nuc}(A, B)$ and $x = [\varphi, \psi]$.
\end{proposition}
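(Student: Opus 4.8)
The plan is to realize $x$ by taking an arbitrary Cuntz pair representing it and conjugating so that its second coordinate is carried onto the fixed absorbing representation $\psi$. First I would choose any Cuntz pair $(\alpha, \beta) \in \mathcal{E}_{\mathrm{nuc}}(A, B)$ with $[\alpha, \beta] = x$; by definition both $\alpha$ and $\beta$ are weakly nuclear and $\alpha(a) - \beta(a) \in B \otimes \mathcal{K}$ for all $a \in A$. Since the diagonal pair $(\psi, \psi)$ represents $0$, forming Cuntz sums gives $[\psi \oplus \alpha, \psi \oplus \beta] = x$, so I may assume from the outset that my representative has the form $(\psi \oplus \alpha, \psi \oplus \beta)$ with $\beta$ weakly nuclear. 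The point of stabilising by $\psi$ is that the second coordinate now visibly contains a copy of the absorbing representation.

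Next I would use absorption to push $\psi \oplus \beta$ back onto $\psi$. Since $\beta$ is weakly nuclear and $\psi$ is nuclearly absorbing, Proposition \ref{prop:AsymptoticAbsorbing} yields $\psi \oplus \beta \sim_{\mathrm{asymp}} \psi$; that is, there is a norm-continuous family of unitaries $(u_t)_{t \geq 0}$ in $M(B \otimes \mathcal{K})$ with $\|(\psi \oplus \beta)(a) - u_t \psi(a) u_t^*\| \to 0$ and $(\psi \oplus \beta)(a) - u_t \psi(a) u_t^* \in B \otimes \mathcal{K}$ for every $a \in A$. Conjugation by a unitary multiplier preserves both weak nuclearity and the ideal $B \otimes \mathcal{K}$, so I would set $\varphi := u_t^{*}(\psi \oplus \alpha) u_t$ for a fixed $t$. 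Writing $\varphi(a) - \psi(a) = u_t^{*}\big[(\psi \oplus \alpha)(a) - (\psi \oplus \beta)(a)\big]u_t + u_t^{*}\big[(\psi \oplus \beta)(a) - u_t \psi(a) u_t^{*}\big]u_t$ exhibits the difference as a sum of two elements of $B \otimes \mathcal{K}$, so $(\varphi, \psi)$ is a genuine Cuntz pair in $\mathcal{E}_{\mathrm{nuc}}(A, B)$ for every $t$.

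It remains to check $[\varphi, \psi] = x$, and this is where I expect the real work to lie. On the one hand, $t \mapsto \big(u_t^{*}(\psi \oplus \alpha)u_t,\, \psi\big)$ is a norm-continuous path of Cuntz pairs, so all of these pairs---in particular $(\varphi, \psi)$---share a single class $c \in KK_{\mathrm{nuc}}(A, B)$ independent of $t$. On the other hand, each $\big(u_t^{*}(\psi \oplus \alpha)u_t,\, u_t^{*}(\psi \oplus \beta)u_t\big)$ is a unitary conjugate of $(\psi \oplus \alpha, \psi \oplus \beta)$ and hence has class $x$. Thus the claim $c = x$ amounts to showing that the ``correction'' pairs $\big(u_t^{*}(\psi \oplus \beta)u_t,\, \psi\big)$ are $KK_{\mathrm{nuc}}$-trivial. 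The main obstacle is precisely this last point: the second coordinates $u_t^{*}(\psi \oplus \beta)u_t$ converge to $\psi$ only pointwise on $A$, not uniformly on the unit ball, so a naive small-perturbation homotopy is unavailable---indeed it is exactly the absence of uniform estimates that forces the use of absorbing representations here. I would therefore resolve it not by perturbation but by feeding the asymptotic equivalence into the homotopy machinery of Dadarlat and Eilers: letting $t \to \infty$ and invoking Theorem \ref{thm:DadarlatEilers} (together with the additivity of the Cuntz-pair construction along a common first coordinate) to identify the correction class with $[\psi, \psi] = 0$, whence $c = x$.
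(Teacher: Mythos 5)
Up to the last step your argument tracks the paper's proof: the paper likewise picks a representative $(\theta,\rho)$ of $x$, stabilises by $\psi$, applies Proposition \ref{prop:AsymptoticAbsorbing} to $\rho\oplus\psi$, and takes $\varphi := \mathrm{ad}(u_0)(\theta\oplus\psi)$; so your construction of $\varphi$, the verification that $(\varphi,\psi)$ is a Cuntz pair, and the reduction to killing the correction class are all sound (granting the standard cocycle identity $[\mu,\lambda]=[\mu,\nu]+[\nu,\lambda]$, which the paper never needs). The genuine gap is in how you propose to kill $[\,u_t^*(\psi\oplus\beta)u_t,\,\psi\,]$. Theorem \ref{thm:DadarlatEilers} cannot be applied here: its condition (2) requires \emph{proper} asymptotic unitary equivalence $\approxeq$, whose conjugating unitaries must come from the unitization of $B\otimes\mathcal{K}$, whereas Proposition \ref{prop:AsymptoticAbsorbing} only provides the relation $\sim_\mathrm{asymp}$, whose unitaries live in $M(B\otimes\mathcal{K})$. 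The distinction is not cosmetic: conjugating a single coordinate of a Cuntz pair by a multiplier unitary can change its class. For $A=B=\mathbb{C}$, take $P$ the projection onto $\overline{\mathrm{span}}\{e_n : n\geq 0\}\subseteq\ell^2(\mathbb{Z})$ and $u$ the bilateral shift; then $(uPu^*,P)$ is a one-sided multiplier-unitary conjugate of $(P,P)$, yet its class in $KK(\mathbb{C},\mathbb{C})\cong\mathbb{Z}$ is $\pm1\neq 0$ (Brown--Douglas--Fillmore essential codimension). This failure is exactly why Dadarlat and Eilers insist on ``proper'' equivalence, so there is no route from $\sim_\mathrm{asymp}$ to the hypothesis of Theorem \ref{thm:DadarlatEilers} that does not presuppose the very vanishing you are trying to prove.

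The irony is that the step you dismiss as unavailable is the correct conclusion, and it is how the paper finishes. A homotopy of Cuntz pairs is a Cuntz pair over $C([0,1],B)$, and this requires only that, for each \emph{fixed} $a\in A$, the paths $t\mapsto\Phi_t(a)$ be norm continuous with differences in $B\otimes\mathcal{K}$; no uniformity over the unit ball of $A$ is needed. (You invoke exactly this principle yourself when you declare the class of $(u_t^*(\psi\oplus\alpha)u_t,\psi)$ independent of $t$.) Since $\|u_t^*(\psi\oplus\beta)(a)u_t-\psi(a)\|\to 0$ for every $a$, the path $t\mapsto\bigl(u_t^*(\psi\oplus\beta)u_t,\,\psi\bigr)$ on $[s,\infty)$ extends pointwise-norm continuously to $[s,\infty]$ with value $(\psi,\psi)$ at infinity; after reparametrising $[s,\infty]\cong[0,1]$, this extended path \emph{is} a homotopy of Cuntz pairs, giving $[u_s^*(\psi\oplus\beta)u_s,\psi]=[\psi,\psi]=0$ directly. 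This is precisely the paper's argument (there the first coordinate is held fixed and the second coordinate is homotoped along $\mathrm{ad}(u_t)(\rho\oplus\psi)$, $t\in[0,\infty]$, ending at $\psi$). With that substitution your proof closes up and coincides, in essence, with the paper's.
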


\begin{proof}
Let $x \in KK_\mathrm{nuc}(A, B)$ and $\psi$ be given.  There is a Cuntz pair $(\theta, \rho)$ in $\mathcal{E}_\mathrm{nuc}(A, B)$ such that $x = [\theta, \rho]$.  By Proposition \ref{prop:AsymptoticAbsorbing}, there is a norm continuous family $(u_t)_{t \geq 0}$ of unitaries in $M(B \otimes \mathcal{K})$ such that
\begin{enumerate}
  \item $\| \psi(a) - u_t(\rho \oplus \psi)(a)u_t^* \| \rightarrow 0$ as $t \rightarrow \infty$ and
  \item $\psi(a) - u_t(\rho \oplus \psi)(a)u_t^* \in B \otimes \mathcal{K}$ for all $t \geq 0$
\end{enumerate}
for all $a \in A$.

For each $t \geq 0$ and $a \in A$, $u_t(\rho \oplus \psi)(a) u_t^*$ and $u_0 (\rho \oplus \psi)(a) u_0^*$ differ by an element of $B \otimes \mathcal{K}$ as both elements differ from $\psi(a)$ by an element of $B \otimes \mathcal{K}$.  As $(\theta, \rho)$ is a Cuntz pair, we have
\[ u_0((\theta \oplus \psi)(a) - (\rho \oplus \psi)(a))u_0^* \in B \otimes \mathcal{K} \]
for all $a \in A$.  Hence $(\mathrm{ad}(u_0)(\theta \oplus \psi), \mathrm{ad}(u_t) (\rho \oplus \psi))$ is a Cuntz pair for all $t \geq 0$ and defines a homotopy\footnote{Here the homotopy is defined on $[0, \infty]$ in place of $[0, 1]$.} between the Cuntz pairs $(\mathrm{ad}(u_0)(\theta \oplus \psi), \mathrm{ad}(u_0)(\rho \oplus \psi))$ and $(\mathrm{ad}(u_0)(\theta \oplus \psi), \psi)$.  Now,
\[ x = [\theta \oplus \psi, \rho \oplus \psi] = [\mathrm{ad}(u_0)(\theta \oplus \psi), \mathrm{ad}(u_0)(\rho \oplus \psi)] = [\mathrm{ad}(u_0)(\theta \oplus \psi), \psi]. \]
To complete the proof, define $\varphi = \mathrm{ad}(u_0)(\theta \oplus \psi)$.
\end{proof}

In the uniqueness portion of Theorem D, we will need a way to relate two weakly nuclear representations $\varphi, \psi : A \rightarrow M(B \otimes \mathcal{K})$ when $(\varphi, \psi)$ forms a Cuntz pair and $[\varphi, \psi] = 0$ in $KK_\mathrm{nuc}(A, B)$.  Ideally, if $\varphi$ and $\psi$ are nuclearly absorbing, then  $[\varphi, \psi] = 0$ would imply $\varphi \approxeq \psi$.  This is known to be the case when $B = \mathcal{K}$ (see Theorem 3.12 in \cite{DadarlatEilers:CuntzPairs}) but is not known in general.  The following technical variation will be sufficient for our purposes.  A stronger result of this form will appear in forthcoming work of the author with Carri\'{o}n, Gabe, Tikuisis, and White.

\begin{proposition}\label{prop:KKUniqueness}
Suppose $A$ is a separable $C^*$-algebra, $E$ is a separable, unital, $\mathcal{Q}$-stable $C^*$-algebra, and $I \subseteq E$ is an ideal such that $I \otimes \mathcal{K} \cong I$.  Suppose $\varphi, \psi : A \rightarrow E$ are nuclear $^*$-homomorphisms such that $\varphi(a) - \psi(a) \in I$ for all $a \in A$. Let $\lambda : E \rightarrow M(I)$ be the canonical $^*$-homomorphism and note that $(\lambda \varphi, \lambda \psi) \in \mathcal{E}_\mathrm{nuc}(A, I)$.

If $\lambda \varphi$ and $\lambda \psi$ are nuclearly absorbing representations and $[\lambda \varphi, \lambda \psi] = 0 \in KK_\mathrm{nuc}(A, I)$, then there is a sequence of unitaries $(u_n)_{n=1}^\infty \subseteq E$ such that
\[ \| \varphi(a) - u_n \psi(a)u_n^* \| \rightarrow 0 \]
for all $a \in A$.
\end{proposition}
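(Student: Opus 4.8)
The plan is to feed the vanishing of the $KK$-class into the Dadarlat--Eilers theorem to obtain a \emph{stabilized} asymptotic unitary equivalence whose implementing unitaries already live in the unitization of $B$, and then to exploit the $\mathcal{Q}$-stability of $E$ to remove the stabilizing summand and deposit honest unitaries inside $E$.

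First I would set up bookkeeping. Since $B \cong B \otimes \mathcal{K}$, identify $M(B) = M(B \otimes \mathcal{K})$. As $B$ is an ideal of $E$ on which $1_E$ acts as the identity, the canonical map $\lambda : E \to M(B)$ is unital, so $B^\dag := B + \mathbb{C}1_E \subseteq E$; moreover every unitary of the form $1_E + b$ with $b \in B$ is a unitary of $E$, since $(1_E+b)^*(1_E+b) - 1_E$ lies in $B \cap B^\perp = 0$. Thus the conclusion reduces to producing, for each finite $\mathcal{F} \subseteq A$ and each $\varepsilon > 0$, a single unitary $u \in E$ with $\|\varphi(a) - u\psi(a)u^*\| < \varepsilon$ for all $a \in \mathcal{F}$; a diagonal sequence over $\mathcal{F} \uparrow A$ and $\varepsilon \downarrow 0$ then yields the desired $(u_n)$.

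Since $[\lambda\varphi, \lambda\psi] = 0$ in $KK_\mathrm{nuc}(A,B)$, the implication $(1) \Rightarrow (2)$ of Theorem \ref{thm:DadarlatEilers} produces a weakly nuclear representation $\theta : A \to M(B)$ and a norm-continuous path of unitaries $(v_t)_{t \geq 0}$ in $B^\dag$, which (after adjusting by a scalar) we may take in $1_{M(B)} + B$, such that $\|(\lambda\varphi \oplus \theta)(a) - v_t(\lambda\psi \oplus \theta)(a)v_t^*\| \to 0$ for every $a$, the Cuntz sums being formed with fixed isometries $s_1, s_2 \in M(B)$. As $(\lambda\varphi \oplus \theta, \lambda\psi \oplus \theta)$ is a Cuntz pair and $v_t - 1_{M(B)} \in B$, the difference $(\lambda\varphi \oplus \theta)(a) - v_t(\lambda\psi \oplus \theta)(a)v_t^*$ in fact stays in the ideal $B$ for all $t$ and $a$; Proposition \ref{prop:AsymptoticAbsorbing} may be used to arrange $\theta$ to be an infinite repeat, which is convenient for the next step.

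Finally I would destabilize. The essential point is that, although the Cuntz sum genuinely lives in $M(B)$ (the finite, unital algebra $E$ admits no proper isometries), the representation $\theta$ is weakly nuclear and hence, on $\mathcal{F}$ and up to $\varepsilon$, is approximated by a finite-dimensional representation factoring through some $\mathbb{M}_m$. Through $\mathcal{Q}$-stability, Lemma \ref{lemma:LocalDStability} supplies a unital, approximately multiplicative map $\mathbb{M}_m \to E$ whose range almost commutes with $\varphi(\mathcal{F})$ and $\psi(\mathcal{F})$; the associated internal inflations are then approximately inner in $E$. Feeding these approximately central realizations into the stabilized equivalence above and running an Elliott-type one-sided intertwining collapses the common summand $\theta$ on both sides and replaces the $M(B)$-level data by a single unitary $u \in B^\dag \subseteq E$ with $\|\varphi(a) - u\psi(a)u^*\| < \varepsilon$ on $\mathcal{F}$.

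I expect this last step to be the main obstacle. The stabilizing representation $\theta$ lives in $M(B)$, outside the finite algebra $E$, and the whole difficulty is to transfer the multiplier-level absorption furnished by Theorem \ref{thm:DadarlatEilers} into genuine unitaries of $E$; this is exactly where $\mathcal{Q}$-stability is indispensable, via the approximately central finite-dimensional subalgebras of Lemma \ref{lemma:LocalDStability} and the approximate innerness of inflations. Throughout the intertwining one must carefully keep every difference inside the ideal $B$, so that the unitaries produced remain of the form $1_E + b$ and hence lie in $B^\dag \subseteq E$ rather than merely in $M(B)$.
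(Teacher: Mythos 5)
Your argument stalls exactly where you predict it will, and the mechanism you propose for the destabilization step does not work. Two concrete failures. First, the claim that ``$\theta$ is weakly nuclear and hence, on $\mathcal{F}$ and up to $\varepsilon$, is approximated by a finite-dimensional representation factoring through some $\mathbb{M}_m$'' is false: weak nuclearity of $\theta : A \rightarrow M(B)$ only says the compressions $b^*\theta(\cdot)b$, $b \in B$, are nuclear completely positive maps; this yields completely positive factorizations through matrix algebras, never multiplicative ones, and norm approximation of a representation by finite-dimensional representations is a quasidiagonality-type property that the abstract $\theta$ produced by implication (1)$\Rightarrow$(2) of Theorem \ref{thm:DadarlatEilers} need not have (it may, for instance, be a faithful infinite repeat). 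Second, even granting such approximations, ``collapsing the common summand $\theta$'' in $\lambda\varphi\oplus\theta \approxeq \lambda\psi\oplus\theta$ is precisely the cancellation that stable uniqueness theorems forbid in general: the implementing unitaries do not even approximately commute with the projection $s_1s_1^*$ onto the first summand, so no compression or one-sided intertwining keeps the data inside $B + \mathbb{C}1$. Cancelling a stabilizer requires an absorption hypothesis, and it is telling that your proof never uses the standing assumption that $\lambda\varphi$ and $\lambda\psi$ are nuclearly absorbing.

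That unused hypothesis is the key to the paper's proof, which avoids your obstacle entirely: instead of (1)$\Rightarrow$(2) with an external $\theta$, apply (1)$\Rightarrow$(3) of Theorem \ref{thm:DadarlatEilers} twice, taking the absorbing stabilizer to be $\lambda\varphi$ and then $\lambda\psi$ themselves. This gives $\lambda\varphi\oplus\lambda\varphi \approxeq \lambda\psi\oplus\lambda\varphi$ and $\lambda\varphi\oplus\lambda\psi \approxeq \lambda\psi\oplus\lambda\psi$, in which every summand is the image of a map into $E$. Rewriting Cuntz sums as diagonal $2\times 2$ matrices via the isomorphism $M(B) \cong \mathbb{M}_2(M(B))$ determined by $s_1, s_2$, and chaining the two equivalences through the flip (a scalar matrix), one obtains unitaries in $\mathbb{M}_2(B + \mathbb{C}1_{M(B)})$ conjugating $\lambda\psi\oplus\lambda\psi$ approximately onto $\lambda\varphi\oplus\lambda\varphi$; since $\lambda$ is injective on $\mathbb{M}_2(B + \mathbb{C}1_E)$ and the relevant differences lie in $\mathbb{M}_2(B)$, this pulls back to an approximate unitary equivalence of $\varphi\oplus\varphi$ and $\psi\oplus\psi$ in $\mathbb{M}_2(E)$. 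Only then does $\mathcal{Q}$-stability enter, and differently from how you use it: a unital embedding $\mathbb{M}_2 \hookrightarrow \mathcal{Q}$ upgrades this to an equivalence of $\varphi\otimes 1_\mathcal{Q}$ and $\psi\otimes 1_\mathcal{Q}$ in $E \otimes \mathcal{Q}$, and unital *-homomorphisms $\theta_n : E\otimes\mathcal{Q} \rightarrow E$ with $\theta_n(x\otimes 1_\mathcal{Q}) \rightarrow x$ (Remark 2.7 of \cite{TomsWinter:SSA}) then give $\varphi \sim \psi$ in $E$. So your instinct that $\mathcal{Q}$-stability is what brings everything back inside $E$ is correct, but it is used to cancel a matrix amplification of the maps themselves, not to internalize an external multiplier-level stabilizer --- that problem never arises once the absorption hypothesis is used to choose the stabilizers to be $\lambda\varphi$ and $\lambda\psi$.
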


\begin{proof}
Since $[\lambda \varphi, \lambda \psi] = 0 \in KK_\mathrm{nuc}(A, I)$, $\lambda \varphi$ is nuclearly absorbing, and $\lambda \psi$ is weakly nuclear, we have $\lambda \varphi \oplus \lambda \varphi \approxeq \lambda \psi \oplus \lambda \varphi$ by Theorem \ref{thm:DadarlatEilers}.  Reversing the roles of $\varphi$ and $\psi$, we have $\lambda \varphi \oplus \lambda \psi \approxeq \lambda \psi \oplus \lambda \psi$.  In particular, there is a sequence $(u_n)_{n=1}^\infty \subseteq \mathbb{M}_2(I + \mathbb{C}1_{M(I)})$ such that
\[ \| (\lambda \varphi \oplus \lambda \varphi)(a) - u_n (\lambda \psi \oplus \lambda \psi)(a) u_n^* \| \rightarrow 0 \]
for all $a \in A$.  As the restriction of $\lambda$ to $\mathbb{M}_2(I + \mathbb{C}1_E)$ is injective, it follows that $\varphi \oplus \varphi$ and $\psi \oplus \psi$ are approximately unitarily equivalent as $^*$-homomorphisms $A \rightarrow \mathbb{M}_2(E)$.

As there is a unital embedding $\mathbb{M}_2 \hookrightarrow \mathcal{Q}$, $\varphi \otimes 1_\mathcal{Q}$ and $\psi \otimes 1_\mathcal{Q}$ are approximately unitarily equivalent as $^*$-homomorphisms $A \rightarrow E \otimes \mathcal{Q}$.  As $E$ is $\mathcal{Q}$-stable and $\mathcal{Q}$ is strongly self-absorbing, there is, by Remark 2.7 in \cite{TomsWinter:SSA}, a sequence $\theta_n : E \otimes \mathcal{Q} \rightarrow E$ of unital $^*$-homomorphisms such that $\theta_n(x \otimes 1_\mathcal{Q}) \rightarrow x$ for all $x \in E$.  An $\varepsilon/3$ argument implies that $\varphi$ and $\psi$ are approximately unitarily equivalent as $^*$-homomorphisms $A \rightarrow E$.
\end{proof}

\section{The Trace-Kernel Extension}\label{sec:TraceKernel}

Let $B$ be a simple, unital C$^*$-algebra with a unique trace $\tau_B$ and define the \emph{2-norm} on $B$ by $\|b\|_2 = \tau_B(b^*b)^{1/2}$ for all $b \in B$.  Let $\ell^\infty(B)$ denote the C$^*$-algebra of all bounded sequences in $B$ and, for a free ultrafilter $\omega$ on the natural numbers, define
\begin{align*}
   B_\omega &:= \ell^\infty(B) / \{ b \in \ell^\infty(B) : \lim_{n \rightarrow \omega} \|b_n\| = 0 \} \quad \text{and} \\
   B^\omega &:= \ell^\infty(B) / \{ b \in \ell^\infty(B) : \lim_{n \rightarrow \omega} \|b_n\|_2 = 0 \}.
\end{align*}

Since $\tau_B$ is contractive, $\|b\|_2 \leq \|b\|$ for all $b \in B$, and hence there is a natural extension
\[ \begin{tikzcd} 0 \arrow{r} & J_B \arrow{r}{j_B} & B_\omega \arrow{r}{q_B} & B^\omega \arrow{r} & 0 \end{tikzcd} \]
where $q_B$ is the canonical quotient map, $J_B = \ker(q_B)$, and $j_B$ is the inclusion map.  The C$^*$-algebra $J_B$ is referred to as the \emph{trace-kernel ideal} associated to $B$, and the extension is called the \emph{trace-kernel extension} associated to $B$.

The trace-kernel extension has been extensively studied in connection with the Toms-Winter conjecture (with a modified definition when $B$ has more than one trace); see \cite{BBSTWW, KirchbergRordam:TomsWinterConj, MatuiSato:ZStable, MatuiSato:DecompositionRank, Sato:TomsWinter, SatoWhiteWinter, TomsWhiteWinter} for example.  In the case $B = \mathcal{Q}$, this extension also has a crucial role in the proof of the quasidiagonality theorem (Theorem \ref{thm:TWW} above), where the basic strategy is to lift a trace-preserving $^*$-homomorphism into $\mathcal{Q}^\omega \cong \mathcal{R}^\omega$ along the quotient map $q_\mathcal{Q}$ to obtain a trace-preserving $^*$-homomorphism into $\mathcal{Q}_\omega$.   This was made more explicit in \cite{Schafhauser:Crelle} where the lift was obtained through extension theoretic methods.

Many of the properties of $J_\mathcal{Q}$ proved in \cite{Schafhauser:Crelle} also hold for $J_B$ for much more general C$^*$-algebras $B$ (Proposition \ref{prop:TraceKernelExtension} below).  The following definition is taken from \cite{Schafhauser:Crelle}.

\begin{definition}\label{defn:AdmissibleKernel}
A C$^*$-algebra $I$ is called an \emph{admissible kernel} if $I$ has real rank zero and stable rank one, $K_0(I)$ is divisible, $K_1(I) = 0$, the Murray-von Neumann semigroup $V(I)$ is almost unperforated, and every projection in $I \otimes \mathcal{K}$ is Murray-von Neumann equivalent to a projection in $I$.
\end{definition}

The following result collects the key properties of the trace-kernel extension needed in the next section.

\begin{proposition}\label{prop:TraceKernelExtension}
If $B$ is a simple, unital, $\mathcal{Q}$-stable $C^*$-algebra with a unique trace $\tau_B$ such that every quasitrace on $B$ is a trace and $K_1(B) = 0$, then
\begin{enumerate}
  \item $B^\omega$ is a $\mathrm{II}_1$-factor,
  \item $B_\omega$ has real rank zero and stable rank one, has a unique trace $\tau_{B_\omega}$, has strict comparison of positive elements with respect to the trace, is separably $\mathcal{Q}$-stable, and has trivial $K_1$-group, and
  \item $J_B$ is an admissible kernel.
\end{enumerate}
\end{proposition}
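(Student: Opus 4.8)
The plan is to first record the structural consequences of the hypotheses on $B$ itself, then bootstrap these to the ultrapowers $B^\omega$ and $B_\omega$, and finally read the properties of $J_B$ off the trace-kernel extension. Since $\mathcal{Q}$ is $\mathcal{Z}$-stable, $B$ is $\mathcal{Z}$-stable, so by the structure theory of $\mathcal{Z}$-absorbing C*-algebras (using that every quasitrace on $B$ is a trace) $B$ has strict comparison of positive elements and stable rank one, and the faithfulness of the unique trace makes $B$ stably finite. From $B \cong B \otimes \mathcal{Q}$ and the K\"unneth theorem, $K_0(B) \cong K_0(B) \otimes \mathbb{Q}$ is a $\mathbb{Q}$-vector space; in particular $\hat{\tau}_B(K_0(B)) \supseteq \mathbb{Q}$ is dense in $\mathbb{R}$, and this density together with strict comparison yields real rank zero for $B$. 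For part (1), I would identify $B^\omega$ with the von Neumann (tracial) ultrapower of $M := \pi_{\tau_B}(B)''$: the $2$-norm on $B$ is the restriction of the one on $M$, and Kaplansky density lets one replace a norm-bounded sequence in $M$ by a $2$-norm-close norm-bounded sequence in $B$, giving $B^\omega \cong M^\omega$. As $B$ has a unique trace, so does $M$, whence $M$ is a factor; being infinite-dimensional and finite it is a $\mathrm{II}_1$-factor, and therefore so is $B^\omega \cong M^\omega$.

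For part (2), real rank zero and stable rank one pass from $B$ to the norm ultrapower $B_\omega$ by a uniform-approximation argument (approximating each coordinate by an invertible, resp.\ invertible self-adjoint, element with uniformly controlled inverse). Separable $\mathcal{Q}$-stability of $B_\omega$ is immediate from Proposition~\ref{prop:DStablePermanence}, and working through separable $\mathcal{Q}$-stable subalgebras shows $K_0(B_\omega)$ is a $\mathbb{Q}$-vector space, hence divisible. The vanishing of $K_1(B_\omega)$ follows from $K_1(B) = 0$ once one knows null-homotopic unitaries in $B$ have uniformly bounded exponential length (a regularity consequence of $\mathcal{Q}$-stability), so that unitaries in $B_\omega$ lift to sequences of null-homotopic unitaries in $B$. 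The essential technical point of part (2), and the main obstacle of the whole proposition, is strict comparison for $B_\omega$: unlike the case of projections, the Cuntz class of a positive element of the ultrapower is \emph{not} the sequence of coordinatewise Cuntz classes, since the cut-offs $(a_n - \varepsilon)_+$ interact with the limit along $\omega$, so naive coordinatewise comparison fails. I would establish strict comparison following the template of \cite{Schafhauser:Crelle} for $B = \mathcal{Q}$, exploiting $\mathcal{Q}$-stability to absorb the cut-off parameters uniformly along $\omega$. Granting strict comparison, uniqueness of the trace on $B_\omega$ is cheap: for positive $j \in J_B$ one has $\tau_{B_\omega}(j) = 0$ (Cauchy--Schwarz) and in fact $d_{\tau_{B_\omega}}(j) = 0$, so $m\langle j \rangle \le \langle 1 \rangle$ for every $m$ and hence every trace must vanish on $j$; thus any trace on $B_\omega$ factors through the $\mathrm{II}_1$-factor $B^\omega$, which has a unique trace.

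For part (3), real rank zero and stable rank one for $J_B$ are inherited from $B_\omega$ since $J_B$ is an ideal. For the $K$-theory I would feed the trace-kernel extension into the six-term sequence. Using $K_0(B^\omega) \cong \mathbb{R}$ and $K_1(B^\omega) = 0$ (standard for a $\mathrm{II}_1$-factor, whose unitary group is norm-connected) together with $K_1(B_\omega) = 0$, the sequence collapses to
\[ 0 \longrightarrow K_0(J_B) \longrightarrow K_0(B_\omega) \xrightarrow{\;\hat{\tau}_{B_\omega}\;} \mathbb{R} \longrightarrow K_1(J_B) \longrightarrow 0, \]
where the middle map is $\hat{\tau}_{B_\omega}$ because it sends a projection to the trace of its image in $B^\omega$. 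This map is surjective, since for every $t \in [0,1]$ a sequence of projections in $B$ with traces converging to $t$ defines a projection in $B_\omega$ of trace exactly $t$ (here density of $\hat{\tau}_B(K_0(B))$ and real rank zero of $B$ are used), and a subgroup of $\mathbb{R}$ containing $[0,1]$ is all of $\mathbb{R}$. Surjectivity forces $K_1(J_B) = 0$; moreover $\mathbb{R}$ is an injective $\mathbb{Z}$-module, so the sequence splits and exhibits $K_0(J_B)$ as a direct summand of the divisible group $K_0(B_\omega)$, whence $K_0(J_B)$ is divisible.

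The two remaining conditions are comparison statements for projections, and here coordinatewise lifting \emph{does} work, precisely because projections carry no $\varepsilon$-cut-off subtlety. Given a projection $P \in M_n(J_B)$, all diagonal entries lie in $J_B$, so the unnormalized trace of $P$ vanishes; lifting $P$ via real rank zero to projections $P_k \in M_n(B)$, their total traces converge to $0$ along $\omega$, so for $\omega$-large $k$ this total trace is $< 1$ and strict comparison in $B$ produces a subprojection $p_k \le 1_B$ with $p_k \sim P_k$. The implementing partial isometries $v_k$ satisfy $\|v_k\|_2^2 = (\tau_B \otimes \mathrm{Tr})(P_k)/n \to 0$ along $\omega$, so they lie in the trace kernel; hence $P$ is Murray--von Neumann equivalent in $M_n(J_B)$ to $p := [p_k] \in J_B$, proving every projection in $J_B \otimes \mathcal{K}$ is equivalent to one in $J_B$. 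The same mechanism---lift a relation $(m+1)[p] \le m[q]$ to $B$, apply almost unperforation of $V(B)$ (equivalently, strict comparison of $B$), and observe that the resulting partial isometries are $2$-norm null---shows $V(J_B)$ is almost unperforated. This completes the verification that $J_B$ is an admissible kernel.
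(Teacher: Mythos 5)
The one genuine flaw in your argument is the derivation of divisibility of $K_0(J_B)$. You claim that the exact sequence $0 \rightarrow K_0(J_B) \rightarrow K_0(B_\omega) \rightarrow \mathbb{R} \rightarrow 0$ splits ``because $\mathbb{R}$ is an injective $\mathbb{Z}$-module.'' That is not how splitting works: an extension of abelian groups splits when the \emph{subgroup} is injective (divisible) or the \emph{quotient} is projective (free); injectivity of the quotient gives nothing, and indeed $\mathrm{Ext}^1_{\mathbb{Z}}(\mathbb{Q}, \mathbb{Z}) \neq 0$ even though $\mathbb{Q}$ is injective. Your conclusion survives by a direct repair: $K_0(J_B)$ is identified with the kernel of a homomorphism $f$ from the divisible group $K_0(B_\omega)$ into the torsion-free group $\mathbb{R}$, and such kernels are divisible (given $x \in \ker f$ and $n \geq 1$, choose $y$ with $ny = x$; then $nf(y) = 0$ forces $f(y) = 0$, so $y \in \ker f$). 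The paper avoids $K$-theory here entirely: $J_B$ is separably $\mathcal{Q}$-stable by ideal permanence (Proposition \ref{prop:DStablePermanence}), hence is the closed union of an increasing net of separable $\mathcal{Q}$-stable subalgebras, and continuity of $K_0(-)$ and $V(-)$ yields divisibility of $K_0(J_B)$ and almost unperforation of $V(J_B)$ in one stroke, whereas you must handle almost unperforation by a separate coordinatewise lifting argument. A second, smaller gap: the strict comparison of $B_\omega$, which you call the main obstacle and defer to a ``template,'' is in the paper just a citation, namely Lemma 1.23 of \cite{BBSTWW}, which shows strict comparison passes to ultraproducts with respect to limit traces; since $B$ has unique trace, all limit traces on $B_\omega$ coincide with $\tau_{B_\omega}$, which is exactly the statement needed. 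As written, your proposal leaves this step unestablished.

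Otherwise your route is correct and diverges from the paper's in instructive ways. You deduce uniqueness of the trace on $B_\omega$ from strict comparison (every trace is forced to vanish on $J_B$, hence factors through the $\mathrm{II}_1$-factor $B^\omega$, whose tracial state is unique by the Dixmier property); the paper instead cites Theorem 8 of \cite{Ozawa:DixmierApproximation}, so your argument trades a citation for a short self-contained proof, at the cost of making trace uniqueness depend on the strict comparison step. For the projection conditions on $J_B$ you lift coordinatewise to $B$ and use strict comparison there; the paper works directly inside $B_\omega$: for a projection $p \in \mathbb{M}_d(J_B)$ one has $\tau_{B_\omega}(p) = 0 < 1$, strict comparison of $B_\omega$ gives $v$ with $v^*v = p$ and $vv^* \leq 1_{B_\omega} \oplus 0_{B_\omega}^{\oplus(d-1)}$, and $vv^* \in \mathbb{M}_d(J_B)$ automatically because $J_B$ is an ideal --- shorter, and with no lifting of relations. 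Your $K_1(B_\omega) = 0$ argument is the paper's in substance; the paper obtains the uniform exponential-length bound ($\leq \pi$) from real rank zero via Lin's finite-spectrum approximation theorem \cite{Lin:BrownPedersen} rather than from $\mathcal{Q}$-stability, and your surjectivity argument for $K_0(B_\omega) \rightarrow \mathbb{R}$ can be simplified as in the paper by pushing forward projections of arbitrary rational trace under a unital embedding $\mathcal{Q} \hookrightarrow B$.
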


\begin{proof}
For (1), as $B$ has a unique trace $\tau_B$, $\pi_{\tau_B}(B)''$ is a $\mathrm{II}_1$-factor (see Theorem 6.7.4 and Corollary 6.8.5 in \cite{Dixmier:C*algebras}), and hence so is the tracial ultrapower $(\pi_{\tau_B}(B)'')^\omega$.  By Remark 4.7 in \cite{KirchbergRordam:TomsWinterConj}, $B^\omega \cong B_\omega / J_B \cong (\pi_{\tau_B}(B)'')^\omega$.

For (2), the C$^*$-algebra $B$ has real rank zero by Theorem 7.2 of \cite{Rordam:UHFStable2}, stable rank one by Corollary 6.6 in \cite{Rordam:UHFStable}, and strict comparison by Theorem 5.2 of \cite{Rordam:UHFStable2}.  All three properties are known to be preserved by ultraproducts; for strict comparison, this follows from Lemma 1.23 in \cite{BBSTWW}, and for real rank zero and stable rank one, see the proof of Proposition 3.2 in \cite{Schafhauser:Crelle} for example.  As $B$ is $\mathcal{Q}$-stable, $B_\omega$ is separably $\mathcal{Q}$-stable by Proposition \ref{prop:DStablePermanence}.  By Theorem 8 in \cite{Ozawa:DixmierApproximation}, the unique trace on $B_\omega$ is the trace $\tau_{B_\omega}$ induced by $\tau_B$.

As $B_\omega$ has stable rank one, to show $K_1(B_\omega) = 0$, it suffices by Theorem 2.10 of \cite{Rieffel:NCToriUnitaries} to show that the unitary group of $B_\omega$ is path connected.  Let $u$ be a unitary in $B_\omega$ and fix a sequence of unitaries $(u_n)_{n=1}^\infty$ in $B$ lifting $u$.  As $K_1(B) = 0$ and $B$ has stable rank one, another application of Theorem 2.10 in \cite{Rieffel:NCToriUnitaries} shows $u_n$ is in the path component of the identity for each $n \geq 1$.  Since $B$ has real rank zero, Theorem 5 of \cite{Lin:BrownPedersen} implies there is a unitary $v_n \in B$ with finite spectrum such that $\| u_n - v_n \| < 1/n$.  Write $v_n = e^{i h_n}$ for a self-adjoint $h_n \in B$ with $\|h_n\| \leq \pi$ and let $h$ denote the self-adjoint element of $B_\omega$ determined by the sequence $(h_n)_{n=1}^\infty$.  Then $u = e^{ih}$, and hence $u$ is in the path component of the identity in the unitary group of $B_\omega$.

For (3), as $B_\omega$ has real rank zero and stable rank one, so does $J_B$ as both properties pass of ideals by Corollary 2.8 in \cite{BrownPedersen:RealRankZero} and Theorem 4.3 in \cite{Rieffel:StableRank}, respectively.  If $d \geq 1$ is an integer and $p \in \mathbb{M}_d(J_B)$ is a projection, then
\[ (\tau_{B_\omega} \otimes \mathrm{Tr}_{\mathbb{M}_d})(p) = 0 < 1 = (\tau_{B_\omega} \otimes \mathrm{Tr}_{\mathbb{M}_d})(1_{B_\omega} \oplus 0_{B_\omega}^{\oplus (d-1)}) \]
where $\mathrm{Tr}_{\mathbb{M}_d}$ is the tracial functional on $\mathbb{M}_d$ normalized at a rank one projection.  As $B_\omega$ has strict comparison, there is a partial isometry $v \in \mathbb{M}_d(B_\omega)$ such that $v^*v = p$ and $vv^* \leq 1_{B_\omega} \oplus 0_{B_\omega}^{\oplus (d-1)}$.  Since $v^*v \in \mathbb{M}_d(J_B)$, we have $vv^* \in \mathbb{M}_d(J_B)$, and hence $vv^* = q \oplus 0_{B_\omega}^{\oplus (d-1)}$ for some projection $q \in J_B$.  So every projection in $J_B \otimes \mathcal{K}$ is Murray-von Neumann equivalent to a projection in $J_B$.

Since $B_\omega$ is separably $\mathcal{Q}$-stable, so is $J_B$ by Proposition \ref{prop:DStablePermanence}.  Hence there is an increasing net $J_i$ of a separable, $\mathcal{Q}$-stable C$^*$-subalgebras of $J_B$ with dense union.  It is well known that, since $J_i$ is $\mathcal{Q}$-stable, $V(J_i)$ is unperforated and $K_0(J_i)$ is divisible; this can be shown using the continuity of $V(-)$ and $K_0(-)$ and writing $J_i \otimes \mathcal{Q}$ as the inductive limit of the $^*$-homomorphisms $\mathbb{M}_k(J_i) \rightarrow \mathbb{M}_\ell(J_i)$ where $k$ divides $\ell$ and the maps are given by the diagonal embeddings with multiplicity $\ell / k$.  Using the continuity of the functors $V(-)$ and $K_0(-)$ again, it follows that $V(J_B)$ is almost unperforated and $K_0(J_B)$ is divisible as both properties are preserved by inductive limits.

It remains to show $K_1(J_B) = 0$.  Consider the exact sequence
\[ \begin{tikzcd} K_0(B_\omega) \arrow{r}{K_0(q_B)} & K_0(B^\omega) \arrow{r}{\partial_0} & K_1(J_B) \arrow{r}{K_1(j_B)} & K_1(B_\omega) \end{tikzcd} \]
induced by the trace-kernel extension.  As $K_1(B_\omega) = 0$, it suffices to show $K_0(q_B)$ is surjective.  Suppose $t \in [0, 1]$ and write $t = \lim_{n \rightarrow \omega} t_n$ with $t_n \in \mathbb{Q} \cap [0, 1]$.  As $B$ is unital and $\mathcal{Q}$-stable, there is a unital embedding $\mathcal{Q} \hookrightarrow B$.  Hence there is a sequence of projections $(p_n)_{n=1}^\infty \subseteq B$ with $\tau_B(p_n) = t_n$.  If $p$ denotes the projection in $B_\omega$ defined by the sequence $p_n$, then $\tau_{B_\omega}(p) = t$.  Hence the group homomorphism $\hat{\tau}_{B_\omega} : K_0(B_\omega) \rightarrow \mathbb{R}$ is surjective, and since $B^\omega$ is a $\mathrm{II}_1$-factor, the group homomorphism $\hat{\tau}_{B^\omega} : K_0(B^\omega) \rightarrow \mathbb{R}$ is an isomorphism.  Now, $\hat{\tau}_{B_\omega} = \hat{\tau}_{B^\omega} K_0(q_B)$, so $K_0(q_B)$ is surjective, and hence $K_1(J_B) = 0$.
\end{proof}

The relevant properties of admissible kernels are collected in the following result.

\begin{proposition}\label{prop:AdmissibleKernel}\hfill
\begin{enumerate}
  \item The property of being an admissible kernel is separably inheritable.
  \item If $I$ is an admissible kernel, then $\mathbb{M}_n(I)$ is an admissible kernel for all $n \geq 1$.
  \item If $I$ is a separable admissible kernel, then $I$ is stable and has the corona factorization property.
  \item If $A$ is a separable $C^*$-algebra satisfying the UCT and $I$ is a separable admissible kernel, then the canonical homomorphism $KK_\mathrm{nuc}(A, I) \rightarrow \mathrm{Hom}_{\mathbb{Z}}(K_0(A), K_0(I))$ is an isomorphism.
\end{enumerate}
\end{proposition}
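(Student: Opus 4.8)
The plan is to treat the four assertions separately, handling the first three by semigroup- and $K$-theoretic bookkeeping together with the regularity already built into Definition~\ref{defn:AdmissibleKernel}, and reserving the genuine $KK$-theoretic input for the last.

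For (1), I would write ``admissible kernel'' as the meet of six properties of separable C*-algebras: real rank zero, stable rank one, divisibility of $K_0$, vanishing of $K_1$, almost unperforation of the Murray--von Neumann semigroup, and the requirement that every projection in $(-)\otimes\mathcal K$ be Murray--von Neumann equivalent to a projection in the algebra. Since the meet of countably many separably inheritable properties is again separably inheritable, it suffices to check each factor. Real rank zero and stable rank one are classical separably inheritable properties (Section II.8.5 of \cite{Blackadar:Encyclopedia}); the remaining four I would verify directly using the continuity of the functors $V(-)$, $K_0(-)$, and $K_1(-)$ under sequential inductive limits. Preservation under injective inductive limits is then immediate: for instance divisibility passes to a limit because any $x\in K_0(\varinjlim A_n)$ lifts to some $K_0(A_m)$, where it has an $n$-th root whose image is an $n$-th root of $x$, and the other three arguments are analogous. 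For the separable-exhaustion clause, given a separable $A_0\subseteq I$ I would build an increasing chain $A_0\subseteq A_1\subseteq\cdots$ of separable subalgebras by a back-and-forth argument, adjoining to $A_{k+1}$ countably many witnesses drawn from $I$: $n$-th divisors of the classes in $K_0(A_k)$, partial isometries trivializing the classes of $K_1(A_k)$ (which vanish in $K_1(I)=0$), domination partial isometries realizing almost unperforation in $V(I)$, and equivalences carrying the projections of $\mathbb M_d(A_k)$ into $I$. The closed union $\hat A$ then satisfies all four conditions, again by continuity.

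For (2), the point is that all six defining conditions are invariant under matrix stabilization: $\mathbb M_n(I)\otimes\mathcal K\cong I\otimes\mathcal K$, so the projection-absorption clause and the identifications $K_*(\mathbb M_n(I))\cong K_*(I)$ and $V(\mathbb M_n(I))\cong V(I)$ transport at once, while real rank zero and stable rank one pass to matrix algebras over a real-rank-zero, stable-rank-one C*-algebra. For (3), stability is the one place requiring an actual construction: using real rank zero to obtain an approximate unit of projections in the separable (hence $\sigma$-unital) algebra $I$, cancellation from stable rank one, comparison from almost unperforation, and divisibility together with the projection-absorption clause, I would manufacture an approximate unit $e_1\le e_2\le\cdots$ of projections with all increments $e_{k+1}-e_k$ mutually Murray--von Neumann equivalent to $e_1$; a compatible system of matrix units then exhibits $I\cong e_1Ie_1\otimes\mathcal K$. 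The corona factorization property I would then deduce from the strict comparison encoded in almost unperforation of $V(I)$, upgraded to the Cuntz semigroup using real rank zero and stable rank one, via the work of Ortega--Perera--R\o rdam and Kucerovsky--Ng \cite{OrtegoPereraRordamI, KucerovskyNg}; this is essentially the argument of \cite{Schafhauser:Crelle}.

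For (4), I would run the Universal Coefficient Theorem. Since $K_0(I)$ is divisible it is an injective $\mathbb Z$-module, so $\mathrm{Ext}^1_\mathbb Z(-,K_0(I))=0$; since $K_1(I)=0$, the terms $\mathrm{Ext}^1_\mathbb Z(-,K_1(I))$ and $\mathrm{Hom}_\mathbb Z(-,K_1(I))$ vanish as well. Thus in the UCT sequence for $KK(A,I)$ the Ext term disappears and only $\mathrm{Hom}_\mathbb Z(K_0(A),K_0(I))$ survives. The sole subtlety, and the one place the ordinary $KK$-functor enters, is identifying $KK_\mathrm{nuc}(A,I)$ with $KK(A,I)$: here I would invoke that a separable, exact C*-algebra satisfying the UCT is $KK_\mathrm{nuc}$-equivalent to a nuclear C*-algebra, for which the two bifunctors agree, so that the comparison map $KK_\mathrm{nuc}(A,I)\to KK(A,I)$ is an isomorphism. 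I expect the main obstacle to lie in (3): the explicit production of the scaling approximate unit of projections, where divisibility, cancellation, comparison, and projection-absorption must be orchestrated simultaneously, with the corona factorization property then extracted from the comparison theory.
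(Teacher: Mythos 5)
Your treatment of (1), (2), and (4) is essentially the same mathematics as the paper's, which handles these parts largely by citation: (1) is Proposition 4.1 of \cite{Schafhauser:Crelle}, and your witness-adjoining exhaustion argument is the expected proof of that result; (2) comes down, as you say, to the matrix permanence of real rank zero and stable rank one; and your Hom/Ext computation in (4) is exactly the paper's. One inaccuracy in (4): you invoke exactness of $A$, but part (4) assumes only that $A$ is separable and satisfies the UCT, and no exactness is needed or available. By Theorem 4.1 and Proposition 7.1 of \cite{RosenbergSchochet}, the UCT alone makes $A$ $KK$-equivalent to a commutative, hence nuclear, C*-algebra, and Propositions 3.2 and 3.3 of \cite{Skandalis:KKnuc} then identify $KK_\mathrm{nuc}(A, I)$ with $KK(A, I)$.

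The genuine gap is in (3), where you correctly predicted the main obstacle but proposed a construction that cannot be carried out. An approximate unit of projections $e_1 \le e_2 \le \cdots$ with every increment $e_{k+1} - e_k$ Murray--von Neumann equivalent to $e_1$ forces $[e_k] = k[e_1]$ in $V(I)$; since an approximate unit of projections eventually dominates every projection of $I$ up to subequivalence, $[e_1]$ would have to be an order unit for $V(I)$, and nothing in Definition \ref{defn:AdmissibleKernel} provides one. Concretely, the $c_0$-sum $I = \bigoplus_{n=1}^\infty \mathcal{K} \otimes \mathcal{Q}$ is a separable admissible kernel (each clause is checked componentwise, using that a projection in a $c_0$-sum has only finitely many non-zero components), yet any sequence $(e_k)$ with $[e_k] = k[e_1]$ is confined to the finitely many summands supporting $e_1$, so it can never be an approximate unit --- even though this $I$ is visibly stable. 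Thus your intermediate object fails to exist while the conclusion remains true; the argument has to be local rather than global. The cited proof (the first two paragraphs of Theorem 2.1 of \cite{Schafhauser:Crelle}, which is all the paper offers for (3)) runs instead through the Hjelmborg--R{\o}rdam stability criterion: a $\sigma$-unital C*-algebra of real rank zero is stable if and only if for every projection $p$ there is a projection $q$ with $q \perp p$ and $q \sim p$. This does follow from the axioms: arrange an increasing approximate unit of projections $(p_n)$ with $p_1 = p$; by the projection-transfer clause, $p \oplus p \sim r$ for some projection $r \in I$; since $(p_n)$ is an approximate unit, $r \precsim p_m$ for some large $m$; then $2[p] \le [p_m] = [p] + [p_m - p]$ in $V(I)$, and cancellation (from stable rank one) gives $p \precsim p_m - p$, which supplies $q$. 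Note that neither divisibility nor almost unperforation is needed for stability; those are what drive the corona factorization property, where your appeal to \cite{OrtegoPereraRordamI} and \cite{KucerovskyNg} is the right direction.
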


\begin{proof}
(1) is Proposition 4.1 in \cite{Schafhauser:Crelle}.  In (2), the only non-trivial claim is that real rank zero and stable rank one are preserved by taking matrix algebras; this follows from Theorem 2.10 in \cite{BrownPedersen:RealRankZero} and Theorem 3.3 in \cite{Rieffel:StableRank}, respectively.  The first two paragraphs of the proof of Theorem 2.1 in \cite{Schafhauser:Crelle} show (3).  Since $K_1(I) = 0$ and $K_0(I)$ is divisible, it is an immediate consequence of the UCT that the natural map $KK(A, I) \rightarrow \mathrm{Hom}_{\mathbb{Z}}(K_0(A), K_0(I))$ is an isomorphism.  By Theorem 4.1 and Proposition 7.1 in \cite{RosenbergSchochet}, $A$ is $KK$-equivalent to a commutative C$^*$-algebra, and hence, by Propositions 3.2 and 3.3 in \cite{Skandalis:KKnuc}, the canonical map $KK_\mathrm{nuc}(A, I) \rightarrow KK(A, I)$ is an isomorphism.
\end{proof}

\section{Classification of $^*$-homomorphisms into ultrapowers}\label{sec:UltraproductClassification}

The goal of this section is to produce classification results for unital, full, nuclear $^*$-homomorphisms from a separable, exact C$^*$-algebra $A$ satisfying the UCT into an ultra\-power of a suitably well-behaved codomain $B$ as outlined in the introduction.  The following pullback lemma, shown to me by Jamie Gabe, will be used to control the range of a representation $A \rightarrow M(J_B)$ in the proof of Proposition \ref{prop:UltraproductExistence}.  This lemma is implicitly contained in the proof Theorem 5.1 in \cite{Schafhauser:Crelle} and was explicitly stated in a slightly different form in an earlier version available on the arXiv.  As the result does not appear in the published version of \cite{Schafhauser:Crelle}, the statement and proof are reproduced here.

\begin{lemma}\label{lemma:pullback}
Consider a commuting diagram
\[ \begin{tikzcd} 0 \arrow{r} & I \arrow{r}{j_1} \arrow[equals]{d} & P \arrow{r}{\alpha_1} \arrow{d}{\alpha_2} & B_1 \arrow{r} \arrow{d}{\beta_1} & 0 \\
   0 \arrow{r} & I \arrow{r}{j_2} & B_2 \arrow{r}{\beta_2} & D \arrow{r} & 0 \end{tikzcd} \]
of $C^*$-algebras with exact rows.  If $A$ is a $C^*$-algebra and $\varphi_i : A \rightarrow B_i$ are $^*$-homomorphisms for $i = 1, 2$ such that $\beta_1 \varphi_1 = \beta_2 \varphi_2$, then there is a unique $^*$-homomorphism $\varphi : A \rightarrow P$ such that $\alpha_i \varphi = \varphi_i$ for $i = 1, 2$.  Moreover, $\varphi$ is nuclear if, and only if, $\varphi_1$ and $\varphi_2$ are nuclear.
\end{lemma}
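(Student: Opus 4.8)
The plan is to recognize the right-hand square as a pullback and then to treat the existence/uniqueness statement and the nuclearity statement separately. First I would check that $(\alpha_1,\alpha_2)\colon P\to B_1\oplus B_2$ identifies $P$ with the fibre product $B_1\times_D B_2=\{(b_1,b_2):\beta_1(b_1)=\beta_2(b_2)\}$. That the image lands in $B_1\times_D B_2$ is precisely commutativity of the right square, $\beta_1\alpha_1=\beta_2\alpha_2$. Injectivity comes from the top row: if $\alpha_1(p)=0$ then $p=j_1(x)$ for some $x\in I$, whence $\alpha_2(p)=\alpha_2 j_1(x)=j_2(x)$, which forces $x=0$ since $j_2$ is injective. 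Surjectivity is the usual diagram chase: given $(b_1,b_2)$ with $\beta_1(b_1)=\beta_2(b_2)$, lift $b_1$ to some $p_0\in P$ using surjectivity of $\alpha_1$, note $b_2-\alpha_2(p_0)\in\ker\beta_2=j_2(I)$ by commutativity, write it as $j_2(x)$, and correct $p_0$ to $p_0+j_1(x)$.

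Granting this identification, the existence and uniqueness of $\varphi$ is exactly the universal property of the pullback: if $\varphi_i\colon A\to B_i$ satisfy $\beta_1\varphi_1=\beta_2\varphi_2$, then $a\mapsto(\varphi_1(a),\varphi_2(a))$ takes values in $B_1\times_D B_2\cong P$ and defines a $*$-homomorphism $\varphi$ with $\alpha_i\varphi=\varphi_i$, while uniqueness is immediate from the injectivity of $(\alpha_1,\alpha_2)$.

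For the last clause, one direction is routine: if $\varphi$ is nuclear then each $\varphi_i=\alpha_i\varphi$ is the composition of a nuclear map with a $*$-homomorphism, hence nuclear. The converse is the step I expect to be the main obstacle, precisely because nuclearity does \emph{not} descend along the inclusion $P\hookrightarrow B_1\oplus B_2$ in general (already an exact, non-nuclear C$^*$-algebra embeds nuclearly into $\mathcal{B}(H)$, so "nuclear into the ambient algebra" is genuinely weaker). To circumvent this I would pass to biduals and invoke the standard characterization that a $*$-homomorphism is nuclear exactly when it is weakly nuclear as a map into the enveloping von Neumann algebra of its codomain. Taking biduals of the two extensions (a $*$-exact operation, with the ideal bidual splitting off as a von Neumann direct summand) yields $P^{**}\cong I^{**}\oplus B_1^{**}$ and $B_2^{**}\cong I^{**}\oplus D^{**}$, with $\alpha_1^{**}$ the projection onto $B_1^{**}$ and $\alpha_2^{**}$ acting as the identity on the common summand $I^{**}$ and as $\beta_1^{**}$ on $B_1^{**}$ (the latter read off from $\alpha_2 j_1=j_2$ and $\beta_2\alpha_2=\beta_1\alpha_1$).

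The crux is then a short computation of the two components of $\varphi$ inside $P^{**}=I^{**}\oplus B_1^{**}$: its $B_1^{**}$-component is $\varphi_1$, and since $\alpha_2^{**}$ is the identity on $I^{**}$, its $I^{**}$-component agrees with the $I^{**}$-component of $\varphi_2=\alpha_2\varphi$. Nuclearity of $\varphi_1$ makes the first component weakly nuclear, and nuclearity of $\varphi_2$, composed with the normal central cut-down onto $I^{**}$, makes the second weakly nuclear; as weak nuclearity into a von Neumann direct sum is detected summand-by-summand, $\varphi$ is weakly nuclear into $P^{**}$ and hence nuclear. A bidual-free alternative would be to combine finite-dimensional approximations of $\varphi_1\oplus\varphi_2$ with a Choi--Effros lift along the surjection $\beta_2$, reconciling the two legs over $D$ by means of quasicentral approximate units for $I$; this is more self-contained but considerably more technical, so I would favour the bidual argument.
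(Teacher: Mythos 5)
Your identification of $P$ with the fibre product $B_1\times_D B_2$ via $(\alpha_1,\alpha_2)$, and the resulting existence/uniqueness argument, is exactly the paper's proof of the first clause (the paper calls the fibre product $Q$, reduces uniqueness to $\ker(\alpha_1)\cap\ker(\alpha_2)=0$, and leaves as ``a diagram chase'' the injectivity/surjectivity checks you spell out). The two proofs genuinely diverge on the converse nuclearity clause. The paper stays at the C$^*$-level: it fixes an arbitrary C$^*$-algebra $C$, tensors the whole diagram by $\otimes_{\mathrm{max}}C$ (exactness of the rows is preserved), uses the tensor characterization of nuclearity (Corollary 3.8.8 of Brown--Ozawa) to factor each $\varphi_i\otimes_{\mathrm{max}}\mathrm{id}_C$ as $\psi_i\rho$ through $A\otimes_{\mathrm{min}}C$, and then applies the first clause of the lemma itself to the tensored diagram: existence produces $\psi$ compatible with the $\psi_i$, and uniqueness forces $\psi\rho=\varphi\otimes_{\mathrm{max}}\mathrm{id}_C$, so $\varphi\otimes_{\mathrm{max}}\mathrm{id}_C$ factors through the minimal tensor product and $\varphi$ is nuclear. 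Your route instead passes to biduals; your structural claims there are all correct --- the extensions do split as von Neumann direct sums, $\alpha_2^{**}$ is the identity on $I^{**}$ and $\beta_1^{**}$ on $B_1^{**}$, the components of $\iota_P\varphi$ are $\iota_{B_1}\varphi_1$ and the central cut-down of $\iota_{B_2}\varphi_2$, and weak nuclearity into a direct sum is detected summandwise. The one load-bearing ingredient you take as a black box is the equivalence ``$\varphi\colon A\to B$ nuclear $\Leftrightarrow$ $\iota_B\varphi\colon A\to B^{**}$ weakly nuclear.'' This is true, so there is no gap, but its nontrivial direction is not free: one must push the matrix-level maps $\mathbb{M}_n\to B^{**}$ down into $B$ (Choi matrices plus Kaplansky density, with a functional-calculus correction to keep contractivity) and then run the Hahn--Banach convexity argument upgrading point-weak to point-norm approximation; it deserves a citation or a proof. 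In exchange, your argument gives a concrete picture of where $\varphi$ sits inside $P^{**}$ and avoids tensor-product bookkeeping, whereas the paper's argument is leaner on prerequisites and has the appealing feature of bootstrapping the lemma's own first clause.
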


\begin{proof}
Consider the pullback C$^*$-algebra $Q = \{ (b_1, b_2) \in B_1 \oplus B_2 : \beta_1(b_1) = \beta_2(b_2) \}$ and define $\pi : P \rightarrow Q$ by $\pi(p) = (\alpha_1(p), \alpha_2(p))$.  A diagram chase shows $\pi$ is an isomorphism.  For existence, the $^*$-homomorphism $\varphi$ is given by $\varphi(a) = \pi^{-1}(\varphi_1(a), \varphi_2(a))$, and uniqueness reduces to the statement $\ker(\alpha_1) \cap \ker(\alpha_2) = \ker(\pi) = 0$.

If $\varphi$ is nuclear, then $\varphi_i = \alpha_i \varphi$ is nuclear for $i = 1, 2$.  Conversely, suppose $\varphi_1$ and $\varphi_2$ are nuclear.  Fix a C$^*$-algebra $C$ and let $\rho : A \otimes_\mathrm{max} C \rightarrow A \otimes_\mathrm{min} C$ be the canonical $^*$-homomorphism.  As maximal tensor products preserve exact sequences, there is a commuting diagram
\[ \begin{tikzcd} 0 \arrow{r} & I \otimes_\mathrm{max} C \arrow{r}{j_1 \otimes \mathrm{id}} \arrow[equals]{d} & P \otimes_\mathrm{max} C \arrow{r}{\alpha_1 \otimes \mathrm{id}} \arrow{d}{\alpha_2 \otimes \mathrm{id}} & B_1 \otimes_\mathrm{max} C \arrow{r} \arrow{d}{\beta_1 \otimes \mathrm{id}} & 0 \\
   0 \arrow{r} & I \otimes_\mathrm{max} C \arrow{r}{j_2\otimes \mathrm{id}} & B_2 \otimes_\mathrm{max} C \arrow{r}{\beta_2 \otimes \mathrm{id}} & D \otimes_\mathrm{max} C \arrow{r} & 0 \end{tikzcd} \]
with exact rows.  As $\varphi_i$ is nuclear, there is a $^*$-homomorphism $\psi_i : A \otimes_\mathrm{min} C \rightarrow B_i \otimes_\mathrm{max} C$ such that $\varphi_i \otimes_{\mathrm{max}} \mathrm{id}_C = \psi_i \rho$ for each $i = 1, 2$ by Corollary 3.8.8 of \cite{BrownOzawa}.

Applying the first part of the lemma to the maps $\psi_1$ and $\psi_2$, there is a $^*$-homomorphism $\psi : A \otimes_\mathrm{min} C \rightarrow P \otimes_\mathrm{max} C$ such that $(\alpha_i \otimes_\mathrm{max} \mathrm{id}_C) \psi = \psi_i$.  Since
\[ (\alpha_i \otimes_\mathrm{max} \mathrm{id}_C) \psi \rho = \varphi_i \otimes_\mathrm{max} \mathrm{id}_C = (\alpha_i \otimes_\mathrm{max} \mathrm{id}_C) (\varphi \otimes_\mathrm{max} \mathrm{id}_C), \]
the uniqueness portion of the first part of the lemma implies $\psi \rho = \varphi \otimes_\mathrm{max} \mathrm{id}_C$.  So $\varphi \otimes_{\max} \mathrm{id}_C$ factors through $A \otimes_\mathrm{min} C$.  As $C$ was arbitrary, $\varphi$ is nuclear by Corollary 3.8.8 of \cite{BrownOzawa}.
\end{proof}

The next two propositions provide a first approximation to the existence and uniqueness results in Theorem D and its technical refinement given in Theorem \ref{thm:Classification}.

For any C$^*$-algebra $C$, $\iota_2 : C \rightarrow \mathbb{M}_2(C)$ denotes the inclusion into the $(1,1)$-corner, and for any $^*$-homomorphism $f : C_1 \rightarrow C_2$ between C$^*$-algebras $C_1$ and $C_2$, the induced $^*$-homomorphism $\mathbb{M}_2(C_1) \rightarrow \mathbb{M}_2(C_2)$ is still denoted by $f$.

\begin{proposition}\label{prop:UltraproductExistence}
Suppose $A$ is a separable, unital, exact $C^*$-algebra satisfying the UCT and $B$ is a simple, unital, $\mathcal{Q}$-stable $C^*$-algebra with a unique
trace $\tau_B$ such that every quasitrace on $B$ is a trace and $K_1(B) = 0$.

If $\tau_A$ is a faithful, amenable trace on $A$ and $\sigma : K_0(A) \rightarrow K_0(B_\omega)$ is a group homomorphism such that $\hat{\tau}_{B_\omega} \sigma = \hat{\tau}_A$ and $\sigma([1_A]) = [1_{B_\omega}]$, then there is a unital, full, nuclear $^*$-homomorphism $\varphi : A \rightarrow B_\omega$ such that $K_0(\varphi) = \sigma$ and $\tau_{B_\omega} \varphi = \tau_A$.
\end{proposition}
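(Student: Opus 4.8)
The plan is to first manufacture a unital, nuclear, trace-preserving *-homomorphism $\varphi_0 : A \to B_\omega$ with essentially no control on $K_0$, and then to correct its $K_0$-behaviour by a trace-zero perturbation supported in a separable admissible-kernel subalgebra of $J_B$, exploiting the identification $KK_{\mathrm{nuc}}(A, J_B) \cong \mathrm{Hom}_{\mathbb Z}(K_0(A), K_0(J_B))$ coming from Proposition \ref{prop:AdmissibleKernel}. The existence of $\varphi_0$ is where the quasidiagonality input enters: by Theorem \ref{thm:TWW} there is a unital, full, nuclear $\varphi_{\mathcal Q} : A \to \mathcal Q_\omega$ with $\tau_{\mathcal Q_\omega}\varphi_{\mathcal Q} = \tau_A$; since $B$ is $\mathcal Q$-stable with unique trace, the unital embedding $\mathcal Q \hookrightarrow B$ is trace-preserving and induces a unital, trace-preserving *-homomorphism $\iota_\omega : \mathcal Q_\omega \to B_\omega$, so that $\varphi_0 := \iota_\omega \varphi_{\mathcal Q}$ is unital, trace-preserving, and nuclear (a nuclear map followed by a *-homomorphism is nuclear). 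As $\tau_A$ is faithful and $B^\omega$ is a $\mathrm{II}_1$-factor by Proposition \ref{prop:TraceKernelExtension}, $q_B\varphi_0$ is faithful, hence full in the simple C*-algebra $B^\omega$.

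Next I would isolate the $K_0$-defect. Since $\varphi_0$ is unital and trace-preserving, $\hat{\tau}_{B_\omega} K_0(\varphi_0) = \hat{\tau}_A = \hat{\tau}_{B_\omega}\sigma$; composing with $q_B$ and using that $\hat{\tau}_{B^\omega} : K_0(B^\omega) \to \mathbb R$ is an isomorphism (as $B^\omega$ is a factor), both $K_0(q_B)K_0(\varphi_0)$ and $K_0(q_B)\sigma$ equal $\hat{\tau}_{B^\omega}^{-1}\hat{\tau}_A$, so $K_0(\varphi_0) - \sigma \in \ker K_0(q_B)$. Because $B^\omega$ is a von Neumann algebra, $K_1(B^\omega) = 0$, and the six-term sequence of the trace-kernel extension then shows $K_0(j_B) : K_0(J_B) \to K_0(B_\omega)$ is injective with image $\ker K_0(q_B)$. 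Hence there is a unique $\delta \in \mathrm{Hom}_{\mathbb Z}(K_0(A), K_0(J_B))$ with $K_0(j_B)\delta = \sigma - K_0(\varphi_0)$, and $\delta([1_A]) = 0$.

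To realize this correction I would use the separability machinery of Section \ref{sec:Preliminaries}. Applying Proposition \ref{prop:SeparableExtension} (with Propositions \ref{prop:TraceKernelExtension}, \ref{prop:AdmissibleKernel}, \ref{prop:DStablePermanence}, and \ref{prop:NonseparableHomGroups}), pass to a separable sub-extension $0 \to \hat J \to \hat E \to \hat D \to 0$ of the trace-kernel extension with $\varphi_0(A) \subseteq \hat E$, with $\hat J$ essential in $\hat E$, and with $\hat J$ a separable admissible kernel through whose $K_0$-group $\delta$ factors, say via $\hat\delta : K_0(A) \to K_0(\hat J)$. Then $\hat J$ is stable with the corona factorization property by Proposition \ref{prop:AdmissibleKernel}, $KK_{\mathrm{nuc}}(A, \hat J) \cong \mathrm{Hom}_{\mathbb Z}(K_0(A), K_0(\hat J))$, and I let $x \in KK_{\mathrm{nuc}}(A, \hat J)$ be the class of $\hat\delta$. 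Writing $\lambda : \hat E \to M(\hat J)$ for the canonical (injective, by essentiality) map, the representation $\lambda\varphi_0$ is weakly nuclear and unitizably full, hence nuclearly absorbing by Theorem \ref{thm:ElliottKucerovskyGabe}. Proposition \ref{prop:PrescribedCuntzPair} then produces a weakly nuclear $\Phi : A \to M(\hat J)$ with $(\Phi, \lambda\varphi_0)$ a Cuntz pair and $[\Phi, \lambda\varphi_0] = x$; running the construction unitally keeps $\Phi(1_A) = \lambda\varphi_0(1_A)$. Since $\Phi - \lambda\varphi_0$ is valued in $\hat J \subseteq \lambda(\hat E)$, the map $\Phi$ corestricts through $\lambda$ to a unital *-homomorphism $\varphi : A \to \hat E \subseteq B_\omega$ with $\varphi - \varphi_0 \in \hat J$. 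Proposition \ref{prop:KasparovProduct} and functoriality of $K_0$ give $K_0(\varphi) - K_0(\varphi_0) = K_0(j_B)\delta$, so $K_0(\varphi) = \sigma$, and since $\varphi - \varphi_0$ is valued in $J_B$, on which $\tau_{B_\omega}$ vanishes, $\tau_{B_\omega}\varphi = \tau_A$.

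I expect the main obstacle to be \emph{fullness of $\varphi$ in $B_\omega$}, as opposed to the easily obtained fullness of $q_B\varphi$ in $B^\omega$. An ideal of $B_\omega$ meeting $\varphi(A)$ nontrivially surjects onto the simple algebra $B^\omega$ but need not contain $J_B$, so fullness must be forced on the $\hat J$-component of $\varphi$; I anticipate achieving this by arranging the absorbing representation $\lambda\varphi_0$ to be genuinely full in $M(\hat J)$, so that the Cuntz-sum summand it contributes to $\Phi$ is full, which is in any case precisely what licenses the use of Theorem \ref{thm:ElliottKucerovskyGabe}. Verifying that this fullness survives corestriction back into $B_\omega$, together with the bookkeeping required to make $\lambda : \hat E \to M(\hat J)$ and the $KK$-computation well posed in the non-separable setting, is the delicate part of the argument.
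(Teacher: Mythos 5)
Your overall architecture matches the paper's: Theorem \ref{thm:TWW} plus $\mathcal{Q}$-stability gives the unital, full, nuclear, trace-preserving $\varphi_0$; the defect $\sigma - K_0(\varphi_0)$ lands in $\mathrm{im}\,K_0(j_B) \cong K_0(J_B)$ because $K_1(B^\omega)=0$; one separabilizes and realizes the defect as a $KK_{\mathrm{nuc}}$-class via a Cuntz pair against an absorbing representation. The genuine gap is at your absorption step: $\lambda\varphi_0$ is \emph{not} unitizably full, because $\varphi_0$ and $\lambda$ are unital. By the paper's remark following the definition (for unital domain and codomain, unitizably full means full \emph{and} $1 - \varphi(1_A)$ full), the unitization $(\lambda\varphi_0)^\dagger$ annihilates $1_{A^\dagger} - 1_A$, so it is not full, and Theorem \ref{thm:ElliottKucerovskyGabe} does not apply. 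Worse, no unital representation can be nuclearly absorbing in the sense of Definition \ref{defn:Absorbing}: absorbing even the zero representation (which is weakly nuclear) would force $u_n s_2 s_2^* u_n^* \in \hat J$, hence $1_{M(\hat J)} = s_2^*(s_2 s_2^*)s_2 \in \hat J$, impossible since $\hat J$ is nonunital. So Proposition \ref{prop:PrescribedCuntzPair} cannot be invoked for $\lambda\varphi_0$. This is exactly what the paper's passage to $2\times 2$ matrices repairs: $\iota_2\hat\psi : A \rightarrow \mathbb{M}_2(E)$ satisfies $1 - \iota_2\hat\psi(1_A) = 0 \oplus 1_E$, which is full, so $\lambda\iota_2\hat\psi$ is unitizably full and absorbing; the price --- that the corrected map $\varphi_2$ is then only known to satisfy $[\varphi_2(1_A)] = [1_{B_\omega}\oplus 0]$ in $K_0$ rather than being unital --- is paid back using stable rank one and cancellation of projections in $\mathbb{M}_2(B_\omega)$ to conjugate $\varphi_2(1_A)$ onto $1_{B_\omega}\oplus 0_{B_\omega}$ and extract a unital map into the corner. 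Your aside that ``running the construction unitally keeps $\Phi(1_A) = \lambda\varphi_0(1_A)$'' is this same gap in disguise: Proposition \ref{prop:PrescribedCuntzPair} produces $\Phi = \mathrm{ad}(u_0)(\theta\oplus\lambda\varphi_0)$, and the Cuntz-pair relation only gives $\Phi(1_A) - 1 \in \hat J$; without the matrix trick and the cancellation step you have no unital (or corner-supported) representative.

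Two further points. First, your corestriction of $\Phi$ through $\lambda$ rests on essentiality of $\hat J$ in $\hat E$, which you assert but never establish (it is true, using real rank zero and projections of small trace, but it must also be arranged to survive the separabilization); more seriously, even granting injectivity of $\lambda$, the corestricted $\varphi$ is not obviously nuclear, since nuclearity is not reflected by post-composition with an injective *-homomorphism (any exact, non-nuclear $A$ embeds nuclearly into $\mathcal{B}(H)$). The paper's pullback Lemma \ref{lemma:pullback} is designed to handle both issues simultaneously: it lifts $\theta$ into $\mathbb{M}_2(E)$ with no essentiality hypothesis and proves the lift is nuclear via a maximal tensor product argument, and your argument needs some substitute for it. Second, the obstacle you flag as the main one --- fullness of the final $\varphi$ in $B_\omega$ --- is a non-issue: since $\varphi$ agrees with $\varphi_0$ modulo $J_B$, it is trace-preserving, and then faithfulness of $\tau_A$, strict comparison of $B_\omega$ (Proposition \ref{prop:TraceKernelExtension}), and Lemma 2.2 of \cite{TWW:Annals} give fullness immediately; nothing needs to be forced on the $\hat J$-component of $\varphi$.
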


\begin{proof}
As $B$ is $\mathcal{Q}$-stable, there is a unital embedding $\mathcal{Q} \rightarrow B$ which is necessarily trace-preserving by the uniqueness of the trace on $\mathcal{Q}$.  This induces a unital, trace-preserving embedding $\mathcal{Q}_\omega \rightarrow B_\omega$.  Composing this embedding with the $^*$-homomorphism $A \rightarrow \mathcal{Q}_\omega$ given by Theorem \ref{thm:TWW} yields a unital, full, nuclear $^*$-homomorphism $\psi : A \rightarrow B_\omega$ such that $\tau_{B_\omega} \psi = \tau_A$.

Note that $\hat{\tau}_{B^\omega} K_0(q_B \psi) = \hat{\tau}_{B^\omega} K_0(q_B) \sigma$.  As $B^\omega$ is a $\mathrm{II}_1$-factor by Proposition \ref{prop:AdmissibleKernel}, $\hat{\tau}_{B^\omega}$ is an isomorphism, and hence $K_0(q_B \psi) = K_0(q_B) \sigma$.  So the image of $\sigma - K_0(\psi)$ is contained in $\ker(K_0(q_B)) = \mathrm{im}(K_0(j_B))$.  Using again that $B^\omega$ is a $\mathrm{II}_1$-factor, $K_1(B^\omega) = 0$, and hence $K_0(j_B)$ is injective.  So there is a group homomorphism $\kappa : K_0(A) \rightarrow K_0(J_B)$ such that $K_0(j_B) \kappa = \sigma - K_0(\psi)$.

By Proposition \ref{prop:SeparableCorestriction}, there is a separable C$^*$-subalgebra $E_0$ of $B_\omega$ containing $\psi(A)$ such that the corestriction of $\psi$ to $E_0$ is full and nuclear.  By Proposition \ref{prop:NonseparableHomGroups}, there is a separable C$^*$-subalgebra $I_0$ of $J_B$ such that $\kappa$
factors as the composition of a group homomorphism $\kappa_0 : K_0(A) \rightarrow K_0(I_0)$ and the group homomorphism $K_0(I_0) \rightarrow K_0(J_B)$ induced by the inclusion $I_0 \hookrightarrow J_B$.  As $J_B$ is an admissible kernel by Proposition \ref{prop:TraceKernelExtension} and being an admissible kernel is separably inheritable by Proposition \ref{prop:AdmissibleKernel}, Proposition \ref{prop:SeparableExtension} implies there are a separable admissible kernel $I \subseteq J_B$ containing $I_0$, a separable C$^*$-subalgebra $E \subseteq B_\omega$ containing $E_0$, and a separable C$^*$-subalgebra $D \subseteq B^\omega$ such that there is a homomorphism
\[ \begin{tikzcd}
   0 \arrow{r} & I \arrow{r}{\hat{\jmath}} \arrow{d}{\iota_I} & E \arrow{r}{\hat{q}} \arrow{d}{\iota_E} & D \arrow{r} \arrow{d}{\iota_D} & 0 \\
   0 \arrow{r} & J_B \arrow{r}{j_B} & B_\omega \arrow{r}{q_B} & B^\omega \arrow{r} & 0
\end{tikzcd} \]
of extensions where the vertical maps are the inclusions.  Let $\hat{\psi} : A \rightarrow E$ denote the corestriction of $\psi$ to $E$ and let $\hat{\kappa} : K_0(A) \rightarrow K_0(I)$ be the composition of $\kappa_0$ with the group homomorphism $K_0(I_0) \rightarrow K_0(I)$ induced by the inclusion $I_0 \hookrightarrow I$.

As $\hat{\psi} : A \rightarrow E$ is full and nuclear, $\iota_2 \hat{\psi} : A \rightarrow \mathbb{M}_2(E)$ is unitizably full and nuclear.  Let $\lambda : \mathbb{M}_2(E) \rightarrow M(\mathbb{M}_2(I))$ be the canonical $^*$-homomorphism.  By Proposition \ref{prop:AdmissibleKernel}, $\mathbb{M}_2(I)$ is a separable admissible kernel whence is stable and has the corona factorization property.  Note that $\lambda \iota_2 \hat{\psi}$ is unitizably full since $\lambda$ is unital and $\iota_2 \hat{\psi}$ is unitizably full.  Now by Theorem \ref{thm:ElliottKucerovskyGabe}, $\lambda \iota_2 \hat{\psi}$ is nuclearly absorbing.

As $\mathbb{M}_2(I)$ is a separable admissible kernel and $A$ satisfies the UCT, the canonical group homomorphism
\[ KK_\mathrm{nuc}(A, \mathbb{M}_2(I)) \rightarrow \mathrm{Hom}_\mathbb{Z}(K_0(A), K_0(\mathbb{M}_2(I))) \]
is an isomorphism by Proposition \ref{prop:AdmissibleKernel}.  Let $x \in KK_\mathrm{nuc}(A, \mathbb{M}_2(I))$ be a lift of $K_0(\iota_2) \hat{\kappa}$.  By Proposition \ref{prop:PrescribedCuntzPair}, there is a weakly nuclear representation $\theta : A \rightarrow M(\mathbb{M}_2(I))$ such that $(\theta, \lambda \iota_2 \hat{\psi})$ is a Cuntz pair in $\mathcal{E}_\mathrm{nuc}(A, \mathbb{M}_2(I))$ and $[\theta, \lambda \iota_2 \hat{\psi}] = x$.

As $A$ is exact and $\theta$ is weakly nuclear, Proposition 3.2 of \cite{Gabe:LiftingTheorems} implies that $\theta$ is nuclear.  Lemma \ref{lemma:pullback} applied to the diagram
\[ \begin{tikzcd}
  0 \arrow{r} & \mathbb{M}_2(I) \arrow{r}{\hat{\jmath}} \arrow[equals]{d} & \mathbb{M}_2(E) \arrow{r}{\hat{q}} \arrow{d}{\lambda} & \arrow{r} \mathbb{M}_2(D) \arrow{r} \arrow{d} & 0 \\ 0 \arrow{r} & \mathbb{M}_2(I) \arrow{r} & M(\mathbb{M}_2(I)) \arrow{r} & M(\mathbb{M}_2(I)) / \mathbb{M}_2(I) \arrow{r} & 0
\end{tikzcd} \]
implies there is a nuclear $^*$-homomorphism $\hat{\varphi}_2 : A \rightarrow \mathbb{M}_2(E)$ such that $\lambda \hat{\varphi}_2 = \theta$ and $\hat{q} \hat{\varphi}_2 = \hat{q} \iota_2 \hat{\psi}$.

Now in $KK_\mathrm{nuc}(A, \mathbb{M}_2(E))$, we have
\[ \hat{\jmath}_*(x) = \hat{\jmath}_*[\lambda \hat{\varphi}_2, \lambda \iota_2 \hat{\psi}] = [\hat{\varphi}_2] - [\iota_2 \hat{\psi}] \]
by Proposition \ref{prop:KasparovProduct}.  In particular, as $x$ induces the group homomorphism $K_0(\iota_2)\hat{\kappa}$,
\[ K_0(\hat{\varphi}_2) - K_0(\iota_2 \hat{\psi}) = K_0(\iota_2 \hat{\jmath}) \hat{\kappa}. \]
Let $\varphi_2 = \iota_E \hat{\varphi}_2 : A \rightarrow \mathbb{M}_2(B_\omega)$.  Then
\[ K_0(\varphi_2) - K_0(\iota_2 \psi) = K_0(\iota_2 j_B) \kappa = K_0(\iota_2) \sigma - K_0(\iota_2 \psi) \]
by the choice of $\kappa$, and hence $K_0(\varphi_2) = K_0(\iota_2) \sigma$.  In particular,
\[ [\varphi_2(1_A)] = K_0(\iota_2)(\sigma([1_A])) = [1_{B_\omega}] \in K_0(B_\omega). \]
As $B_\omega$ has stable rank one by Proposition \ref{prop:TraceKernelExtension}, $B_\omega$ has cancellation of projections by Proposition 6.5.1 of \cite{Blackadar:KTheory}, and there is a unitary $u \in \mathbb{M}_2(B_\omega)$ with $u \varphi_2(1_A) u^* = 1_{B_\omega} \oplus 0_{B_\omega}$.  Now, there is a unital $^*$-homomorphism $\varphi : A \rightarrow B_\omega$ such that $\iota_2 \varphi = \mathrm{ad}(u) \varphi_2$.

We claim $\varphi$ is the desired $^*$-homomorphism.  Note that
\[ K_0(\iota_2 \varphi) = K_0(\varphi_2) = K_0(\iota_2) \sigma \]
by the unitary invariance of $K_0$.  By the stability of $K_0$, the map $K_0(\iota_2)$ is an isomorphism, and hence $K_0(\varphi) = \sigma$.  By construction, $q_B \varphi_2 = q_B \iota_2 \psi$, so if $\tau_{\mathbb{M}_2(B_\omega)}$ is the trace on $\mathbb{M}_2(B_\omega)$ induced by $\tau_{B_\omega}$, then
\[ \tau_A = \tau_{B_\omega} \psi = 2 \tau_{\mathbb{M}_2(B_\omega)} \varphi_2 = 2 \tau_{\mathbb{M}_2(B_\omega)} \iota_2 \varphi = \tau_{B_\omega} \varphi. \]
As $\varphi$ is a compression of $\varphi_2$ and $\varphi_2$ is nuclear, $\varphi$ is also nuclear.  For each $a \in A_+ \setminus \{0\}$, $\tau_{B_\omega}(\varphi(a)) = \tau_A(a) > 0$ since $\tau_A$ is faithful.  Since $B_\omega$ has strict comparison by Proposition \ref{prop:TraceKernelExtension}, it follows from Lemma 2.2 in \cite{TWW:Annals} that $\varphi$ is full.
\end{proof}

\begin{proposition}\label{prop:UltraproductUniqueness}
Suppose $A$ is a separable, unital, exact $C^*$-algebra satisfying the UCT and $B$ is a simple, unital, $\mathcal{Q}$-stable $C^*$-algebra with a unique trace $\tau_B$ such that every quasitrace on $B$ is a trace and $K_1(B) = 0$.

If $\varphi, \psi : A \rightarrow B_\omega$ are unital, full, nuclear $^*$-homomorphisms such that $K_0(\varphi) = K_0(\psi)$ and $\tau_{B_\omega} \varphi = \tau_{B_\omega} \psi$, then there is a unitary $u \in B_\omega$ such that $\varphi = \mathrm{ad}(u) \psi$.
\end{proposition}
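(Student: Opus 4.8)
The plan is to follow the route sketched in the introduction: transport the problem into the trace-kernel extension, reduce to a separable admissible kernel inside $J_B$, show the resulting $KK$-class vanishes, and then feed this into the absorption machinery to produce a conjugating unitary.

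First I would dispose of the $B^\omega$-direction. Since $J_B$ is the trace-kernel ideal, $\tau_{B^\omega}q_B = \tau_{B_\omega}$, so the hypothesis $\tau_{B_\omega}\varphi = \tau_{B_\omega}\psi$ gives $\tau_{B^\omega}(q_B\varphi)=\tau_{B^\omega}(q_B\psi)$. As $\varphi,\psi$ are nuclear, the compositions $q_B\varphi,q_B\psi:A\to B^\omega$ are weakly nuclear, and $B^\omega$ is a $\mathrm{II}_1$-factor by Proposition \ref{prop:TraceKernelExtension}(1); Proposition \ref{prop:FiniteFactorClassification} then gives a net of unitaries implementing approximate $\|\cdot\|_2$-equivalence, which upgrades to an honest conjugating unitary $v\in B^\omega$ by countable saturation of the tracial ultrapower. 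Using that $B^\omega$ is a von Neumann algebra, hence has path-connected unitary group, I may write $v=e^{ih}$ with $h=h^*$; lifting $h$ to a self-adjoint element of $B_\omega$ and exponentiating produces a unitary lift $\tilde v\in B_\omega$. Replacing $\psi$ by $\mathrm{ad}(\tilde v)\psi$ — which changes neither the trace nor $K_0(\psi)$ — I may therefore assume $q_B\varphi=q_B\psi$, that is, $\varphi(a)-\psi(a)\in J_B$ for all $a\in A$.

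Next I would descend to a separable model. Combining Propositions \ref{prop:SeparableCorestriction} and \ref{prop:SeparableExtension} with the separable inheritability of admissible kernels (Proposition \ref{prop:AdmissibleKernel}(1)), the separable $\mathcal{Q}$-stability of $B_\omega$ (Proposition \ref{prop:TraceKernelExtension}(2)), and the fact that $J_B$ is an admissible kernel, I can produce a separable, unital, $\mathcal{Q}$-stable $E\subseteq B_\omega$ and an ideal $I=E\cap J_B$ which is a separable admissible kernel, such that $\varphi,\psi$ corestrict to full, nuclear *-homomorphisms $\hat\varphi,\hat\psi:A\to E$ with $\hat\varphi(a)-\hat\psi(a)\in I$. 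Since $\hat\varphi,\hat\psi$ are unital they are not themselves unitizably full, so — exactly as in the proof of Proposition \ref{prop:UltraproductExistence} — I pass to $\mathbb{M}_2$: the amplifications $\iota_2\hat\varphi,\iota_2\hat\psi:A\to\mathbb{M}_2(E)$ are unitizably full, and $\mathbb{M}_2(I)$ is again an admissible kernel (Proposition \ref{prop:AdmissibleKernel}(2)), hence stable with the corona factorization property (Proposition \ref{prop:AdmissibleKernel}(3)). With $\lambda:\mathbb{M}_2(E)\to M(\mathbb{M}_2(I))$ the canonical map, Theorem \ref{thm:ElliottKucerovskyGabe} shows that $\lambda\iota_2\hat\varphi$ and $\lambda\iota_2\hat\psi$ are nuclearly absorbing. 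The crux is then to show $[\lambda\iota_2\hat\varphi,\lambda\iota_2\hat\psi]=0$ in $KK_\mathrm{nuc}(A,\mathbb{M}_2(I))$. By Proposition \ref{prop:AdmissibleKernel}(4) this group is $\mathrm{Hom}_\mathbb{Z}(K_0(A),K_0(\mathbb{M}_2(I)))$, and the class corresponds to the homomorphism $\eta_I:[p]\mapsto[\hat\varphi(p)]-[\hat\psi(p)]$, the difference of projections agreeing modulo $I$. To see this vanishes I would use the six-term sequence of the trace-kernel extension: since $K_1(B^\omega)=0$, the connecting map into $K_0(J_B)$ is zero, so $K_0(j_B):K_0(J_B)\to K_0(B_\omega)$ is injective. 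Pushing $\eta_I$ forward to $K_0(J_B)$ and then into $K_0(B_\omega)$ recovers $K_0(\varphi)-K_0(\psi)=0$, so injectivity of $K_0(j_B)$ forces $\eta_I$ to vanish in $\mathrm{Hom}_\mathbb{Z}(K_0(A),K_0(J_B))$; as $K_0(A)$ is countable, Proposition \ref{prop:NonseparableHomGroups} identifies this group with the inductive limit over separable subalgebras, so after enlarging $I$ (still a separable admissible kernel) the map $\eta_I$ itself vanishes and the $KK$-class is zero.

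Finally, Proposition \ref{prop:KKUniqueness}, applied to $\iota_2\hat\varphi,\iota_2\hat\psi$ over the ideal $\mathbb{M}_2(I)\subseteq\mathbb{M}_2(E)$, produces unitaries $w_n\in\mathbb{M}_2(E)$ with $\|\iota_2\hat\varphi(a)-w_n\,\iota_2\hat\psi(a)\,w_n^*\|\to 0$. Because $\iota_2\hat\varphi(1_A)=\iota_2\hat\psi(1_A)$ is the corner projection $1_E\oplus 0$, a routine perturbation makes the $w_n$ commute with it, yielding unitaries in $E\subseteq B_\omega$ that approximately conjugate $\psi$ to $\varphi$. The last step — and the one I expect to be most delicate — is upgrading this approximate unitary equivalence to an exact one: since $A$ is separable and $B_\omega$ is a norm ultrapower, hence countably saturated, a standard reindexing argument yields a single unitary $u'\in B_\omega$ with $\varphi=\mathrm{ad}(u')(\mathrm{ad}(\tilde v)\psi)$, so $u=u'\tilde v$ is the desired unitary. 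The genuine obstacle throughout is the bookkeeping of the separable reductions, arranging that fullness, nuclearity, $\mathcal{Q}$-stability, the admissible-kernel property, and the vanishing of $\eta_I$ hold simultaneously for one pair $(E,I)$; the conceptual content lies entirely in the injectivity of $K_0(j_B)$ and the Elliott--Kucerovsky absorption theorem.
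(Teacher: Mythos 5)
Your architecture is essentially the paper's: the reduction to $q_B\varphi=q_B\psi$ via Proposition \ref{prop:FiniteFactorClassification} and a unitary lift from the $\mathrm{II}_1$-factor $B^\omega$, the descent to a separable extension with an admissible kernel, the use of Theorem \ref{thm:ElliottKucerovskyGabe} and Proposition \ref{prop:KKUniqueness} to convert a vanishing $KK$-class into approximate unitary equivalence, and the final reindexing-plus-corner-cutting in $\mathbb{M}_2(B_\omega)$. The one place you genuinely diverge is the vanishing of the $KK$-class, and that is where there is a gap. You correctly show, using $K_1(B^\omega)=0$ and injectivity of $K_0(j_B)$, that the image of your homomorphism $\eta_I$ in $\mathrm{Hom}_\mathbb{Z}(K_0(A),K_0(J_B))$ is zero, and you then use Proposition \ref{prop:NonseparableHomGroups} to enlarge $I$ to a separable $I'$ in which the image of $\eta_I$ dies. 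But the class you must kill in order to run the absorption argument is the class of the Cuntz pair formed over the \emph{enlarged} ideal, $[\lambda'\iota_2\hat{\varphi}',\lambda'\iota_2\hat{\psi}']\in KK_\mathrm{nuc}(A,\mathbb{M}_2(I'))$, and nothing in the paper identifies the homomorphism corresponding to this class with the pushforward of $\eta_I$. Proposition \ref{prop:KasparovProduct} only yields $K_0(\hat{\jmath}')\kappa' = K_0(\iota_2\hat{\varphi}')-K_0(\iota_2\hat{\psi}')$, which pins down $\kappa'$ only when $K_0(\hat{\jmath}')$ is injective --- precisely the hypothesis your detour is trying to avoid. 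What you are implicitly using is naturality of Cuntz-pair classes under the enlargement $(E,I)\hookrightarrow(E',I')$ (note $I$ is not even an ideal of $E'$, and there is no map $M(\mathbb{M}_2(I))\to M(\mathbb{M}_2(I'))$ to compare representations with); equivalently, your assertion that the class ``corresponds to'' $[p]\mapsto[\hat{\varphi}(p)]-[\hat{\psi}(p)]$ is the identification of the canonical map $KK_\mathrm{nuc}\to\mathrm{Hom}$ with the difference construction. Both statements are true, but they require a Kasparov-module argument of the same kind as Proposition \ref{prop:KasparovProduct}, which the paper never proves and which its proof is engineered to avoid.

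The paper closes this by front-loading two conditions into the separable reduction. First, since $K_0(\varphi)=K_0(\psi)$ holds in $K_0(B_\omega)$, the injectivity half of Proposition \ref{prop:NonseparableHomGroups} (applied upstairs, in $B_\omega$ rather than in $J_B$) lets one choose $E_0$ so that the corestrictions already satisfy $K_0(\hat{\varphi})=K_0(\hat{\psi})$. Second, $B^\omega$ is a $\mathrm{II}_1$-factor, so $K_1(B^\omega)=0$, and having trivial $K_1$ is separably inheritable; Proposition \ref{prop:SeparableExtension} is therefore run so that the separable quotient $D\subseteq B^\omega$ satisfies $K_1(D)=0$. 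The six-term sequence then makes $K_0(\hat{\jmath})$ injective at the separable level, and Proposition \ref{prop:KasparovProduct} gives $K_0(\hat{\jmath})\kappa = K_0(\iota_2\hat{\varphi})-K_0(\iota_2\hat{\psi})=0$, hence $\kappa=0$ and the $KK$-class vanishes --- with no enlargement and no naturality question. If you add $K_1(D)=0$ and agreement of $K_0(\hat{\varphi}),K_0(\hat{\psi})$ to your separable reduction, your proof becomes the paper's; as written, the step ``after enlarging $I$ \dots the $KK$-class is zero'' does not follow from the results you cite.
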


\begin{proof}
Note that $\tau_{B^\omega} q_B \varphi = \tau_{B^\omega} q_B \psi$.  As $B^\omega$ is a $\mathrm{II}_1$-factor by Proposition \ref{prop:TraceKernelExtension} and $q_B \varphi$ and $q_B \psi$ are nuclear, $q_B \varphi$ and $q_B \psi$ are unitarily equivalent by Proposition \ref{prop:FiniteFactorClassification} and a reindexing argument.  Let $\bar{v}$ be a unitary in $B^\omega$ with $q_B \varphi = \mathrm{ad}(\bar{v}) q_B \psi$.  Again as $B^\omega$ is a $\mathrm{II}_1$-factor, the unitary group of $B^\omega$ is path connected.  Hence there is a unitary $v \in B_\omega$ such that $q_B(v) = \bar{v}$.  Replacing $\psi$ with $\mathrm{ad}(v) \psi$, we may assume $q_B \varphi = q_B \psi$.

By Propositions \ref{prop:SeparableCorestriction} and \ref{prop:NonseparableHomGroups}, there is a separable C$^*$-subalgebra $E_0$ of $B_\omega$ containing $\varphi(A)$ and $\psi(A)$ such that the corestrictions of $\varphi$ and $\psi$ to $E_0$ are full and nuclear and agree on $K_0$.  By Proposition \ref{prop:TraceKernelExtension}, $J_B$ is an admissible kernel, $B_\omega$ is separably $\mathcal{Q}$-stable, and $B^\omega$ is a $\mathrm{II}_1$-factor and, therefore, has trivial $K_1$-group.  As having trivial $K_1$-group and being an admissible kernel are separably inheritable properties, Proposition \ref{prop:SeparableExtension} implies there are separable C$^*$-subalgebras $I \subseteq J_B$, $E \subseteq B_\omega$, and $D \subseteq B^\omega$ such that $I$ is an admissible kernel, $E$ contains $E_0$ and is $\mathcal{Q}$-stable, $K_1(D) = 0$, and there is a homomorphism
\[ \begin{tikzcd}
  0 \arrow{r} & I \arrow{r}{\hat{\jmath}} \arrow{d}{\iota_I} & E \arrow{r}{\hat{q}} \arrow{d}{\iota_E} & D \arrow{r} \arrow{d}{\iota_D} & 0 \\
  0 \arrow{r} & J_B \arrow{r}{j_B} & B_\omega \arrow{r}{q_B} & B^\omega \arrow{r} & 0
\end{tikzcd} \]
of extensions where the vertical arrows are the inclusion maps.  Let $\hat{\varphi}, \hat{\psi} : A \rightarrow E$ be the corestrictions of $\varphi$ and $\psi$ to $E$, respectively.  Then $\hat{q} \hat{\varphi} = \hat{q} \hat{\psi}$, $\hat{\varphi}$ and $\hat{\psi}$ are unital, full, and nuclear, and $K_0(\hat{\varphi}) = K_0(\hat{\psi})$.

Let $\lambda : \mathbb{M}_2(E) \rightarrow M(\mathbb{M}_2(I))$ be the canonical $^*$-homomorphism.  Note that the image of $\lambda\iota_2\varphi - \lambda\iota_2\psi$ is contained in $\mathbb{M}_2(I)$, so $(\lambda\iota_2\varphi, \lambda\iota_2\psi)$ is a Cuntz pair in $\mathcal{E}_\mathrm{nuc}(A, \mathbb{M}_2(I))$.  If $\kappa : K_0(A) \rightarrow K_0(\mathbb{M}_2(I))$ is the group homomorphism induced by this Cuntz pair, then $K_0(\hat{\jmath}) \kappa = K_0(\iota_2\hat{\varphi}) - K_0(\iota_2\hat{\psi}) = 0$ by Proposition \ref{prop:KasparovProduct}. As $K_1(D) = 0$, $K_0(\hat{\jmath})$ is injective, and hence $\kappa = 0$.  Proposition \ref{prop:AdmissibleKernel} implies $\mathbb{M}_2(I)$ is an admissible kernel and the canonical group homomorphism
\[ KK_\mathrm{nuc}(A, \mathbb{M}_2(I)) \rightarrow \mathrm{Hom}_{\mathbb{Z}}(K_0(A), K_0(\mathbb{M}_2(I))) \]
is an isomorphism, so $[\lambda\iota_2\hat{\varphi}, \lambda\iota_2\hat{\psi}] = 0$.

As $\hat{\varphi}$ and $\hat{\psi}$ are full and nuclear and $\lambda$ is unital, we have $\lambda \iota_2 \hat{\varphi}$ and $\lambda \iota_2 \hat{\psi}$ are unitizably full and nuclear.  As $\mathbb{M}_2(I)$ is a separable admissible kernel, Proposition \ref{prop:AdmissibleKernel} implies $\mathbb{M}_2(I)$ is stable and satisfies the corona factorization property.  It follows that $\lambda\iota_2\hat{\varphi}$ and $\lambda\iota_2\hat{\psi}$ are nuclearly absorbing by Theorem \ref{thm:ElliottKucerovskyGabe}, and Proposition \ref{prop:KKUniqueness} implies $\iota_2 \hat{\varphi}$ and $\iota_2 \hat{\psi}$ are approximately unitarily equivalent.

Now, $\iota_2 \varphi$ and $\iota_2 \psi$ are approximately unitarily equivalent, and hence, by a reindexing argument, there is a unitary $w \in \mathbb{M}_2(B_\omega)$ with $\iota_2 \varphi = \mathrm{ad}(w) \iota_2 \psi$.  As $\varphi$ and $\psi$ are unital, $w$ commutes with $\iota_2 \varphi(1_A) = \iota_2 \psi(1_A) = 1_{B_\omega} \oplus 0_{B_\omega}$.  Therefore, $w = u \oplus u'$ for unitaries $u$ and $u'$ in $B_\omega$, and $\varphi = \mathrm{ad}(u) \psi$.
\end{proof}

\section{The classification theorem}\label{sec:Classification}

The goal of this section is to show the classification results for $^*$-homomorphisms from $A$ to $B_\omega$ given in Section \ref{sec:UltraproductClassification} imply the analogous results for $^*$-homomorphisms from $A$ to $B$ under the same hypothesis.  The ideas used here go back at least as far as the work of Lin in \cite{Lin:StableApproxEquivalence} and Dadarlat and Eilers in \cite{DadarlatEilers:Classification} and have been used heavily since then.  The uniqueness result for $^*$-homomorphisms from $A$ to $B$ follows immediately from the uniqueness result for $^*$-homomorphisms from $A$ to $B_\omega$.  The difficulty lies in the existence result where only approximately multiplicative maps from $A$ to $B$ can be produced directly from a $^*$-homomorphism from $A$ to $B_\omega$.  To make up for this, a technical uniqueness result for approximately multiplicative maps is needed.

Let $A$ and $B$ be C$^*$-algebras, let $\mathcal{G} \subseteq A$ be a finite set, and let $\delta > 0$ be given. A linear, self-adjoint function $\varphi : A \rightarrow B$ is called \emph{$(\mathcal{G}, \delta)$-multiplicative} if
\[ \| \varphi(aa') - \varphi(a) \varphi(a') \| < \delta \]
for all $a, a' \in \mathcal{G}$.  Following Section 3.3 of \cite{DadarlatEilers:Classification}, a \emph{$K_0$-triple} for a unital C$^*$-algebra $A$ is a triple $(\mathcal{G}, \delta, \mathcal{P})$ where $\mathcal{G} \subseteq A$ is a finite set, $\delta > 0$, and $\mathcal{P} \subseteq \mathcal{P}_\infty(A)$ is a finite set of projections in matrices over $A$ such that whenever $B$ is a unital C$^*$-algebra and $\varphi : A \rightarrow B$ is a linear, self-adjoint, $(\mathcal{G}, \delta)$-multiplicative map, $\| \varphi(p^2) - \varphi(p)^2 \| < 1/4$ for all $p \in \mathcal{P}$.  Note that if $\mathcal{P} \subseteq \mathcal{P}_\infty(A)$ is any finite set, then for all sufficiently large finite sets $\mathcal{G} \subseteq A$ and sufficiently small $\delta > 0$, the triple $(\mathcal{G}, \delta, \mathcal{P})$ is a $K_0$-triple for $A$.

Let $\chi$ denote the characteristic function of $[1/2, \infty)$ defined on the real numbers.  Note that if $(\mathcal{G}, \delta, \mathcal{P})$ is a $K_0$-triple for $A$ and $\varphi : A \rightarrow B$ is a linear, self-adjoint, $(\mathcal{G}, \delta)$-multiplicative map, then $1/2$ is not in the spectrum of $\varphi(p)$ for all $p \in \mathcal{P}$.  Hence for $p \in \mathcal{P}$, we may define $\varphi_\#(p) := [\chi(\varphi(p))] \in K_0(B)$.  In this way, every linear, self-adjoint, $(\mathcal{G}, \delta)$-multiplicative map $\varphi : A \rightarrow B$ defines a function $\varphi_\# : \mathcal{P} \rightarrow K_0(B)$.

\begin{lemma}\label{lemma:ApproximateExistence}
Suppose $A$ is a separable, unital, exact $C^*$-algebra satisfying the UCT and $B$ is a simple, unital, $\mathcal{Q}$-stable $C^*$-algebra with a unique trace $\tau_B$ such that every quasitrace on $B$ is a trace and $K_1(B) = 0$.

If $\tau_A$ is a faithful, amenable trace on $A$ and $\sigma : K_0(A) \rightarrow K_0(B)$ is a group homomorphism such that $\hat{\tau}_B \sigma = \hat{\tau}_A$ and $\sigma([1_A]) = [1_B]$, then for any $K_0$-triple $(\mathcal{G}, \delta, \mathcal{P})$ for $A$, there is a unital, completely positive, nuclear, $(\mathcal{G}, \delta)$-multiplicative map $\varphi : A \rightarrow B$ such that $\varphi_\#(p) = \sigma([p])$ for all $p \in \mathcal{P}$ and $| \tau_B(\varphi(a)) - \tau_A(a) | < \delta$ for all $a \in \mathcal{G}$.
\end{lemma}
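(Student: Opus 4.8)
The plan is to transport the ultrapower existence result, Proposition~\ref{prop:UltraproductExistence}, down to $B$ itself: I would lift a $*$-homomorphism $A \to B_\omega$ to an approximately multiplicative map $A \to B$ and extract the desired data along the ultrafilter $\omega$.

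First I would pass to the ultrapower. Let $\iota : B \to B_\omega$ be the diagonal inclusion as constant sequences; it is unital and trace-preserving, so $\hat{\tau}_{B_\omega} K_0(\iota) = \hat{\tau}_B$. Setting $\tilde{\sigma} := K_0(\iota) \sigma : K_0(A) \to K_0(B_\omega)$, the hypotheses on $\sigma$ give $\hat{\tau}_{B_\omega} \tilde{\sigma} = \hat{\tau}_B \sigma = \hat{\tau}_A$ and $\tilde{\sigma}([1_A]) = [1_{B_\omega}]$. Proposition~\ref{prop:UltraproductExistence} then provides a unital, full, nuclear $*$-homomorphism $\Phi : A \to B_\omega$ with $K_0(\Phi) = \tilde{\sigma}$ and $\tau_{B_\omega} \Phi = \tau_A$. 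Next I would factor $\Phi$ through a matrix algebra and lift. Since $A$ is separable and $\Phi$ is nuclear, for a tolerance $\delta' > 0$ to be fixed below there are $m \geq 1$ and unital completely positive maps $\alpha : A \to \mathbb{M}_m$ and $\beta : \mathbb{M}_m \to B_\omega$ with $\|\beta \alpha(a) - \Phi(a)\| < \delta'$ for all $a$ in a finite set large enough to contain $\mathcal{G}$, all products $a a'$ with $a, a' \in \mathcal{G}$, and all matrix entries of the projections in $\mathcal{P}$. As $\mathbb{M}_m$ is nuclear, the Choi--Effros lifting theorem yields unital completely positive maps $\beta_n : \mathbb{M}_m \to B$ lifting $\beta$ along $\ell^\infty(B) \to B_\omega$. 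I then set $\varphi_n := \beta_n \alpha : A \to B$, which are unital, completely positive, and nuclear, and the sequence $(\varphi_n(a))_n$ represents $\beta\alpha(a)$ in $B_\omega$ for every $a \in A$.

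Reading off norms and traces along $\omega$, for all $a, a' \in \mathcal{G}$ one has
\[ \lim_{n \to \omega} \|\varphi_n(a a') - \varphi_n(a) \varphi_n(a')\| = \|\beta\alpha(a a') - \beta\alpha(a)\beta\alpha(a')\|, \]
which is $< \delta$ once $\delta'$ is small relative to $\delta$ and $\max_{a \in \mathcal{G}} \|a\|$, since $\Phi$ is multiplicative and contractive; similarly $\lim_{n \to \omega} \tau_B(\varphi_n(a)) = \tau_{B_\omega}(\beta\alpha(a))$ and $|\tau_{B_\omega}(\beta\alpha(a)) - \tau_A(a)| < \delta'$ as $\tau_{B_\omega}\Phi = \tau_A$. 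Hence for $\omega$-many $n$ the map $\varphi_n$ is $(\mathcal{G}, \delta)$-multiplicative and satisfies $|\tau_B(\varphi_n(a)) - \tau_A(a)| < \delta$ for all $a \in \mathcal{G}$.

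For the $K_0$-data, fix $p \in \mathcal{P}$ and take $\delta' < 1/4$, so that $\chi(\beta\alpha(p))$ is a projection Murray--von Neumann equivalent to $\Phi(p)$ in $B_\omega$; thus $[\chi(\beta\alpha(p))] = K_0(\Phi)([p]) = K_0(\iota)(\sigma([p]))$. Writing $\sigma([p]) = [e_p] - [f_p]$ for projections $e_p, f_p$ over $B$, this means $\chi(\beta\alpha(p)) \oplus \iota(f_p)$ and $\iota(e_p)$ agree in $K_0(B_\omega)$; the former is represented, for $\omega$-many $n$, by the genuine projections $\chi(\varphi_n(p)) \oplus f_p$ and the latter by the constant sequence $e_p$. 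The main obstacle is to promote this ultrapower-level equality to an \emph{exact} equality in $K_0(B)$: I would lift a stabilized Murray--von Neumann equivalence witnessing the $K_0$-equality to $B$, correct the resulting approximate partial isometries to genuine ones, and use that $B$ has cancellation of projections (stable rank one, as established in the proof of Proposition~\ref{prop:TraceKernelExtension}) to conclude $\varphi_{n, \#}(p) = [\chi(\varphi_n(p))] = \sigma([p])$ for $\omega$-many $n$. Finally, since $\mathcal{G}$ and $\mathcal{P}$ are finite, the finitely many sets of good indices all lie in $\omega$, so their intersection is nonempty; choosing $n_0$ therein and setting $\varphi := \varphi_{n_0}$ completes the construction. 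The approximate multiplicativity and the trace condition descend for free along $\omega$; the only genuinely delicate point is this $K_0$-destabilization, where continuity of $K_0$ under the ultrapower together with cancellation in $B$ turns an approximate equivalence in $B_\omega$ into an exact one in $B$ for $\omega$-many indices.
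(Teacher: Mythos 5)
Your proposal is correct and follows essentially the same route as the paper's proof: apply Proposition~\ref{prop:UltraproductExistence} to obtain a unital, nuclear, trace-preserving *-homomorphism $A \rightarrow B_\omega$ with $K_0$-data $K_0(\iota_B)\sigma$, lift it to a sequence of unital, completely positive, nuclear maps $A \rightarrow B$, read off $(\mathcal{G}, \delta)$-multiplicativity and the trace condition along $\omega$, and capture the $K_0$-data by lifting a stabilized partial isometry from matrices over $B_\omega$ and perturbing it to a genuine one for $\omega$-many $n$, finally intersecting the finitely many $\omega$-large index sets. The only deviations are harmless: the paper lifts the nuclear *-homomorphism directly via the Choi--Effros lifting theorem rather than factoring approximately through a matrix algebra and lifting the finite-dimensional leg (so it avoids your $\delta'$-bookkeeping), and the cancellation of projections you invoke at the end is unnecessary, since once the stabilized Murray--von Neumann equivalence holds in matrices over $B$, group cancellation in $K_0(B)$ already gives $\varphi_\#(p) = \sigma([p])$.
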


\begin{proof}
Suppose $\sigma$ and $\tau_A$ are given as in the statement and let $(\mathcal{G}, \delta, \mathcal{P})$ be a $K_0$-triple for $A$.  Let $\iota_B : B \rightarrow B_\omega$ denote the diagonal embedding.  By Proposition \ref{prop:UltraproductExistence}, there is a unital, nuclear $^*$-homomorphism $\varphi_\omega : A \rightarrow B_\omega$ such that $K_0(\varphi_\omega) = K_0(\iota_B) \sigma$ and $\tau_{B_\omega} \varphi_\omega = \tau_A$.  By the Choi-Effros lifting theorem, there is a sequence of unital, completely positive, nuclear maps $\varphi_n : A \rightarrow B$ representing $\varphi_\omega$.  Let
\[ S_1 = \bigcap_{a, a' \in \mathcal{G}} \{ n \geq 1 : \| \varphi_n(aa') - \varphi_n(a) \varphi_n(a') \| < \delta \} \]
and note that $S_1 \in \omega$ and, for each $n \in S_1$, $\varphi_n$ is $(\mathcal{G}, \delta)$-multiplicative.

Fix $p \in \mathcal{P}$ and let $d, k \geq 1$ be integers such that $p \in \mathbb{M}_d(A)$ and there are projections $e, f \in \mathbb{M}_{k}(B)$ with $\sigma([p]) = [e] - [f]$.  Then $[\varphi_\omega(p)] = [\iota_B(e)] - [\iota_B(f)]$.  Now, there are an integer $\ell \geq 1$ and a partial isometry $v \in \mathbb{M}_{d + k + \ell}(B_\omega)$ such that
\[ v^* v = \varphi_\omega(p) \oplus \iota_B(f) \oplus 1_{B_\omega}^{\oplus \ell} \quad \text{and} \quad v v^* = 0_{B_\omega}^{\oplus d} \oplus \iota_B(e) \oplus 1_{B_\omega}^{\oplus \ell}. \]
Let $(v_n)_{n=1}^\infty$ be a bounded sequence in $\mathbb{M}_{d+k+\ell}(B)$ lifting $v$ and note that
\[ \lim_{n \rightarrow \omega} \| v_n^* v_n - \chi(\varphi_n(p)) \oplus f \oplus 1_B^{\oplus \ell} \| = \lim_{n \rightarrow \omega} \| v_n v_n^* - 0_B^{\oplus d} \oplus e \oplus 1_B^{\oplus \ell} \| = 0. \]
As $p \in \mathcal{P}$ was arbitrary and $\mathcal{P}$ is finite,
\[ S := \bigcap_{p \in \mathcal{P}} \{ n \in S_1 : (\varphi_n)_\#(p) = [\sigma(p)] \} \in \omega. \]

Let $T = \bigcap_{a \in \mathcal{G}} \{ n \geq 1 : |\tau_B (\varphi(a)) - \tau_A(a)| < \delta \}$ and note that $T \in \omega$ since $\tau_{B_\omega} \varphi = \tau_A$.  Now, $S \cap T \in \omega$, and in particular, $S \cap T \neq \emptyset$.  Fix $n \in S \cap T$ and define $\varphi = \varphi_n$.
\end{proof}

\begin{lemma}\label{lemma:ApproximateUniqueness}
Suppose $A$ is a separable, unital, exact $C^*$-algebra satisfying the UCT and $B$ is a simple, unital, $\mathcal{Q}$-stable $C^*$-algebra with a unique trace $\tau_B$ such that every quasitrace on $B$ is a trace and $K_1(B) = 0$.

For any faithful trace $\tau_A$ on $A$, finite set $\mathcal{F} \subseteq A$, and $\varepsilon > 0$, there is a $K_0$-triple $(\mathcal{G}, \delta, \mathcal{P})$ for $A$ such that if $\varphi, \psi : A \rightarrow B$ are unital, completely positive, nuclear, $(\mathcal{G}, \delta)$-multiplicative maps with $\varphi_\#(p) = \psi_\#(p)$, $|\tau_B(\varphi(a)) - \tau_A(a)| < \delta$, and $|\tau_B(\psi(a)) - \tau_A(a)| < \delta$ for all $a \in \mathcal{G}$ and $p \in \mathcal{P}$, then there is a unitary $u \in B$ such that
\[ \| \varphi(a) - u \psi(a) u^* \| < \varepsilon \]
for all $a \in \mathcal{F}$.
\end{lemma}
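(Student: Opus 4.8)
The plan is to argue by contradiction, assembling a hypothetical failing sequence of approximate morphisms into a genuine pair of $*$-homomorphisms $A \to B_\omega$ and then invoking the ultrapower uniqueness result, Proposition \ref{prop:UltraproductUniqueness}. So suppose the statement fails for some faithful trace $\tau_A$, finite set $\mathcal{F} \subseteq A$, and $\varepsilon > 0$. Fix an increasing sequence of finite sets $\mathcal{P}_n \subseteq \mathcal{P}_\infty(A)$ whose union contains projections generating $K_0(A)$, together with an increasing sequence of finite subsets of $A$ with dense union; using that every finite $\mathcal{P}$ belongs to a $K_0$-triple for all sufficiently large $\mathcal{G}$ and small $\delta$, choose $K_0$-triples $(\mathcal{G}_n, \delta_n, \mathcal{P}_n)$ with $\mathcal{G}_n$ increasing to a dense set and $\delta_n \to 0$. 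By the assumed failure, for each $n$ there are unital, completely positive, nuclear, $(\mathcal{G}_n, \delta_n)$-multiplicative maps $\varphi_n, \psi_n : A \to B$ with $(\varphi_n)_\# = (\psi_n)_\#$ on $\mathcal{P}_n$ and with $|\tau_B(\varphi_n(a)) - \tau_A(a)| < \delta_n$, $|\tau_B(\psi_n(a)) - \tau_A(a)| < \delta_n$ for all $a \in \mathcal{G}_n$, yet such that no unitary of $B$ conjugates $\psi_n$ to within $\varepsilon$ of $\varphi_n$ on $\mathcal{F}$.

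Since $\delta_n \to 0$ and $\bigcup_n \mathcal{G}_n$ is dense, the sequences $(\varphi_n)$ and $(\psi_n)$ induce unital, nuclear $*$-homomorphisms $\varphi_\omega, \psi_\omega : A \to B_\omega$, the multiplicativity holding exactly in the limit and nuclearity passing to the induced maps from that of each $\varphi_n, \psi_n$. The trace estimates yield $\tau_{B_\omega} \varphi_\omega = \tau_A = \tau_{B_\omega} \psi_\omega$. For fullness, note $\tau_{B_\omega}(\varphi_\omega(a)) = \tau_A(a) > 0$ for every $a \in A_+ \setminus \{0\}$ as $\tau_A$ is faithful; since $B_\omega$ has strict comparison by Proposition \ref{prop:TraceKernelExtension}, Lemma 2.2 of \cite{TWW:Annals} shows $\varphi_\omega$ is full, and similarly $\psi_\omega$. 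It remains to check $K_0(\varphi_\omega) = K_0(\psi_\omega)$. Fix a projection $p \in \mathbb{M}_d(A)$; for all large $n$ we have $p \in \mathcal{P}_n$, so $[\chi(\varphi_n(p))] = (\varphi_n)_\#(p) = (\psi_n)_\#(p) = [\chi(\psi_n(p))]$ in $K_0(B)$. As $B$ has stable rank one by Corollary 6.6 of \cite{Rordam:UHFStable}, hence cancellation of projections by Proposition 6.5.1 of \cite{Blackadar:KTheory}, the projections $\chi(\varphi_n(p))$ and $\chi(\psi_n(p))$ are Murray--von Neumann equivalent in $\mathbb{M}_d(B)$ via a partial isometry $w_n$. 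The sequence $(w_n)$ defines a partial isometry in $\mathbb{M}_d(B_\omega)$ witnessing $\varphi_\omega(p) \sim \psi_\omega(p)$, whence $K_0(\varphi_\omega)([p]) = K_0(\psi_\omega)([p])$. As such classes generate $K_0(A)$, we obtain $K_0(\varphi_\omega) = K_0(\psi_\omega)$.

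Thus $\varphi_\omega$ and $\psi_\omega$ satisfy the hypotheses of Proposition \ref{prop:UltraproductUniqueness}, so there is a unitary $u \in B_\omega$ with $\varphi_\omega = \mathrm{ad}(u) \psi_\omega$. Lifting $u$ to a contractive sequence and applying polar decomposition on the $\omega$-large set where the lift is invertible produces unitaries $u_n \in B$ representing $u$; then $\lim_{n \to \omega} \|\varphi_n(a) - u_n \psi_n(a) u_n^*\| = 0$ for every $a \in A$, so the set of $n$ for which $\|\varphi_n(a) - u_n \psi_n(a) u_n^*\| < \varepsilon$ for all $a \in \mathcal{F}$ lies in $\omega$ and is in particular non-empty. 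For any such $n$, the unitary $u_n$ contradicts the defining property of $\varphi_n, \psi_n$, completing the argument. I expect the main obstacle to be verifying that the assembled maps $\varphi_\omega, \psi_\omega$ meet \emph{every} hypothesis of Proposition \ref{prop:UltraproductUniqueness} exactly: in particular, converting the approximate $K_0$-data $(\varphi_n)_\# = (\psi_n)_\#$ into the genuine equality $K_0(\varphi_\omega) = K_0(\psi_\omega)$ in $K_0(B_\omega)$, where cancellation in $B$ is essential, and confirming that nuclearity is inherited by the induced maps.
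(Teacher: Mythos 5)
Your overall strategy is exactly the paper's: negate the statement, assemble the failing sequences into unital $*$-homomorphisms $\varphi_\omega, \psi_\omega : A \rightarrow B_\omega$, verify fullness, nuclearity, and agreement of traces and of $K_0$, invoke Proposition \ref{prop:UltraproductUniqueness}, and then lift the resulting unitary to a sequence of unitaries in $B$ to contradict the choice of $\varphi_n, \psi_n$. The trace, fullness, $K_0$, and unitary-lifting steps are all carried out correctly; your use of a family of projections generating $K_0(A)$ in place of the paper's dense sequence in $\mathcal{P}_\infty(A)$ is a harmless variant, and your polar-decomposition lifting of the unitary is the standard argument the paper leaves implicit.

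The one genuine flaw is your justification of nuclearity of $\varphi_\omega$ and $\psi_\omega$: you assert that nuclearity ``passes to the induced maps from that of each $\varphi_n, \psi_n$''. This is false in general. Nuclearity of each component gives factorizations $\rho_n \theta_n$ through matrix algebras $\mathbb{M}_{k_n}$ of unbounded size, and these assemble only into a factorization through the ultraproduct $\prod_\omega \mathbb{M}_{k_n}$, which is not a matrix algebra; there is no way to combine them into approximate factorizations of $\varphi_\omega$ itself. Concretely, take $A = \mathrm{C}^*(\mathbb{F}_2)$, which is residually finite-dimensional (Choi) but not exact (Wassermann), and let $\varphi_n : A \rightarrow \mathbb{M}_{k_n} \subseteq \mathcal{Q}$ be built from a separating sequence of finite-dimensional representations, embedded unitally into $B = \mathcal{Q}$. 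Each $\varphi_n$ is exactly multiplicative and trivially nuclear, and $B$ satisfies every hypothesis of the lemma, yet the induced embedding $A \rightarrow \mathcal{Q}_\omega$ cannot be nuclear: by Kirchberg's characterization of exactness as nuclear embeddability, a C*-algebra admitting a nuclear embedding into some C*-algebra is exact. So componentwise nuclearity alone can never yield nuclearity of $\varphi_\omega$; the exactness of $A$ is the essential ingredient, and it is precisely here that the paper uses it, citing Lemma 3.3 of \cite{Dadarlat:QDMorphisms} (which requires $A$ exact, with nuclear components) to conclude that $\varphi_\omega$ and $\psi_\omega$ are nuclear. Your proof is repaired by replacing your one-line justification with this citation; everything else stands.
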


\begin{proof}
Assume the result is false and fix a faithful trace $\tau_A$ on $A$, a finite set $\mathcal{F} \subseteq A$, and $\varepsilon > 0$ where the result fails.  Let $(\mathcal{G}_n)_{n=1}^\infty$ be an increasing sequence of finite subsets of $A$ with dense union, let $(\delta_n)_{n=1}^\infty$ be a decreasing sequence of positive real numbers converging to zero, and let $(\mathcal{P}_n)_{n=1}^\infty$ be an increasing sequence of finite subsets of $\mathcal{P}_\infty(A)$ with dense\footnote{Here $\mathcal{P}_\infty(A)$ is equipped with the metric induced by the norm on $A \otimes \mathcal{K}$.} union such that $(\mathcal{G}_n, \delta_n, \mathcal{P}_n)$ is a $K_0$-triple for each $n \geq 1$.  For each $n \geq 1$, there are unital, completely positive, nuclear, $(\mathcal{G}_n, \delta_n)$-multiplicative maps $\varphi_n, \psi_n : A \rightarrow B$ such that $(\varphi_n)_\#(p) = (\psi_n)_\#(p)$, $|\tau_B(\varphi_n(a)) - \tau_A(a)| < \delta_n$, and $|\tau_B(\psi_n(a)) - \tau_A(a)| < \delta_n$ for all $a \in \mathcal{G}_n$ and $p \in \mathcal{P}_n$ but such that for each unitary $u_n \in B$, there is an $a \in \mathcal{F}$ with
\[ \| \varphi_n(a) - u_n \psi_n(a) u_n^* \| \geq \varepsilon. \]

Let $\varphi_\omega, \psi_\omega : A \rightarrow B_\omega$ denote the functions induced by the sequences $(\varphi_n)_{n=1}^\infty$ and $(\psi_n)_{n=1}^\infty$, respectively, and note that $\varphi_\omega$ and $\psi_\omega$ are unital $^*$-homomorphisms.  Since $A$ is exact, $\varphi_\omega$ and $\psi_\omega$ are nuclear by Proposition 3.3 in \cite{Dadarlat:QDMorphisms}.  Also, $\tau_{B_\omega} \varphi_{\omega} = \tau_{B_\omega} \psi_\omega = \tau_A$.  Since $\tau_A$ is faithful and $B_\omega$ has strict comparison with respect to $\tau_{B_\omega}$ by Proposition \ref{prop:TraceKernelExtension}, $\varphi_\omega$ and $\psi_\omega$ are full by Lemma 2.2 in \cite{TWW:Annals}.

Fix $p \in \mathcal{P}_n$ and let $d \geq 1$ be an integer with $p \in \mathbb{M}_d(A)$.  As $(\varphi_k)_\#(p) = (\psi_k)_\#(p)$ for all $k \geq n$, $[\chi(\varphi_k(p))] = [\chi(\psi_k(p))]$ in $K_0(B)$ for all $k \geq n$.  Note that $B$ has stable rank one by Corollary 6.6 in \cite{Rordam:UHFStable}, and hence $B$ has cancellation of projections by Proposition 6.5.1 of \cite{Blackadar:KTheory}.  Now, there is a partial isometry $v_k \in \mathbb{M}_d(B)$ with $v_k^*v_k = \chi(\varphi_k(p))$ and $v_k v_k^* = \chi(\psi_k(p))$ for all $k \geq n$.  The sequence $(v_k)_{k=1}^\infty$ defines a partial isometry $v$ in $\mathbb{M}_d(B_\omega)$ with $v^*v = \chi(\varphi_\omega(p)) = \varphi_\omega(p)$ and $vv^* = \chi(\psi_\omega(p)) = \psi_\omega(p)$.  Hence $[\varphi_\omega(p)] = [\psi_\omega(p)]$ in $K_0(B_\omega)$.  This shows $K_0(\varphi_\omega) = K_0(\psi_\omega)$.

Proposition \ref{prop:UltraproductUniqueness} now shows there is a unitary $u \in B_\omega$ with $\varphi_\omega(a) = u \psi_\omega(a) u^*$ for all $a \in A$.  If $(u_n)_{n=1}^\infty \subseteq B$ is a sequence of unitaries lifting $u$, then for each $a \in A$,
\[ \lim_{n \rightarrow \omega} \| \varphi_n(a) - u_n \psi_n(a) u_n^* \| = 0, \]
and in particular, for some integer $n \geq 1$,
\[ \| \varphi_n(a) - u_n \psi_n(a) u_n^* \| < \varepsilon \]
for all $a \in \mathcal{F}$ which is a contradiction.
\end{proof}

Intertwining the previous two lemmas produces the following classification theorem which is the main technical result of the paper.

\begin{theorem}\label{thm:Classification}
Suppose $A$ is a separable, unital, exact $C^*$-algebra satisfying the UCT and $B$ is a simple, unital, $\mathcal{Q}$-stable $C^*$-algebra with a unique trace $\tau_B$ such that every quasitrace on $B$ is a trace and $K_1(B) = 0$.
\begin{enumerate}
  \item If $\tau_A$ is a faithful, amenable trace on $A$ and $\sigma : K_0(A) \rightarrow K_0(B)$ is a group homomorphism such that $\hat{\tau}_B \sigma = \hat{\tau}_A$ and $\sigma([1_A]) = [1_B]$, then there is a unital, faithful, nuclear $^*$-homomorphism $\varphi : A \rightarrow B$ such that $K_0(\varphi) = \sigma$ and $\tau_B \varphi = \tau_A$.
  \item If $\varphi, \psi : A \rightarrow B$ are unital, faithful, nuclear $^*$-homomorphisms with $K_0(\varphi) = K_0(\psi)$ and $\tau_B \varphi = \tau_B \psi$, then there is a sequence of unitaries $(u_n)_{n=1}^\infty \subseteq B$ such that
      \[ \lim_{n \rightarrow \infty} \| \varphi(a) - u_n \psi(a) u_n^* \| = 0 \]
      for all $a \in A$.
\end{enumerate}
\end{theorem}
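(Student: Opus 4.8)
The plan is to treat the two parts separately: the uniqueness statement (2) follows directly from Proposition \ref{prop:UltraproductUniqueness}, while the existence statement (1) is obtained by a one-sided Elliott-type approximate intertwining of Lemmas \ref{lemma:ApproximateExistence} and \ref{lemma:ApproximateUniqueness}. For (2), I would first compose $\varphi$ and $\psi$ with the diagonal embedding $\iota_B : B \rightarrow B_\omega$. Since $\tau_B$ is the unique trace on the simple C*-algebra $B$ it is faithful, so $\tau_A := \tau_B \varphi = \tau_B \psi$ is faithful as $\varphi$ and $\psi$ are faithful; because $B_\omega$ has strict comparison by Proposition \ref{prop:TraceKernelExtension}, Lemma 2.2 of \cite{TWW:Annals} shows $\iota_B \varphi$ and $\iota_B \psi$ are full. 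They are also unital and nuclear with $K_0(\iota_B \varphi) = K_0(\iota_B \psi)$ and $\tau_{B_\omega} \iota_B \varphi = \tau_{B_\omega} \iota_B \psi$, so Proposition \ref{prop:UltraproductUniqueness} yields a unitary $u \in B_\omega$ with $\iota_B \varphi = \mathrm{ad}(u) \iota_B \psi$. Lifting $u$ to a bounded sequence $(x_n)$ in $B$ and applying the polar decomposition to the eventually invertible $x_n$ produces unitaries $(u_n) \subseteq B$ representing $u$, so that $\lim_{n \rightarrow \omega} \| \varphi(a) - u_n \psi(a) u_n^* \| = 0$ for all $a \in A$; a reindexing over a countable dense subset of $A$ then gives an honest sequence with $\lim_{n \rightarrow \infty}$ convergence.

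For (1), I would set up a one-sided intertwining. Fix an increasing sequence of finite sets $\mathcal{F}_n \subseteq A$ with dense union and put $\varepsilon_n = 2^{-n}$. Feeding $(\mathcal{F}_n, \varepsilon_n)$ into Lemma \ref{lemma:ApproximateUniqueness} yields $K_0$-triples $(\mathcal{G}_n, \delta_n, \mathcal{P}_n)$, which I would enlarge so that the triples are increasing and $\mathcal{F}_n \subseteq \mathcal{G}_n$. Lemma \ref{lemma:ApproximateExistence} then supplies unital, nuclear, $(\mathcal{G}_n, \delta_n)$-multiplicative maps $\varphi_n : A \rightarrow B$ with $(\varphi_n)_\#(p) = \sigma([p])$ for $p \in \mathcal{P}_n$ and $|\tau_B \varphi_n(a) - \tau_A(a)| < \delta_n$ for $a \in \mathcal{G}_n$. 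Because the triples are nested, both $\varphi_n$ and $\varphi_{n+1}$ satisfy the level-$n$ hypotheses of Lemma \ref{lemma:ApproximateUniqueness}, so there is a unitary $v_n \in B$ with $\| \varphi_n(a) - v_n \varphi_{n+1}(a) v_n^* \| < \varepsilon_n$ for all $a \in \mathcal{F}_n$.

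Setting $\Phi_n = \mathrm{ad}(v_1 \cdots v_{n-1}) \varphi_n$, the estimates telescope to $\| \Phi_n(a) - \Phi_{n+1}(a) \| < \varepsilon_n$ for $a \in \mathcal{F}_n$; as $\sum_n \varepsilon_n < \infty$ and $\bigcup_n \mathcal{F}_n$ is dense, the contractions $\Phi_n$ converge in point norm to a map $\varphi : A \rightarrow B$. Since each $\Phi_n$ is unital, self-adjoint, contractive, and $(\mathcal{G}_n, \delta_n)$-multiplicative with $\delta_n \rightarrow 0$, the limit $\varphi$ is a unital *-homomorphism, and it is nuclear as a point-norm limit of nuclear completely positive maps. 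Conjugation by unitaries preserves the trace, so $\tau_B \varphi(a) = \lim_n \tau_B \varphi_n(a) = \tau_A(a)$, and since $\tau_A$ is faithful a trace-preserving *-homomorphism is automatically faithful. Finally, for each projection $p \in \mathcal{P}_\infty(A)$, once $\varphi(p)$ is norm-close to the near-projection $\Phi_n(p)$ one has $[\varphi(p)] = [\chi(\Phi_n(p))] = (\varphi_n)_\#(p) = \sigma([p])$, and such classes generate $K_0(A)$, so $K_0(\varphi) = \sigma$.

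I expect the main obstacle to be the bookkeeping of the intertwining: arranging the $K_0$-triples produced by Lemma \ref{lemma:ApproximateUniqueness} to be compatible with the maps produced by Lemma \ref{lemma:ApproximateExistence}, so that consecutive maps are genuinely comparable, and then confirming that nuclearity and the prescribed $K_0$- and trace data survive the point-norm limit. All the analytic content is already concentrated in the two lemmas, so the remaining work is the verification that these invariants pass to the limit.
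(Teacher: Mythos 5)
Your proposal is correct and follows essentially the same route as the paper: part (1) is exactly the paper's one-sided intertwining of Lemmas \ref{lemma:ApproximateExistence} and \ref{lemma:ApproximateUniqueness}, with you additionally writing out the verification (left implicit in the paper) that multiplicativity, nuclearity, faithfulness, and the $K_0$- and trace data survive the point-norm limit. The only divergence is in part (2), where the paper simply applies Lemma \ref{lemma:ApproximateUniqueness} to $\varphi$ and $\psi$ (honest *-homomorphisms being trivially $(\mathcal{G},\delta)$-multiplicative for every triple), whereas you unroll that lemma's proof by passing to $B_\omega$, invoking Proposition \ref{prop:UltraproductUniqueness}, and lifting the resulting unitary to a sequence of unitaries in $B$ --- the same underlying argument, just made direct.
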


\begin{proof}
Note that (2) follows immediately from Lemma \ref{lemma:ApproximateUniqueness} with $\tau_A := \tau_B \varphi$.  For (1), fix an increasing sequence of finite sets $\mathcal{F}_n \subseteq A$ with dense union and a sequence $\varepsilon_n > 0$ with $\sum_n \varepsilon_n < \infty$.  Let $(\mathcal{G}_n, \delta_n, \mathcal{P}_n)$ be $K_0$-triples for $A$ satisfying Lemma \ref{lemma:ApproximateUniqueness} for the trace $\tau_A$ and the pair $(\mathcal{F}_n, \varepsilon_n)$.  Enlarging $\mathcal{G}_n$ and $\mathcal{P}_n$ and decreasing $\delta_n$, we may assume the $\mathcal{G}_n$ are increasing with union dense in $A$, the $\delta_n$ are decreasing and converging to zero, and the $\mathcal{P}_n$ are increasing with union dense in $\mathcal{P}_\infty(A)$.

By Lemma \ref{lemma:ApproximateExistence}, for each $n \geq 1$, there is a unital, completely positive, nuclear, $(\mathcal{G}_n, \delta_n)$-multiplicative map $\psi_n : A \rightarrow B$ with $(\psi_n)_\#(p) = \sigma([p])$ for all $p \in \mathcal{P}_n$ and $|\tau_B(\psi_n(a)) - \tau_A(a)| < \delta_n$ for all $a \in \mathcal{G}_n$.  By Lemma \ref{lemma:ApproximateUniqueness}, for each $n \geq 1$, there is a unitary $u_{n+1} \in B$ such that
\[ \| \psi_n(a) - u_{n+1} \psi_{n+1}(a) u_{n+1}^* \| < \varepsilon_n \]
for all $a \in \mathcal{F}_n$.  Define $\varphi_1 = \psi_1$ and define $\varphi_n = \mathrm{ad}(u_2 u_3 \cdots u_n) \psi_n$ for $n \geq 2$.  Then for all $n \geq 1$ and $a \in \mathcal{F}_n$,
\[ \| \varphi_n(a) - \varphi_{n+1}(a) \| = \| \psi_n(a) - u_{n+1} \psi_{n+1}(a) u_{n+1}^* \| < \varepsilon_n. \]
This implies $(\varphi_n(a))_{n=1}^\infty$ is Cauchy for all $a \in \bigcup_{n=1}^\infty \mathcal{F}_n$ whence, by an $\varepsilon/3$ argument, is Cauchy for all $a \in A$.  The desired map $\varphi : A \rightarrow B$ is given by $\varphi(a) = \lim_{n \rightarrow \infty} \varphi_n(a)$.
\end{proof}

The following result provides the rigorous statement of Theorem D.

\begin{corollary}[Theorem D]\label{cor:TheoremD}
Suppose $A$ is a separable, unital, exact $C^*$-algebra satisfying the UCT and $B$ is a simple, unital AF-algebra with a unique trace $\tau_B$ and divisible $K_0$-group.
\begin{enumerate}
  \item If $\tau_A$ is a faithful, amenable trace on $A$ and $\sigma : K_0(A) \rightarrow K_0(B)$ is a group homomorphism such that $\hat{\tau}_B \sigma = \hat{\tau}_A$ and $\sigma([1_A]) = [1_B]$, then there is a unital, faithful $^*$-homomorphism $\varphi : A \rightarrow B$ such that $K_0(\varphi) = \sigma$ and $\tau_B \varphi = \tau_A$.
  \item If $\varphi, \psi : A \rightarrow B$ are unital, faithful $^*$-homomorphisms with $K_0(\varphi) = K_0(\psi)$ and $\tau_B \varphi = \tau_B \psi$, then there is a sequence of unitaries $(u_n)_{n=1}^\infty \subseteq B$ such that
      \[ \lim_{n \rightarrow \infty} \| \varphi(a) - u_n \psi(a) u_n^* \| = 0 \]
      for all $a \in A$.
\end{enumerate}
\end{corollary}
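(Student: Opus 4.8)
The plan is to deduce this corollary directly from Theorem \ref{thm:Classification}. A simple, unital AF-algebra $B$ is separable and, being AF, nuclear; so the only real work is to check that the structural hypotheses on $B$ in Theorem \ref{thm:Classification} hold, and to observe that the distinction between ``nuclear $*$-homomorphism'' and ``$*$-homomorphism'' disappears when the codomain is nuclear. Once these points are in place, statements (1) and (2) will follow by applying the corresponding parts of Theorem \ref{thm:Classification} essentially verbatim.

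First I would verify the three structural hypotheses on $B$. Since $B$ is nuclear it is exact, so Haagerup's theorem ensures every quasitrace on $B$ is a trace, and $K_1(B) = 0$ holds for any AF-algebra. The substantive point is $\mathcal{Q}$-stability. Here I would argue that $K_0(B)$ is a dimension group, hence a torsion-free, unperforated ordered abelian group, and being divisible by hypothesis it is uniquely divisible, i.e.\ a $\mathbb{Q}$-vector space. The first-factor embedding $B \to B \otimes \mathcal{Q}$ induces on $K_0$ the rationalization map $g \mapsto g \otimes 1$ from $K_0(B)$ into $K_0(B \otimes \mathcal{Q}) \cong K_0(B) \otimes_{\mathbb{Z}} \mathbb{Q}$ (there is no $\mathrm{Tor}$ term since $K_*(\mathcal{Q}) = (\mathbb{Q}, 0)$ is torsion-free). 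As $K_0(B)$ is already a $\mathbb{Q}$-vector space this map is a group isomorphism; unperforation of $K_0(B)$ shows that $k!\,g \geq 0$ forces $g \geq 0$, so the map reflects positivity and is therefore an isomorphism of ordered groups carrying $[1_B]$ to $[1_{B \otimes \mathcal{Q}}]$. By Elliott's classification of AF-algebras this forces $B \cong B \otimes \mathcal{Q}$, so the separable C*-algebra $B$ is $\mathcal{Q}$-stable.

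It then remains to note that, because $B$ is nuclear, every $*$-homomorphism $\varphi : A \to B$ is automatically nuclear: an approximate factorization of $\mathrm{id}_B$ through matrix algebras by completely positive maps composes with $\varphi$ to exhibit $\varphi$ as a point-norm limit of completely positive maps through matrix algebras. With $B$ now satisfying all the hypotheses of Theorem \ref{thm:Classification}, part (1) of that theorem produces a unital, faithful, \emph{nuclear} $*$-homomorphism realizing the prescribed $\sigma$ and $\tau_A$, and we simply drop the adjective ``nuclear'' to obtain (1). For (2), the given unital, faithful $*$-homomorphisms $\varphi, \psi$ are nuclear by the observation just made, so Theorem \ref{thm:Classification}(2) applies directly. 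The only genuinely nontrivial step in this deduction is the verification of $\mathcal{Q}$-stability; I expect the small $K$-theoretic computation there---identifying the first-factor embedding with rationalization and confirming via unperforation that it is an order isomorphism---to be the main point requiring care, after which Elliott's theorem closes the argument and the rest is a routine translation between the two sets of hypotheses.
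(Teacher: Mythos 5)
Your proposal is correct and follows essentially the same route as the paper: verify that $K_1(B)=0$, that quasitraces on $B$ are traces, and that $B$ is $\mathcal{Q}$-stable (via divisibility of $K_0(B)$ and Elliott's classification of AF-algebras), note that nuclearity of $B$ makes every $*$-homomorphism $A \to B$ nuclear, and then apply Theorem \ref{thm:Classification}. The only cosmetic difference is that you identify $K_0(B\otimes\mathcal{Q}) \cong K_0(B)\otimes_{\mathbb{Z}}\mathbb{Q}$ via the K\"unneth formula, whereas the paper computes it directly from the inductive limit description of $B \otimes \mathcal{Q}$ as a limit of matrix amplifications $\mathbb{M}_k(B)$; your explicit use of unperforation to see that the comparison map is an order isomorphism is a point the paper passes over more quickly.
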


\begin{proof}
As $B$ is an AF-algebra, we have that $K_0(B)$ is torsion-free, $K_1(B) = 0$, and every quasitrace on $B$ is a trace.  As $K_0(B)$ is divisible and torsion-free, the homomorphism $K_0(B) \rightarrow K_0(B) \otimes_{\mathbb{Z}} \mathbb{Q}$ given by $[p] \mapsto [p] \otimes 1_\mathbb{Q}$ for all projections $p \in B \otimes \mathcal{K}$ is an isomorphism.  Also, viewing $B \otimes \mathcal{Q}$ as the inductive limit of the diagonal embeddings $\mathbb{M}_k(B) \rightarrow \mathbb{M}_\ell(B)$ for integers $k, \ell \geq 1$ with $k$ dividing $\ell$, the group homomorphisms
\[ K_0(\mathbb{M}_k(B)) \rightarrow K_0(B) \otimes_\mathbb{Z} \mathbb{Q} : [p] \mapsto [p] \otimes \frac1k \]
induce an isomorphism $K_0(B \otimes \mathcal{Q}) \rightarrow K_0(B) \otimes_\mathbb{Z} \mathbb{Q}$.  These isomorphisms induce an isomorphism $K_0(B) \cong K_0(B \otimes \mathcal{Q})$ of ordered abelian groups, and it follows from Elliott's classification of AF-algebras that $B \cong B \otimes \mathcal{Q}$.  As $B$ is nuclear, all $^*$-homomorphisms $A \rightarrow B$ are nuclear, so the result follows from Theorem \ref{thm:Classification}.
\end{proof}

\section{Applications}\label{sec:Applications}

\begin{proof}[Proof of Theorem A]
We may assume $A$ is unital.  Let $\tau_A$ be a faithful, amenable trace on $A$ and let
\[ G_0 = \mathrm{span}_\mathbb{Q} \{ (\tau_A \otimes \mathrm{Tr}_\mathcal{K}) (p) : p \in A \otimes \mathcal{K} \text{ is a projection} \} \subseteq \mathbb{R}. \]
By \cite{Elliott:TotallyOrderedK0}, if $G_0$ is equipped with the order and unit inherited from $\mathbb{R}$, then $G_0$ is a simple dimension group with a unique state given by the inclusion $G_0 \hookrightarrow \mathbb{R}$.  Let $B$ be a unital AF-algebra with $K_0(B) \cong G_0$ and note that $B$ is simple and has a unique trace.  Composing the map $K_0(A) \rightarrow G_0$ induced by the trace with an isomorphism $G_0 \rightarrow K_0(B)$ produces a group homomorphism $\sigma : K_0(A) \rightarrow K_0(B)$ compatible with the unit and the trace.  Corollary \ref{cor:TheoremD} now implies the existence of a unital, trace-preserving embedding $A \rightarrow B$.
\end{proof}

\begin{proof}[Proof of Theorem B]
The implications (1) $\Leftrightarrow$ (2) $\Leftarrow$ (3) are well known.  To show (1) implies (3), let $G$ be a countable, discrete, amenable group.  A result of Higson and Kasparov in \cite{HigsonKasparov:BaumConnes} shows $G$ satisfies the Baum-Connes conjecture, and hence a result of L\"uck in \cite{Luck:TraceConjecture} shows that if $\tau_{\mathrm{C}^*_r(G)}$ denote the canonical trace on $\mathrm{C}^*_r(G)$, then for all projections $p \in \mathrm{C}^*_r(G) \otimes \mathcal{K}$, $(\tau_{\mathrm{C}^*_r(G)} \otimes \mathrm{Tr}_\mathcal{K})(p) \in \mathbb{Q}$.  As $K_0(\mathcal{Q}) \cong \mathbb{Q}$, the trace on $\mathrm{C}^*_r(G)$ produces a group homomorphism $K_0(\mathrm{C}^*_r(G)) \rightarrow K_0(\mathcal{Q})$ compatible with the unit and trace.  A result of Tu in \cite{Tu:AmenableGroupoid} shows $\mathrm{C}^*_r(G)$ satisfies the UCT, and the result follows from Corollary \ref{cor:TheoremD}.
\end{proof}

\begin{proof}[Proof of Theorem C]
If $C_0(X) \rtimes_r G$ embeds into a simple, unital AF-algebra $B$, then any trace on $B$ induces a faithful, $G$-invariant, Borel measure on $X$ with mass at most 1, and hence after rescaling, $X$ admits a probability measure of the desired form.

Conversely, suppose $G$ is amenable and suppose $\mu$ is a faithful, $G$-invariant, Borel, probability measure on $X$.  If $E : C_0(X) \rtimes G \rightarrow C_0(X)$ is the canonical conditional expectation, then the map $C_0(X) \rtimes G \rightarrow \mathbb{C}$ given by
\[ a \mapsto \int_X E(a) \, d\mu \]
is a faithful trace on $C_0(X) \rtimes G$ which is amenable as $C_0(X) \rtimes_r G$ is nuclear.  A result of Tu in \cite{Tu:AmenableGroupoid} shows $C_0(X) \rtimes_r G$ satisfies the UCT, so the result follows from Theorem A.
\end{proof}

In \cite{Lin:AFEmbAHalg}, Lin has shown that if $A$ is an AH-algebra and $\alpha$ is an action of $\mathbb{Z}^k$ on $A$ such that $A$ admits a faithful, $\alpha$-invariant trace, then the crossed product $A \rtimes_r \mathbb{Z}^k$ embeds into an AF-algebra.  Theorem A implies the following generalization of this result.

\begin{corollary}
Suppose $A$ is a separable, exact $C^*$-algebra satisfying the UCT, $G$ is a countable, discrete, torsion-free, abelian group, and $\alpha$ is an action of $G$ on $A$.  If $A$ admits a faithful, $\alpha$-invariant, amenable trace, then $A \rtimes_\alpha G$ embeds into an AF-algebra.
\end{corollary}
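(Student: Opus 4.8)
The plan is to verify that $A \rtimes_\alpha G$ satisfies all four hypotheses of Theorem A and then invoke Theorem A directly. Since $G$ is abelian it is amenable, so the full and reduced crossed products agree and I write $A \rtimes_\alpha G$ for either. Separability is immediate from the separability of $A$ and the countability of $G$. Exactness of $A \rtimes_\alpha G$ follows from the exactness of $A$ together with the fact that amenable groups are exact, using the theorem of Kirchberg and Wassermann that a crossed product of an exact C*-algebra by an exact group is exact.

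Next I would produce the trace. Let $E : A \rtimes_\alpha G \rightarrow A$ be the canonical faithful conditional expectation. Because $\tau_A$ is $\alpha$-invariant, $\tau := \tau_A \circ E$ is a trace on $A \rtimes_\alpha G$, and it is faithful since both $\tau_A$ and $E$ are faithful. The first substantial point is the amenability of $\tau$. Here one cannot argue as in the proof of Theorem C, since $A$ is only assumed exact and so $A \rtimes_\alpha G$ need not be nuclear; instead I would pass to the GNS picture. The weak closure $\pi_\tau(A \rtimes_\alpha G)''$ is the von Neumann crossed product $M \rtimes G$, where $M = \pi_{\tau_A}(A)''$ carries the induced $G$-action. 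As $A$ is exact and $\tau_A$ is amenable, $M$ is hyperfinite, and since $G$ is amenable, Connes's theorem shows the crossed product $M \rtimes G$ is again injective, hence hyperfinite. Because $A \rtimes_\alpha G$ is exact with hyperfinite GNS von Neumann algebra, the same characterization (as recorded in the introduction and in Theorem 3.2.2 of \cite{Brown:Memoirs}) shows $\tau$ is amenable.

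The main obstacle is the UCT for $A \rtimes_\alpha G$, and this is precisely where the torsion-free abelian hypothesis on $G$ enters. I would write $G$ as an increasing union $G = \bigcup_n G_n$ of finitely generated subgroups; each $G_n$ is finitely generated and torsion-free, hence $G_n \cong \mathbb{Z}^{k_n}$ by the structure theorem for finitely generated abelian groups. Correspondingly $A \rtimes_\alpha G = \underset{\longrightarrow}{\lim}\,(A \rtimes_\alpha G_n)$ with injective connecting maps. Using the iterated decomposition $A \rtimes_\alpha \mathbb{Z}^{k} \cong (A \rtimes_\alpha \mathbb{Z}^{k-1}) \rtimes \mathbb{Z}$ together with the standard fact that the UCT (bootstrap) class is closed under crossed products by $\mathbb{Z}$, an induction on $k_n$ shows that each $A \rtimes_\alpha G_n$ lies in the bootstrap class. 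Since the bootstrap class is closed under countable inductive limits, $A \rtimes_\alpha G$ satisfies the UCT.

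With separability, exactness, the UCT, and the faithful amenable trace $\tau$ all in hand, Theorem A provides a trace-preserving embedding of $A \rtimes_\alpha G$ into a simple, unital AF-algebra with unique trace; in particular $A \rtimes_\alpha G$ embeds into an AF-algebra. The two steps requiring genuine input beyond Theorem A are the amenability of $\tau$, where the exactness hypothesis (rather than nuclearity) is essential and is handled through hyperfiniteness of the crossed product von Neumann algebra, and the UCT, where the torsion-free abelian structure is used to reduce to the $\mathbb{Z}$-crossed-product case.
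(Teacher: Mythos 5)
Your proof is correct and follows essentially the same route as the paper: write $G$ as an increasing union of finitely generated, torsion-free, abelian (hence $\mathbb{Z}^{k}$) subgroups to obtain the UCT for the inductive limit $A \rtimes_\alpha G$, and deduce amenability of $\tau_A \circ E$ from injectivity of $\pi_{\tau_A}(A)'' \,\bar{\rtimes}\, G$ via Connes's theorem, then apply Theorem A. The only difference is that you make explicit some steps the paper leaves implicit, namely exactness of the crossed product (via Kirchberg--Wassermann) and the bootstrap-class induction behind the UCT claim.
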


\begin{proof}
Let $G_n$ be an increasing sequence of finitely generated subgroups of $G$ with union $G$.  If $\alpha_n$ is the restriction of $\alpha$ to an action of $G_n$ on $A$, then the canonical $^*$-homomorphism
\[ \underset{\longrightarrow}{\lim} \, A \rtimes_{\alpha_n} G_n \longrightarrow A \rtimes_\alpha G \]
is an isomorphism.  As each $G_n$ is finitely generated, torsion-free, and abelian, $G_n \cong \mathbb{Z}^{d(n)}$ for some integer $d(n) \geq 1$.  It follows that $A \rtimes_\alpha G$ satisfies the UCT.  Also, $A$ is separable and exact, and if $E : A \rtimes_\alpha G \rightarrow A$ is the canonical conditional expectation and $\tau_A$ is a faithful, $\alpha$-invariant, amenable trace on $A$, then $\tau_A E$ is a faithful trace on $A \rtimes_\alpha G$.  As $\tau_A$ is amenable and $A$ is exact, $\pi_{\tau_A}(A)''$ is injective.  Therefore,
\[ \pi_{\tau_A E}(A \rtimes_\alpha G)'' \cong \pi_{\tau_A}(A)'' \bar{\rtimes} G, \]
is injective by Proposition 6.8 of \cite{Connes:InjectiveFactors}, and hence $\tau_A E$ is amenable.  The result now follows from Theorem A.
\end{proof}

The AF-embedding problem for C$^*$-algebras of countable 1-graphs was solved in \cite{Schafhauser:Graph1}.  For countable, cofinal, row-finite 2-graphs with no sources, a similar result was obtained by Clark, an Huef, and Sims in \cite{ClarkHuefSims}.  Using their techniques together with Theorem A, the main result of \cite{ClarkHuefSims} extends to $k$-graphs.

\begin{corollary}
Let $k \geq 1$ be an integer, let $\Lambda$ be a countable, cofinal, row-finite $k$-graph with no sources, and let $A_1, \ldots, A_k$ denote the coordinate matrices of $\Lambda$.  The following are equivalent:
\begin{enumerate}
  \item $\mathrm{C}^*(\Lambda)$ embeds into an AF-algebra;
  \item $\mathrm{C}^*(\Lambda)$ is quasidiagonal;
  \item $\mathrm{C}^*(\Lambda)$ is stably finite;
  \item $\left(\sum_{i=1}^k \mathrm{im}(1 - A_i^t) \right) \cap \mathbb{Z}_+ \Lambda^0 = \{0\}$;
  \item $\Lambda$ admits a faithful graph trace.
\end{enumerate}
\end{corollary}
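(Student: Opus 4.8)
The plan is to establish the cycle $(1) \Rightarrow (2) \Rightarrow (3) \Rightarrow (4) \Rightarrow (5) \Rightarrow (1)$. The implications among $(2)$, $(3)$, $(4)$, and $(5)$, together with $(1) \Rightarrow (2)$, are either standard or follow from the combinatorial and $K$-theoretic analysis of $k$-graph algebras carried out by Clark, an Huef, and Sims in \cite{ClarkHuefSims}, whose arguments for these particular steps make no essential use of the restriction $k = 2$. The only implication requiring a genuinely new ingredient is $(5) \Rightarrow (1)$, which will be supplied by Theorem A; this is the step for which \cite{ClarkHuefSims} relied on structural features specific to rank two.

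For $(1) \Rightarrow (2)$, recall that AF-algebras are quasidiagonal and that quasidiagonality passes to C*-subalgebras; and for $(2) \Rightarrow (3)$, that quasidiagonal C*-algebras are stably finite. Under the standing hypotheses on $\Lambda$, the algebra $\mathrm{C}^*(\Lambda)$ is separable, nuclear, and satisfies the UCT, and the existence of a faithful graph trace is governed by the linear-algebraic kernel condition $(4)$; thus $(4) \Leftrightarrow (5)$ is purely combinatorial and proceeds exactly as in \cite{ClarkHuefSims}. For $(3) \Rightarrow (4)$, stable finiteness of $\mathrm{C}^*(\Lambda)$ rules out the presence of infinite projections, and combined with the computation of $K_0(\mathrm{C}^*(\Lambda))$ in terms of the coordinate matrices $A_1, \ldots, A_k$, this yields the condition $(4)$, again following the rank-two analysis of \cite{ClarkHuefSims}.

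For the remaining implication $(5) \Rightarrow (1)$, suppose $\Lambda$ admits a faithful graph trace. This induces a faithful trace $\tau$ on $\mathrm{C}^*(\Lambda)$, and since $\mathrm{C}^*(\Lambda)$ is nuclear, $\tau$ is amenable by Theorem 4.2.1 of \cite{Brown:Memoirs}. As $\mathrm{C}^*(\Lambda)$ is separable, exact, and satisfies the UCT, Theorem A provides a trace-preserving embedding of $\mathrm{C}^*(\Lambda)$ into a simple, unital AF-algebra, giving $(1)$. Theorem A applies uniformly in $k$, appealing only to nuclearity, the UCT, and the faithfulness of the trace, none of which depends on the rank.

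The substantive technical content therefore lies in the block $(3) \Rightarrow (4) \Leftrightarrow (5)$, where one must unwind the definition of a graph trace and invoke the $K$-theory of $\mathrm{C}^*(\Lambda)$; these ingredients, however, are already available for arbitrary $k$ in \cite{ClarkHuefSims} and the surrounding $k$-graph literature. The conceptual obstacle that previously confined the equivalence to rank two—producing an AF-embedding from a faithful trace without exploiting the fine structure of the graph algebra—is resolved in one stroke by Theorem A, which I expect to carry the full weight of the new step $(5) \Rightarrow (1)$.
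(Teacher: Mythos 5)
Your proposal is correct and follows essentially the same route as the paper: the equivalences among (2)--(5) are exactly Theorem 1.1 of \cite{ClarkHuefSims} (which already treats general $k$; only the AF-embedding conclusion there was confined to $k=2$), the implication (1) $\Rightarrow$ (2) is standard, and the new step (5) $\Rightarrow$ (1) is obtained, as in the paper, by running the argument of Lemma 3.7 of \cite{ClarkHuefSims} with Theorem A substituted for the quasidiagonality result (Corollary B) of \cite{TWW:Annals}.
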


\begin{proof}
The equivalence of (2) through (5) is Theorem 1.1 in \cite{ClarkHuefSims}, and it is well known that (1) implies (2).  The same proof given in Lemma 3.7 of \cite{ClarkHuefSims} shows (5) implies (1) by appealing to Theorem A above in place of Corollary B of \cite{TWW:Annals}.
\end{proof}

A C$^*$-algebra $A$ is \emph{matricial field} (\emph{MF}) if there is a net $\varphi_i : A \rightarrow \mathbb{M}_{n(i)}$ of linear, self-adjoint functions such that
\[ \| \varphi_i(aa') - \varphi_i(a) \varphi_i(a') \| \rightarrow 0 \quad \text{and} \quad \| \varphi_i(a) \| \rightarrow \|a\| \]
for all $a, a' \in A$.  Similarly, a trace $\tau_A$ on a C$^*$-algebra $A$ is called \emph{matricial field} (\emph{MF}) if there is a net $\varphi_i : A \rightarrow \mathbb{M}_{n(i)}$ of linear, self-adjoint functions such that
\[ \| \varphi_i(aa') - \varphi_i(a) \varphi_i(a') \| \rightarrow 0 \quad \text{and} \quad \tau_{\mathbb{M}_{n(i)}}(\varphi_i(a)) \rightarrow \tau_A(a) \]
for all $a, a' \in A$.

The class of MF algebras was introduced by Blackadar and Kirchberg in \cite{BlackadarKirchberg:MFAlgebras}.  Haagerup and Thorbj{\o}rnsen have shown in \cite{HaagerupThorbjornsen} that $\mathrm{C}^*_r(F_2)$ is MF where $F_2$ is the free group on two generators.  From here, it follows as well that $\mathrm{C}^*_r(F)$ is MF for all free groups $F$ (one first reduces to the case where $F$ is countable and then considers an embedding $F \hookrightarrow F_2$).  Using this result, many reduced crossed products of C$^*$-algebras by free groups have been shown to be MF in \cite{KerrNowak, Rainone:MF, RainoneSchafhauser, Schafhauser:MF}.  The results above allow for substantial generalizations of these results.

\begin{corollary}\label{cor:MFTrace}
Suppose $A$ is a separable, exact $C^*$-algebra satisfying the UCT and $\alpha$ is an action of a free group $F$ on $A$.  If $\tau$ is a trace on $A \rtimes_r F$ such that $\tau|_A$ is faithful and amenable, then $\tau$ is MF.
\end{corollary}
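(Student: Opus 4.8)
The plan is to recast the matricial field property as an embedding problem and then to feed the operator-norm finite-dimensional approximations for $A$ furnished by Theorem A into the strong random matrix models for the free group of Haagerup and Thorbj\o rnsen \cite{HaagerupThorbjornsen}. Write $C = A \rtimes_r F$ and $\tau_A = \tau|_A$. Unwinding the definition, $\tau$ is MF precisely when there is a trace-preserving *-homomorphism from $C$ into a norm ultraproduct $\prod_\omega \mathbb{M}_{k(n)}$ of matrix algebras carrying its limit trace (approximate multiplicativity in operator norm becomes exact multiplicativity in the ultraproduct, and any bounded linear self-adjoint lift recovers an approximating net); so it is enough to build such a *-homomorphism. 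As MF-ness is tested on finite subsets and $A$ is separable, I may assume $A$ is unital and $F$ is countable, and I note $C$ is separable and exact since free groups are exact. Because $\tau$ is tracial and $u_g a u_g^* = \alpha_g(a)$, the restriction $\tau_A$ is a faithful, amenable, $\alpha$-invariant trace on $A$. If $F$ is amenable (that is, trivial or $\mathbb{Z}$), then $\pi_\tau(C)''$ is hyperfinite and, $C$ being exact, $\tau$ is itself an amenable trace; as $C$ then also satisfies the UCT, the quasidiagonality theorem (Theorem \ref{thm:TWW}) shows $\tau$ is a quasidiagonal, hence MF, trace. So I may assume $F$ is nonabelian free.

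With $F$ nonabelian free, $F$ is C$^*$-simple with trivial amenable radical. Here I would invoke the Choquet-theoretic description of traces on reduced crossed products over C$^*$-simple groups (the input I learned from M.\ Kennedy): every trace on $C$ factors through the canonical conditional expectation $E : C \to A$, so that $\tau = \tau_A \circ E$. This both identifies $\tau$ explicitly and realises the GNS representation of $\tau$ on the standard form $L^2(M, \tau_A) \otimes \ell^2(F)$ of the von Neumann algebraic crossed product $M \bar{\rtimes} F$, where $M := \pi_{\tau_A}(A)''$.

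The decisive point is that amenability of $\tau_A$ by itself is too weak: it only provides finite-dimensional approximations that are almost multiplicative in the $2$-norm, equivalently a trace-preserving embedding of $A$ into the hyperfinite von Neumann algebra $M$, which detects Connes-embeddability but not the operator-norm condition defining MF-ness. This is exactly where Theorem A enters. Applying Theorem A to $(A, \tau_A)$ gives a trace-preserving embedding $A \hookrightarrow B$ into a simple, unital AF-algebra $B$ with unique trace; composing with the conditional expectations onto the finite-dimensional stages of $B$ exhibits $\tau_A$ as a \emph{quasidiagonal} trace, i.e.\ there are unital, completely positive maps $A \to \mathbb{M}_d$ that are approximately multiplicative \emph{in operator norm} and approximately $\tau_A$-preserving. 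This operator-norm refinement is precisely the hypothesis on the coefficient algebra required by the free-group constructions of \cite{RainoneSchafhauser} and \cite{Schafhauser:MF}, which were previously available only when $A$ is nuclear or via the quasidiagonality theorem of \cite{TWW:Annals}.

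Finally I would assemble the matrix model. On the standard form $L^2(M, \tau_A) \otimes \ell^2(F)$ I would approximate the $M$-variables by the operator-norm finite-dimensional maps just produced and the $\ell^2(F)$-variables by the Haagerup--Thorbj\o rnsen unitaries $w_g \in \mathbb{M}_m$, which satisfy $\tau_{\mathbb{M}_m}(w_g) \to \delta_{g,e}$ and, crucially, converge \emph{strongly} in the sense that $\| \sum_g c_g w_g \| \to \| \sum_g c_g \lambda_g \|$. Tensoring the two approximations and letting the $w_g$ implement the approximate covariance yields covariant maps $C \to \mathbb{M}_d \otimes \mathbb{M}_m$ that are almost multiplicative in operator norm and send $a u_g$ to an element whose matrix trace is close to $\delta_{g,e}\, \tau_A(a) = \tau(a u_g)$; passing to an ultraproduct produces the required trace-preserving *-homomorphism, so $\tau$ is MF. The main obstacle is exactly this operator-norm bookkeeping: merely tracial (weak) convergence of the random matrix model would recover only amenability of $\tau$, so it is the strong convergence of \cite{HaagerupThorbjornsen} working in tandem with the quasidiagonal (operator-norm) approximations for $A$ newly supplied by Theorem A that make the combined covariant model genuinely matricial field rather than only Connes-embeddable.
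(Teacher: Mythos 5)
Your overall skeleton matches the paper's: recast MF-ness as an ultraproduct embedding, use the fact that for nonabelian free $F$ every trace on $A \rtimes_r F$ factors through the conditional expectation $E$ (the paper cites \cite{HarpeSkandalis} for this), and treat $F \cong \mathbb{Z}$ separately. But the core of your argument in the main, nonabelian case has a genuine gap. Quasidiagonal approximations $\phi : A \rightarrow \mathbb{M}_d$ of $(A, \tau|_A)$ --- whether obtained from Theorem A or otherwise --- carry no information about the action $\alpha$, and the map $a u_g \mapsto \phi(a) \otimes w_g$ obtained by tensoring with Haagerup--Thorbj{\o}rnsen unitaries is not approximately multiplicative: $(a u_g)(b u_h) = a \alpha_g(b) u_{gh}$ is sent to $\phi(a \alpha_g(b)) \otimes w_{gh}$, while the product of the images is approximately $\phi(a)\phi(b) \otimes w_{gh}$, and these agree only if $\phi \alpha_g \approx \phi$, which fails in general. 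What the Rainone--Schafhauser machinery you cite actually requires is approximate \emph{covariance}: unitaries $V_g \in \mathbb{M}_d$ with $\phi \alpha_g \approx \mathrm{ad}(V_g)\phi$ which approximately form a representation of $F$, so that one can map $a u_g \mapsto \phi(a) V_g \otimes w_g$. Producing these $V_g$ is the entire difficulty, and it cannot be extracted from quasidiagonality of $\tau|_A$, which is an existence statement with no dynamical content. The paper gets covariance from its \emph{uniqueness} theorem: after using \cite{RainoneSchafhauser} to choose an $\alpha$-invariant group homomorphism $\sigma : K_0(A) \rightarrow K_0(\mathcal{Q}_\omega)$ compatible with the unit and trace, Proposition \ref{prop:UltraproductExistence} gives a unital, full, nuclear $\varphi : A \rightarrow \mathcal{Q}_\omega$ with $K_0(\varphi) = \sigma$ and $\tau_{\mathcal{Q}_\omega}\varphi = \tau|_A$; then $\varphi$ and $\varphi\alpha_s$ have the same $K_0$-map and trace, so Proposition \ref{prop:UltraproductUniqueness} yields unitaries $u_s \in \mathcal{Q}_\omega$ with $\varphi\alpha_s = \mathrm{ad}(u_s)\varphi$, and freeness of $F$ extends $s \mapsto u_s$ to a representation. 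Your proposal invokes no uniqueness or classification statement, and makes no mention of the $K$-theoretic invariance needed for that step, so it has no mechanism to produce the covariance on which the whole construction rests.

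There is also a gap in your case $F \cong \mathbb{Z}$: Theorem \ref{thm:TWW} requires the trace to be \emph{faithful}, but only $\tau|_A$ is assumed faithful; $\tau$ itself may be degenerate on $A \rtimes \mathbb{Z}$. Your claim that $\pi_\tau(A \rtimes \mathbb{Z})''$ is injective is in fact correct --- it is generated by the injective algebra $\pi_\tau(A)'' \cong \pi_{\tau|_A}(A)''$ together with a single normalizing unitary, and averaging over $\mathbb{Z}$ against an invariant mean produces a conditional expectation onto its commutant --- and, by exactness, this gives amenability of $\tau$ directly; if you supplied that argument it would actually replace the Choquet-simplex face argument the paper uses for the same purpose. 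But amenability alone does not license an appeal to Theorem \ref{thm:TWW}. The paper's route is to apply Theorem 3.8 of \cite{Gabe:TWW} to the faithful amenable trace $(\tau|_A)E$ to conclude that $A \rtimes \mathbb{Z}$ is quasidiagonal, and then Theorem 4.1 of \cite{Gabe:TWW} to upgrade every (not necessarily faithful) amenable trace on it, in particular $\tau$, to a quasidiagonal and hence MF trace; your write-up needs this additional step.
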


\begin{proof}
Adding a unit to $A$ if necessary, we may assume $A$ is unital.  By Theorem 4.8 in \cite{RainoneSchafhauser}, there is a group homomorphism $\sigma : K_0(A) \rightarrow K_0(\mathcal{Q}_\omega)$ such that $\sigma([1_A]) = [1_{\mathcal{Q}_\omega}]$, $\hat{\tau}_{\mathcal{Q}_\omega} \sigma = \hat{\tau}|_A$, and $\sigma K_0(\alpha_s) = \sigma$ for each $s \in F$.  By Proposition \ref{prop:UltraproductExistence}, there is a unital, full, nuclear $^*$-homomorphism $\varphi : A \rightarrow \mathcal{Q}_\omega$ such that $K_0(\varphi) = \sigma$ and $\tau_{\mathcal{Q}_\omega} \varphi = \tau|_A$.

For each $s$ in a free generating set for $F$, we have $K_0(\varphi \alpha_s) = K_0(\varphi)$ and $\tau_{\mathcal{Q}_\omega} \varphi \alpha_s = \tau_{\mathcal{Q}_\omega} \varphi$, and hence, by Proposition \ref{prop:UltraproductUniqueness}, there is a unitary $u_s \in \mathcal{Q}_\omega$ such that $\varphi \alpha_s = \mathrm{ad}(u_s) \varphi$.  As $F$ is free, the function $s \mapsto u_s$ extends to a unitary representation of $F$ on $\mathcal{Q}_\omega$ with $\varphi \alpha_s = \mathrm{ad}(u_s) \varphi$ for all $s \in F$.

This shows the trace $\tau|_A$ is $\alpha$-MF in the sense of \cite{RainoneSchafhauser}, and hence, if $E : A \rtimes_r F \rightarrow A$ denotes the conditional expectation, then the trace $(\tau|_A) E$ is MF.  When $F \not\cong \mathbb{Z}$, $\tau = (\tau|_A) E$ by \cite{HarpeSkandalis}, and therefore, $\tau$ is MF.

Now assume $F = \mathbb{Z}$.  Since $A$ is exact and $\tau|_A$ is amenable, $\pi_{\tau|_A}(A)''$ is injective.  Since
\[ \pi_{(\tau|_A) E}(A \rtimes \mathbb{Z})'' \cong \pi_{\tau|_A}(A)'' \bar{\rtimes} \mathbb{Z} \]
is injective by Proposition 6.8 of \cite{Connes:InjectiveFactors}, $(\tau|_A) E$ is amenable.  Now, $A \rtimes \mathbb{Z}$ is separable and exact, satisfies the UCT, and admits a faithful, amenable trace $(\tau|_A) E$.  By Theorem 3.8 in \cite{Gabe:TWW}, $(\tau|_A) E$ is quasidiagonal, and hence $A \rtimes \mathbb{Z}$ is quasidiagonal.  By Theorem 4.1 in \cite{Gabe:TWW}, every amenable trace on $A \rtimes \mathbb{Z}$ is quasidiagonal and, in particular, is MF.  So it suffices to show $\tau$ is amenable.

Let $T(A)$ denote the set of traces on $A$.  Since $A$ is unital, $T(A)$ is a Choquet simplex by Theorem 3.1.18 in \cite{Sakai}.  It was shown by Kirchberg in Lemma 3.4 of \cite{Kirchberg:FactorizationProperty} that the set of amenable traces on $A$ is a weak$^*$-closed face in $T(A)$, so by Corollary II.5.20 of \cite{Alfsen}, there is a continuous, affine function $f : T(A) \rightarrow [0, 1]$ such that for $\rho \in T(A)$, $f(\rho) = 0$ if, and only if, $\rho$ is amenable.  Let $\hat{\alpha}$ be the action of $\mathbb{T}$ on $A \rtimes \mathbb{Z}$ dual to $\alpha$.  Then
\[ (\tau|_A) E = \int_\mathbb{T} \tau \hat{\alpha}_z \, d z. \]
As $f((\tau|_A) E) = 0$ and the map $z \mapsto f(\tau \hat{\alpha}_z)$ is positive and continuous on $\mathbb{T}$, we have $f(\tau \hat{\alpha}_z) = 0$ for all $z \in \mathbb{T}$ by the faithfulness of the Lebesgue measure on $\mathbb{T}$.  Therefore, $\tau \hat{\alpha}_z$ is an amenable trace for all $z \in \mathbb{T}$, and in particular, $\tau = \tau \hat{\alpha}_1$ is an amenable trace.
\end{proof}

\begin{corollary}\label{cor:TypeIMFTrace}
If $A$ is a locally type I $C^*$-algebra and $\alpha$ is an action of a free group $F$ on $A$, then every trace on $A \rtimes_r F$ is MF.
\end{corollary}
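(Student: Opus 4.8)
The plan is to reduce the statement to Corollary \ref{cor:MFTrace}. That result already handles free-group crossed products once the coefficient algebra is separable, exact, satisfies the UCT, and carries a faithful, amenable trace; the gaps to bridge here are the absence of any separability, faithfulness, or UCT hypothesis. The locally type I assumption will supply exactness and the UCT (separable subalgebras of a locally type I algebra are type I, hence nuclear and in the bootstrap class), amenability of the relevant trace will come for free from nuclearity via Theorem 4.2.1 of \cite{Brown:Memoirs}, and faithfulness will be arranged by passing to a quotient. Since the MF condition for a trace constrains only finitely many elements at a time, it is a local property: a trace $\tau$ on a C*-algebra $D$ is MF if and only if its restriction to every separable C*-subalgebra of $D$ is MF (given a finite set and a tolerance, apply the hypothesis to the subalgebra they generate and extend the resulting linear, self-adjoint map to $D$ arbitrarily). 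So it suffices to show $\tau|_{D_0}$ is MF for each separable $D_0 \subseteq A \rtimes_r F$.

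First I would localize. Fix a separable $D_0$. Using the canonical conditional expectation $E : A \rtimes_r F \to A$ to read off Fourier coefficients, one produces a countable subgroup $F_0 \le F$ and a separable, $\alpha|_{F_0}$-invariant subalgebra $A_0 \subseteq A$ with $D_0 \subseteq A_0 \rtimes_r F_0 \subseteq A \rtimes_r F$. By Nielsen--Schreier $F_0$ is again free, and being a separable subalgebra of a locally type I algebra, $A_0$ is type I; in particular $A_0$ is nuclear, exact, and, being separable and in the bootstrap class, satisfies the UCT. It thus suffices to prove $\tau|_{A_0 \rtimes_r F_0}$ is MF.

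Now I would arrange faithfulness by quotienting out the trace-null ideal. Because $\tau$ is tracial and $\alpha_s(a) = u_s a u_s^*$ for the canonical unitaries, $\tau|_{A_0}$ is $\alpha$-invariant, so $N_0 := \{ a \in A_0 : \tau(a^*a) = 0 \}$ is an $\alpha$-invariant two-sided (by the trace identity) ideal of $A_0$. The quotient $A_0/N_0$ is again separable and type I, and the induced trace on it is faithful by construction and amenable by nuclearity. A Cauchy--Schwarz estimate together with $\tau(nn^*) = \tau(n^*n) = 0$ for $n \in N_0$ shows $\tau$ annihilates each $n u_s$, hence the whole ideal $N_0 \rtimes_r F_0$; since $F_0$ is exact, the sequence $0 \to N_0 \rtimes_r F_0 \to A_0 \rtimes_r F_0 \to (A_0/N_0) \rtimes_r F_0 \to 0$ is exact, so $\tau$ factors as $\bar{\tau} \circ \pi$ for a trace $\bar{\tau}$ on $(A_0/N_0) \rtimes_r F_0$ whose restriction to $A_0/N_0$ is faithful and amenable. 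Corollary \ref{cor:MFTrace} then applies to $A_0/N_0$ and $F_0$ and shows $\bar{\tau}$ is MF; pulling back the approximating net along $\pi$ shows $\tau|_{A_0 \rtimes_r F_0}$ is MF, completing the reduction. The step I expect to require the most care is this descent to $(A_0/N_0) \rtimes_r F_0$: one must confirm both that $\tau$ genuinely kills the ideal generated by $N_0$ in the reduced crossed product and that exactness of $F_0$ makes the quotient crossed product the correct target, so that Corollary \ref{cor:MFTrace} can be invoked with an honestly faithful restricted trace.
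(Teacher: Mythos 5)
Your overall strategy --- localize to a separable invariant subalgebra and a countable subgroup, kill the trace-null ideal using exactness of the free group, and then invoke Corollary \ref{cor:MFTrace} --- is exactly the route the paper takes (the paper compresses your entire localization step into the sentence ``we may assume $A$ is separable''). But one step of your justification is false as stated: separable C$^*$-subalgebras of a locally type I C$^*$-algebra need \emph{not} be type I. The CAR algebra $\mathbb{M}_{2^\infty}$ is a counterexample: it is locally type I (it is the closed union of an increasing sequence of finite-dimensional, hence type I, subalgebras), it is a separable subalgebra of itself, and it is not type I. More generally, any non-type-I AF algebra defeats your parenthetical claim, and inductive limits of type I algebras are precisely the kind of coefficient algebras the corollary is meant to cover, so this is not a fringe case. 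Consequently you cannot conclude that $A_0$, or its quotient $A_0/N_0$, is type I, nuclear, and in the bootstrap class by this reasoning. The repair is to enlarge $A_0$ to a separable, $\alpha|_{F_0}$-invariant subalgebra that is itself \emph{locally} type I --- possible because local type I-ness is separably inheritable (approximate a countable dense set by type I subalgebras, pass to separable type I subalgebras of those using that type I-ness is separably inheritable, and iterate) --- and then to quote Theorem 1.1 of \cite{Dadarlat:locallyUCT}, exactly as the paper does, to conclude that separable locally type I algebras are nuclear and satisfy the UCT; likewise quotients of locally type I algebras are again locally type I, which is what you need for $A_0/N_0$.

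A second, smaller point: your local criterion for MF-ness quietly assumes that the restriction of $\tau$ to a separable subalgebra is again a tracial \emph{state}, but the paper reserves the word ``trace'' for states, and both amenability and MF-ness are defined only for states. If $A_0$ fails to capture the mass of $\tau$ (for instance $A$ nonunital and $A_0$ supported where $\tau$ is small), then $\|\tau|_{A_0 \rtimes_r F_0}\| < 1$, the factored functional $\bar\tau$ is not a state, and Corollary \ref{cor:MFTrace} does not literally apply. This is easy to fix: since an approximate unit for $A$ is an approximate unit for $A \rtimes_r F$, one can choose positive norm-one elements $e_n \in A$ with $\tau(e_n) \rightarrow 1$ and include them in $A_0$, which forces $\tau|_{A_0 \rtimes_r F_0}$, and hence $\bar\tau$, to have norm one. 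With these two repairs your argument is correct and agrees with the paper's proof, while usefully spelling out the separable reduction and the descent to the quotient crossed product that the paper leaves implicit.
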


\begin{proof}
We may assume $A$ is separable.  Suppose $\tau$ is a trace on $A \rtimes_r F$.  Then $J := \{ a \in A : \tau(a^*a) \} = 0$ is an $\alpha$-invariant ideal of $A$ and $\tau$ vanishes on the ideal $J \rtimes_r F$ of $A \rtimes_r F$.  Hence, as $F$ is exact, $\tau$ factors through $(A / J) \rtimes_r F$.  As $A$ is locally type I, so is $A / J$.  After replacing $A$ with $A / J$, we may assume $\tau|_A$ is faithful.  By Theorem 1.1 in \cite{Dadarlat:locallyUCT}, locally type I algebras satisfy the UCT.  Since locally type I algebras are nuclear, the result follows from Corollary \ref{cor:MFTrace}.
\end{proof}

The following is a substantial generalization of a result of Kerr and Nowak (Theorem 5.2 in \cite{KerrNowak}) which gives the same statement in the case when $A$ is commutative.  An action of a group $G$ on a C$^*$-algebra $A$ is called \emph{minimal} if $A$ admits no non-trivial $G$-invariant ideals.

\begin{corollary}\label{cor:KerrNowak}
Suppose $A$ is a separable, unital, nuclear $C^*$-algebra satisfying the UCT and $F$ is a free group acting minimally on $A$.  Then the following are equivalent:
\begin{enumerate}
  \item $A \rtimes_r F$ is MF;
  \item $A \rtimes_r F$ is stably finite;
  \item $A$ admits an invariant trace.
\end{enumerate}
\end{corollary}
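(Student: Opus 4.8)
The plan is to prove the cyclic chain of implications $(1) \Rightarrow (2) \Rightarrow (3) \Rightarrow (1)$, with the last implication carrying essentially all of the content and the first two being soft.

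For $(1) \Rightarrow (2)$, I would simply recall that every MF algebra is stably finite: an MF algebra embeds into the generalized Calkin algebra $\prod_k \mathbb{M}_{n_k} / \bigoplus_k \mathbb{M}_{n_k}$, which is stably finite, so this is immediate from \cite{BlackadarKirchberg:MFAlgebras}. For $(2) \Rightarrow (3)$, the first point is that $A \rtimes_r F$ is exact: $A$ is nuclear and free groups are exact, so the reduced crossed product is exact. A unital, stably finite, exact C*-algebra carries a tracial state (a stably finite unital C*-algebra admits a quasitrace, and on exact algebras quasitraces are traces by Haagerup's theorem), so fix such a trace $\tau$ on $A \rtimes_r F$. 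Writing $u_s$ for the canonical implementing unitaries, traciality gives $\tau(\alpha_s(a)) = \tau(u_s a u_s^*) = \tau(a)$ for all $a \in A$ and $s \in F$, so $\tau|_A$ is an $F$-invariant trace on $A$, proving $(3)$.

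The heart of the argument is $(3) \Rightarrow (1)$. Let $\tau_A$ be an invariant trace on $A$. First I would upgrade it to a faithful trace using minimality: the tracial kernel $\{a \in A : \tau_A(a^*a) = 0\}$ is a closed, two-sided ideal, and its invariance under $\alpha$ follows from the invariance of $\tau_A$; since $\tau_A(1_A) = 1$ it is proper, so minimality of the action forces it to be $0$. Thus $\tau_A$ is faithful, and it is amenable because $A$ is nuclear. Now set $\tau := \tau_A \circ E$, where $E : A \rtimes_r F \to A$ is the canonical conditional expectation. Invariance of $\tau_A$ makes $\tau$ a trace, and $\tau|_A = \tau_A$ is faithful and amenable; moreover $\tau$ itself is faithful, since $E$ is faithful on the reduced crossed product and $\tau_A$ is faithful on $A$.

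With these hypotheses in place, Corollary \ref{cor:MFTrace} (applied to the nuclear, hence exact, UCT algebra $A$, the free group $F$, and the trace $\tau$) shows that $\tau$ is an MF trace on $A \rtimes_r F$. The final step, which I expect to demand the most care, is to pass from a \emph{faithful} MF trace to MF-ness of the whole algebra. The matricial maps $\varphi_i : A \rtimes_r F \to \mathbb{M}_{n(i)}$ witnessing that $\tau$ is MF may be taken contractive, so they assemble into a $*$-homomorphism $\Phi : A \rtimes_r F \to \prod_i \mathbb{M}_{n(i)} / \bigoplus_i \mathbb{M}_{n(i)}$; faithfulness of $\tau$ forces $\ker \Phi \subseteq \{x : \tau(x^*x) = 0\} = 0$, so $\Phi$ is injective and hence isometric. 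Therefore $A \rtimes_r F$ embeds into the generalized Calkin algebra, which by \cite{BlackadarKirchberg:MFAlgebras} is precisely the assertion that $A \rtimes_r F$ is MF.
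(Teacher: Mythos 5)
Your proposal is correct and follows essentially the same route as the paper: the same soft arguments for (1)$\Rightarrow$(2) and (2)$\Rightarrow$(3) (stable finiteness of MF algebras, then Haagerup's quasitraces-are-traces theorem plus exactness of $A \rtimes_r F$), and for (3)$\Rightarrow$(1) the same chain of minimality giving faithfulness of $\tau_A$, nuclearity giving amenability, and Corollary \ref{cor:MFTrace} applied to $\tau_A E$, whose faithfulness follows from that of $\tau_A$ and $E$. The only difference is that you spell out the final standard fact that a faithful MF trace forces the algebra itself to be MF (via the induced embedding into $\prod_i \mathbb{M}_{n(i)} / \bigoplus_i \mathbb{M}_{n(i)}$), which the paper simply asserts.
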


\begin{proof}
All MF C$^*$-algebras are stably finite, so (1) implies (2).  For (2) implies (3), if $A \rtimes_r F$ is stably finite, then since $A \rtimes_r F$ is unital and exact, $A \rtimes_r F$ admits a trace $\tau$ by Corollary 5.12 in \cite{Haagerup:Quasitrace}, and then $\tau|_A$ is an invariant trace on $A$.  Finally, for (3) implies (1), suppose $A$ admits an invariant trace $\tau_A$.  Since $\{ a \in A : \tau_A(a^*a) = 0 \}$ is an invariant ideal in $A$ and $\alpha$ is minimal, $\tau_A$ is faithful.  As $A$ is nuclear, $\tau_A$ is amenable, and hence $\tau_A E$ is MF by Corollary \ref{cor:MFTrace}.  Since $\tau_A$ and $E$ are faithful, $\tau_A E$ is faithful.  So $A \rtimes_r F$ has a faithful, MF trace whence is MF.
\end{proof}

Theorem D also allows for a description of closed unitary orbits of normal operators in simple, unital AF-algebras with a unique trace and divisible $K_0$-group in terms of spectral data.

\begin{corollary}
Suppose $B$ is a simple, unital AF-algebra with a unique trace and divisible $K_0$-group.  Two normal operators $a$ and $b$ in $B$ are approximately unitarily equivalent if, and only if, $\sigma(a) = \sigma(b)$, $\tau_B(f(a)) = \tau_B(f(b))$ for all $f \in C(\sigma(a))$, and for every compact, open set $U \subseteq \sigma(a)$, the projections $\chi_U(a)$ and $\chi_U(b)$ are unitarily equivalent.
\end{corollary}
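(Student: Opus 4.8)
The plan is to reduce the statement to the uniqueness part of Corollary \ref{cor:TheoremD} applied to the abelian C$^*$-algebra $A = C(X)$, where $X = \sigma(a)$. For a normal $a \in B$, the continuous functional calculus furnishes a faithful, unital $*$-homomorphism $\varphi_a : C(\sigma(a)) \to B$ determined by $\varphi_a(\mathrm{id}) = a$, where $\mathrm{id}$ is the coordinate function $z \mapsto z$; faithfulness holds because $C^*(a, 1_B) \cong C(\sigma(a))$ via functional calculus, and $\varphi_a$ is nuclear since $B$ is AF. Writing $\psi = \varphi_b$ in the case $\sigma(b) = \sigma(a) = X$, I would first observe that $a$ and $b$ are approximately unitarily equivalent if and only if $\varphi_a$ and $\varphi_b$ are: one direction is immediate upon evaluating at $\mathrm{id}$, and the converse follows from an $\varepsilon/3$ argument using that $\varphi_a, \varphi_b$ are contractive and that the polynomials in $\mathrm{id}$ and $\overline{\mathrm{id}}$ are dense in $C(X)$ by the Stone--Weierstrass theorem.

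For the ``only if'' direction, suppose $a = \lim_n u_n b u_n^*$ in norm. If $\lambda \notin \sigma(a)$, then $a - \lambda$ is invertible, so $u_n b u_n^* - \lambda$ is invertible for large $n$, whence $b - \lambda$ is invertible; thus $\sigma(b) \subseteq \sigma(a)$, and $\sigma(a) = \sigma(b)$ by symmetry. For any $f \in C(X)$, continuity of the functional calculus gives $f(a) = \lim_n u_n f(b) u_n^*$, so $\tau_B(f(a)) = \tau_B(f(b))$ by traciality and continuity of $\tau_B$, yielding the trace condition. Taking $f = \chi_U$ for a compact open $U \subseteq X$ shows $\chi_U(a) = \lim_n u_n \chi_U(b) u_n^*$, and since projections at norm-distance less than $1$ are unitarily equivalent, $\chi_U(a)$ and $\chi_U(b)$ are unitarily equivalent.

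For the ``if'' direction, I would verify the hypotheses of Corollary \ref{cor:TheoremD}(2) for $A = C(X)$: it is separable (as $X$ is a compact metric space), unital, nuclear (hence exact), and satisfies the UCT. The trace condition gives $\tau_B \varphi_a = \tau_B \varphi_b$ directly, so the remaining point is $K_0(\varphi_a) = K_0(\varphi_b)$. Each compact open (equivalently, clopen) $U \subseteq X$ gives a projection $\chi_U \in C(X)$ with $K_0(\varphi_a)([\chi_U]) = [\chi_U(a)]$, and the third hypothesis forces $[\chi_U(a)] = [\chi_U(b)]$ in $K_0(B)$; thus $K_0(\varphi_a)$ and $K_0(\varphi_b)$ agree on all the classes $[\chi_U]$. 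Since $X$ is a compact subset of $\mathbb{C}$, Alexander duality gives $\check{H}^2(X; \mathbb{Z}) = 0$, so the rank map identifies $K_0(C(X))$ with the group $C(X, \mathbb{Z})$ of continuous integer-valued functions, which is generated by the indicators of clopen sets; hence the classes $[\chi_U]$ generate $K_0(C(X))$ and $K_0(\varphi_a) = K_0(\varphi_b)$. Corollary \ref{cor:TheoremD}(2) then produces unitaries $u_n \in B$ with $\varphi_a = \lim_n \mathrm{ad}(u_n) \varphi_b$, and evaluating at $\mathrm{id}$ shows $a$ and $b$ are approximately unitarily equivalent.

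The main obstacle is the identification $K_0(C(X)) \cong C(X, \mathbb{Z})$ and the assurance that the invariants in the statement --- in particular the classes of the clopen projections $\chi_U(a)$ --- determine all of $K_0(\varphi_a)$. This is precisely where planarity of $\sigma(a)$ is essential, through the vanishing of $\check{H}^2$: for a general compact metric space $K_0(C(X))$ would also see contributions from $\check{H}^2(X;\mathbb{Z})$ (first Chern classes of line bundles), which the listed data do not detect, and the characterization would fail. I would also record that, since $B$ has stable rank one and hence cancellation of projections, unitary equivalence of projections in $B$ is equivalent to equality of their $K_0$-classes, so the third hypothesis is exactly captured by the $K_0$-condition and nothing is lost in passing between the two formulations.
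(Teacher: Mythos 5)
Your proposal is correct and follows essentially the same route as the paper: pass to the functional-calculus $*$-homomorphisms $C(X) \to B$, identify $K_0(C(X))$ with $C(X,\mathbb{Z})$ so that the classes $[\chi_U]$ of clopen indicators generate (using cancellation in $B$ to translate unitary equivalence of projections into equality in $K_0$), and invoke the uniqueness half of the classification theorem. The only differences are cosmetic: the paper cites Theorem \ref{thm:Classification} directly where you use Corollary \ref{cor:TheoremD}(2), and you supply details the paper leaves implicit (the ``only if'' direction and the Alexander-duality justification of $K_0(C(X)) \cong C(X,\mathbb{Z})$), both of which are accurate.
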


\begin{proof}
Let $X = \sigma(a) = \sigma(b)$ and let $\hat{a}$ and $\hat{b}$ be the $^*$-homomorphisms $C(X) \rightarrow B$ given by the functional calculus.  If $X = \sigma(a) = \sigma(b)$, then $K_0(C(X))$ is canonically isomorphic to $C(X, \mathbb{Z})$ and is generated as a group by the elements $[\chi_U]$ for compact, open sets $U \subseteq X$.  Hence $K_0(\hat{a}) = K_0(\hat{b})$ if, and only if, $\chi_U(a)$ and $\chi_U(b)$ are unitarily equivalent for each compact, open set $U \subseteq X$.  Now apply the uniqueness portion of Theorem \ref{thm:Classification} to the $^*$-homomorphisms $\hat{a}$ and $\hat{b}$.
\end{proof}

We end with an abstract characterization of AF-algebras among the class of simple, unital C$^*$-algebras with a unique trace and divisible $K_0$-group.  The following result is known and can be deduced from the classification of separable, simple, unital, nuclear, $\mathcal{Z}$-stable C$^*$-algebras which satisfy the UCT and have a unique trace.  The minimal route through the literature seems to be the quasidiagonality theorem of \cite{TWW:Annals}, the results of Matui-Sato to show $A$ is tracially AF \cite{MatuiSato:DecompositionRank}, and the classification of separable, simple, unital, nuclear, tracially AF-algebras satisfying the UCT due to Lin in \cite{Lin:TAFClassification}.  The latter result makes heavy use of deep classification and structural results for approximately homogeneous C$^*$-algebras.  It is worth emphasizing that the proof given here does not depend on tracial approximations of any kind and does not depend on any inductive limit structure beyond that for AF-algebras.

\begin{corollary}\label{cor:AFAlg}
Suppose $A$ is a simple, unital, $C^*$-algebra with a unique trace and divisible $K_0$-group.  Then $A$ is an AF-algebra if, and only if, $A$ is separable, nuclear, and $\mathcal{Q}$-stable, $A$ satisfies the UCT, and $K_1(A) = 0$.
\end{corollary}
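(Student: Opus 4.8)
The plan is to prove the non-trivial (``if'') implication by manufacturing a simple, unital AF-algebra $B$ whose invariant matches that of $A$ and then running a two-sided Elliott intertwining between $A$ and $B$, using the existence and uniqueness statements of Corollary \ref{cor:TheoremD} and Theorem \ref{thm:Classification}. The forward implication is routine: if $A$ is AF it is automatically separable, nuclear, satisfies the UCT, and has $K_1(A)=0$; and the $K$-theoretic argument already carried out in the proof of Corollary \ref{cor:TheoremD} (divisible and torsion-free $K_0$, combined with Elliott's classification of AF-algebras) shows $A\cong A\otimes\mathcal{Q}$, so $A$ is $\mathcal{Q}$-stable.

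For the converse, the first task is to produce the AF model. Since $A$ is $\mathcal{Q}$-stable and satisfies the UCT, the Künneth theorem gives $K_0(A)\cong K_0(A)\otimes_{\mathbb{Z}}\mathbb{Q}$, so $K_0(A)$ is torsion-free; together with divisibility this makes $K_0(A)$ a countable $\mathbb{Q}$-vector space. Let $\tau_A$ be the unique trace (faithful, since $A$ is simple and unital, and amenable, since $A$ is nuclear) and equip $K_0(A)$ with the cone $P=\{x:\hat{\tau}_A(x)>0\}\cup\{0\}$. One checks directly that $(K_0(A),P,[1_A])$ is an ordered group with order unit which is unperforated (immediate on a torsion-free group from $P=\{\hat{\tau}_A>0\}\cup\{0\}$), has Riesz interpolation, and is simple (every nonzero element of $P$ is an order unit because $\hat{\tau}_A$ is strictly positive on it); hence it is a simple dimension group. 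Moreover it has a \emph{unique} state: if $g$ is a state with $g([1_A])=1$, then for $r\in\mathbb{Q}$ the element $x-r[1_A]$ lies in $P$ whenever $\hat{\tau}_A(x)>r$ and in $-P$ whenever $\hat{\tau}_A(x)<r$, which forces $g(x)=\hat{\tau}_A(x)$. By the Effros--Handelman--Shen realization theorem and Elliott's classification, there is a simple, unital AF-algebra $B$ together with an isomorphism of ordered groups with order unit identifying $(K_0(B),K_0(B)_+,[1_B])$ with $(K_0(A),P,[1_A])$; write $\sigma\colon K_0(A)\to K_0(B)$ for the resulting group isomorphism. Then $B$ has divisible $K_0$ and a unique trace $\tau_B$ (corresponding to the unique state above), and $\sigma([1_A])=[1_B]$ and $\hat{\tau}_B\sigma=\hat{\tau}_A$.

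The second task is the intertwining. Applying the existence statement of Corollary \ref{cor:TheoremD} with domain $A$ and codomain $B$ produces a unital, faithful, nuclear $\varphi\colon A\to B$ with $K_0(\varphi)=\sigma$ and $\tau_B\varphi=\tau_A$. Symmetrically, $\sigma^{-1}\colon K_0(B)\to K_0(A)$ satisfies $\sigma^{-1}([1_B])=[1_A]$ and $\hat{\tau}_A\sigma^{-1}=\hat{\tau}_B$, so the existence statement of Theorem \ref{thm:Classification}, with domain $B$ (separable, unital, exact, UCT, with faithful amenable trace) and codomain $A$ (simple, unital, $\mathcal{Q}$-stable, unique trace, every quasitrace a trace since $A$ is exact, and $K_1(A)=0$), produces a unital, faithful, nuclear $\psi\colon B\to A$ with $K_0(\psi)=\sigma^{-1}$ and $\tau_A\psi=\tau_B$. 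Now $\psi\varphi\colon A\to A$ is unital, faithful, nuclear with $K_0(\psi\varphi)=\mathrm{id}$ and $\tau_A(\psi\varphi)=\tau_A$, so by the uniqueness statement of Theorem \ref{thm:Classification} it is approximately unitarily equivalent to $\mathrm{id}_A$; likewise $\varphi\psi$ is approximately unitarily equivalent to $\mathrm{id}_B$. Elliott's two-sided approximate intertwining then yields an isomorphism $A\cong B$, so $A$ is AF.

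The genuinely new content is concentrated in the realization step: I expect the main obstacle to be verifying that $(K_0(A),P,[1_A])$ is a simple dimension group with a unique state, so that a suitable AF model $B$ with unique trace and divisible $K_0$ exists. It is worth emphasizing that one must invoke the \emph{general} classification result, Theorem \ref{thm:Classification}, for the map $\psi\colon B\to A$ and for the relation $\psi\varphi\sim\mathrm{id}_A$: Corollary \ref{cor:TheoremD} alone does not apply there, since at that point $A$ is not yet known to be AF and is available only as a simple, unital, $\mathcal{Q}$-stable codomain. A pleasant feature of this route is that the actual order on $K_0(A)$ is never needed---only the trace-defined order $P$---so no strict-comparison or tracial-approximation input for $A$ enters the argument.
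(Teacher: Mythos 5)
Your proof is correct, and its second half coincides with the paper's: both arguments produce $\varphi\colon A\to B$ and $\psi\colon B\to A$ from the existence part of Theorem \ref{thm:Classification} (resp.\ Corollary \ref{cor:TheoremD}), deduce $\psi\varphi\sim_u\mathrm{id}_A$ and $\varphi\psi\sim_u\mathrm{id}_B$ from the uniqueness part, and conclude with Elliott's two-sided approximate intertwining (Corollary 2.3.4 of \cite{Rordam:YellowBook}). Where you genuinely diverge is the realization of the AF model $B$. The paper realizes the \emph{genuine} ordered group $(K_0(A),K_0^+(A),[1_A])$ via Effros--Handelman--Shen \cite{EffrosHandelmanShen}, and to verify the dimension-group axioms it leans on C*-structure theory of $A$: stable finiteness to get an ordered group, R{\o}rdam's theorems \cite{Rordam:UHFStable2}, \cite{Rordam:UHFStable} for real rank zero and stable rank one, cancellation via \cite{Blackadar:KTheory}, Riesz interpolation via \cite{Zhang:RieszInterpolation}, and unperforation from $\mathcal{Q}$-stability. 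You instead realize the \emph{trace-defined} order $P=\{\hat\tau_A>0\}\cup\{0\}$ on the $\mathbb{Q}$-vector space $K_0(A)$ (torsion-freeness coming from $\mathcal{Q}$-stability plus divisibility), for which directedness, unperforation, simplicity, interpolation (insert a rational multiple of $[1_A]$ strictly between the relevant trace values, taking $z$ to be a coinciding endpoint in the degenerate equality cases), and uniqueness of the state are elementary order-theoretic checks; no structure theory of $A$ enters at all, exactly as you note. The trade-off is that your $\sigma$ is a priori only a group isomorphism compatible with unit and trace, not an order isomorphism for the genuine orders, so --- as you correctly emphasize --- both $\psi$ and the relation $\psi\varphi\sim_u\mathrm{id}_A$ must be obtained from Theorem \ref{thm:Classification}, whose hypotheses require no positivity; the paper, by contrast, retains the option of quoting classification of *-homomorphisms out of AF-algebras for $\psi$, which does need positivity of $\sigma^{-1}$ and is unavailable on your route (the paper in any case also offers Theorem \ref{thm:Classification} as the alternative). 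Both routes are valid, and a posteriori they agree: once $A\cong B$ is established, the genuine order on $K_0(A)$ is identified with your trace order.
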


\begin{proof}
It is well known that AF-algebras are separable, nuclear, and satisfy the UCT.  As $K_0(A)$ is divisible, $K_0(A \otimes \mathcal{Q}) \cong K_0(A)$ and hence $A$ is $\mathcal{Q}$-stable by the classification of AF-algebras.

Conversely, suppose $A$ is a separable, simple, unital, nuclear, $\mathcal{Q}$-stable C$^*$-algebra with a unique trace, $A$ satisfies the UCT, and $K_1(A) = 0$.  As $A$ is simple, the trace on $A$ is necessarily faithful, so $A$ is stably finite.  Hence $(K_0(A), K_0^+(A), [1_A])$ is an ordered group by Proposition 6.3.3 in \cite{Blackadar:KTheory}.

As $A$ is $\mathcal{Q}$-stable and has a unique trace, $A$ has real rank zero by Theorem 7.2 of \cite{Rordam:UHFStable2}.  Also, $A$ has stable rank one by Corollary 6.6 in \cite{Rordam:UHFStable} and, therefore, has cancellation of projections by Proposition 6.5.1 of \cite{Blackadar:KTheory}.  Therefore, $K_0(A)$ has Riesz interpolation by Corollary 1.6 in \cite{Zhang:RieszInterpolation}.  Also, as $A$ is $\mathcal{Q}$-stable, $K_0(A)$ is unperforated.  By the Effros-Handelman-Shen Theorem \cite{EffrosHandelmanShen}, there is a unital AF-algebra $B$ with $K_0(A) \cong K_0(B)$ as ordered groups.  Necessarily, $B$ is simple and has a unique trace.  Let $\sigma : K_0(A) \rightarrow K_0(B)$ be a unital order isomorphism.  As $K_0(B)$ has unique state, we have $\hat{\tau}_B \sigma = \hat{\tau}_A$.  Applying Theorem \ref{thm:Classification}, there is a unital, nuclear $^*$-homomorphism $\varphi : A \rightarrow B$ with $K_0(\varphi) = \sigma$ and $\tau_B \varphi = \tau_A$.

As $B$ is an AF-algebra and $A$ has stable rank one, there is a $^*$-homomorphism $\psi : B \rightarrow A$ such that $K_0(\psi) = \sigma^{-1}$ and $\tau_A \psi = \tau_B$ by the classification of $^*$-homomorphisms out of AF-algebras (or by Theorem \ref{thm:Classification}).  Using Theorem \ref{thm:Classification}, $\psi \varphi$ is approximately unitarily equivalent to the identity on $A$ and $\varphi \psi$ is approximately unitarily equivalent to the identity on $B$.  By Elliott's intertwining argument (see Corollary 2.3.4 of \cite{Rordam:YellowBook}), $A \cong B$.
\end{proof}

\end{document}